\newcommand{\cf}{{cf.}\ }
\newcommand{\eg}{{e.g.}\ }
\newcommand{\ko}{\: , \;}
\newcommand{\ul}[1]{\underline{#1}}
\newcommand{\ol}[1]{\overline{#1}}
\numberwithin{equation}{section}
\newtheorem{classification-theorem}[subsection]{Classification Theorem}
\newtheorem{decomposition-theorem}[subsection]{Decomposition Theorem}
\newtheorem{proposition-definition}[subsection]{Proposition-Definition}
\newtheorem{periodicity-conjecture}[subsection]{Periodicity Conjecture}
\newtheorem{theorem}{Theorem}
\numberwithin{theorem}{section}
\newtheorem{thmx}{Theorem}
\newtheorem{lemma}[theorem]{Lemma}
\newtheorem{proposition}[theorem]{Proposition}
\newtheorem{propx}[thmx]{Proposition}
\newtheorem{corollary}[theorem]{Corollary}
\newtheorem{corx}[thmx]{Corollary}
\theoremstyle{definition}
\newtheorem{definition}[theorem]{Definition}
\theoremstyle{plain}
\theoremstyle{definition}
\newtheorem{example}[theorem]{Example}
\theoremstyle{plain}
\newtheorem{remark}[theorem]{Remark}
\newtheorem{assumption}[theorem]{Assumption}
\newtheorem{notation}[theorem]{Notation}
\newcommand{\reminder}[1]{}
\renewcommand{\mod}{\mathrm{mod}\,}
\newcommand{\CM}{\mathrm{CM}\,}
\newcommand{\per}{\mathrm{per}\,}
\newcommand{\add}{\mathrm{add}\,}
\newcommand{\op}{^{op}}
\newcommand{\Z}{\mathbb{Z}}
\newcommand{\N}{\mathbb{N}}
\newcommand{\Q}{\mathbb{Q}}
\newcommand{\iso}{\xrightarrow{_\sim}}
\newcommand{\id}{\mathbf{1}}
\newcommand{\Hom}{\mathrm{Hom}}
\newcommand{\RHom}{\mathrm{RHom}}
\newcommand{\Ext}{\mathrm{Ext}}
\newcommand{\End}{\mathrm{End}}
\newcommand{\rad}{\mathrm{rad}}
\newcommand{\ten}{\otimes}
\newcommand{\lten}{\overset{\mathrm{L}}{\ten}}
\newcommand{\cd}{{\mathcal D}}
\newcommand{\ct}{{\mathcal T}}
\newcommand{\Ga}{\Gamma}
\newcommand{\Si}{\Sigma}
\newcommand{\si}{\sigma}
\renewcommand{\phi}{\varphi}
\newcommand{\sequal}{\!\!\!=\!\!\!}
\newcommand{\sminus}{\!\!\!-\!\!\!}
\newcommand{\bigbullet}{\mathrel{\raisebox{-0.3ex}{\scalebox{1.2}{$\bullet$}}}}
\renewcommand{\bigcirc}{\mathrel{\raisebox{-0.3ex}{\scalebox{1.2}{$\circ$}}}}
\newcommand{\smallodot}{\mathrel{{\scalebox{0.7}{$\odot$}}}}
\newcommand{\smallcircledast}{\mathrel{{\scalebox{0.7}{$\circledast$}}}}
\renewcommand{\tilde}[1]{\widetilde{#1}}
\begin{document}

\date{\today}

\title[Tilting theory for hypersurface singularities of dimension one]{Tilting theory for\\[0.15cm]
hypersurface singularities of dimension one}

\author{Osamu Iyama}
\address{Graduate School of Mathematical Sciences, The University of Tokyo, 3-8-1 Komaba \linebreak Meguro-ku Tokyo 153-8914, Japan}
\email{iyama@ms.u-tokyo.ac.jp}
\urladdr{https://www.ms.u-tokyo.ac.jp/~iyama/}

\author{Junyang Liu}
\address{School of Mathematical Sciences, University of Science and Technology of China, Hefei 230026, China}
\address{Graduate School of Mathematical Sciences, The University of Tokyo, 3-8-1 Komaba \linebreak Meguro-ku Tokyo 153-8914, Japan}
\email{liuj@imj-prg.fr}
\email{liu@ms.u-tokyo.ac.jp}
\urladdr{https://webusers.imj-prg.fr/~junyang.liu}

\begin{abstract}
Any $\N$-graded commutative Gorenstein ring $R$ of Krull dimension one with $R_0$ a field admits a standard silting object $V$ in the stable category $\ul{\CM \!}_0^\Z R$, and the object $V$ is tilting if and only if the $a$-invariant $a$ is non-negative, as shown by Buchweitz, the first author, and Yamaura. In this article, under the additional assumption that $R$ is a hypersurface singularity, we prove that endomorphism algebra of $V$ is Iwanaga--Gorenstein of self-injective dimension at most $2$, and we give its explicit presentation in terms of a quiver with relation. In the case of where $a$ is negative, we prove that the dg endomorphism algebra of $V$ is Gorenstein, and we give its explicit presentation in terms of a dg path algebra. We explain our results by several examples including numerical semigroup algebras generated by two elements. Moreover, for each finite and countable Cohen--Macaulay representation type, we include the Auslander--Reiten quiver of the category $\CM\!_0^\Z R$ with the position of the standard silting object. As a step of the proof of our results, we give a characterization of Gorensteinness of homologically finite dg algebras in terms of Serre functors.
\end{abstract}

\keywords{singularity category,  Cohen--Macaulay module, hypersurface, tilting theory, $a$-invariant, dg algebra, numerical semigroup algebra}

\subjclass[2020]{18G80, 14F08, 13C14, 16E10, 13F70}

%\dedicatory{Dedicated to the memory of Ragnar-Olaf Buchweitz.}

\maketitle

\vspace*{-1cm}
\tableofcontents

\section{Introduction}

The study of maximal Cohen--Macaulay modules is one of the central subjects in commutative algebra and representation theory, \cf \cite{Auslander78, CurtisReiner81, Yoshino90, Simson92, LeuschkeWiegand12}. It has a strong connection with algebraic geometry, singularity theory, mathematical physics and a number of important results are obtained by applying tilting theory for singularity categories of commutative and non-commutative Gorenstein rings, \cf~\cite{KajiuraSaitoTakahashi09, FutakiUeda11, KussinLenzingMeltzer13, HerschendIyamaMinamotoOppermann23, MoriUeyama16, LuZhu21, BuchweitzIyamaYamaura20, Iyama18, HiranoOuchi23, HaniharaIyama22, KimuraMinamotoYamaura25, IyamaKimuraUeyama24}. For example, a milestone of Cohen--Macaulay representation theory is Buchweitz--Greuel--Schreyer's classification of commutative complete local Gorenstein rings of finite Cohen--Macaulay representation type as $ADE$ singularities, see \cite{Yoshino90, LeuschkeWiegand12}. This result gives a commutative counter part of famous Gabriel's classification of quivers of finite representation type as Dynkin quivers, and the connection of these two classifications is given by tilting theory, see~\eg \cite{GeigleLenzing91, KajiuraSaitoTakahashi07, BuchweitzIyamaYamaura20} and Section~\ref{section:Auslander--Reiten quivers} below.

For a commutative Gorenstein ring $R$, the category $\CM R$ of maximal Cohen--Macaulay modules is a Frobenius category. Its stable category is triangle equivalent to the singularity category associated with $R$ introduced by Buchweitz \cite{Buchweitz21} and Orlov \cite{Orlov04}. The category $\CM R$ has a subcategory $\CM\!_0 R$ which behaves much nicer than $\CM R$ since it enjoys Auslander--Reiten--Serre duality. For a $\Z$-graded commutative Gorenstein ring $R$, we consider the $\Z$-graded version $\CM\!_0 ^{\Z} R$. It is also a Frobenius category. When $R$ is $\N$-graded and of Krull dimension one such that $R_0$ is a field, Buchweitz--Iyama--Yamaura proved \cite{BuchweitzIyamaYamaura20} that the stable category $\ul{\CM \!}_0^\Z R$ admits a silting object (which we call the {\it standard silting object}) and it is a tilting object if and only if either $R$ is regular or the $a$-invariant of $R$ is non-negative.

It is important to study the dg(=differential graded) endomorphism algebra of the standard silting object because it controls the triangle structure of the stable category $\ul{\CM \!}_0^\Z R$. Hypersurface singularities $R=k[x, y]/(f)$ of Krull dimension one provide an important class of $\N$-graded commutative Gorenstein rings above.

\begin{assumption} \label{ass:setting}
Let $k$ be a field and $k[x, y]$ a $\Z$-graded polynomial algebra such that $x$ and $y$ are of positive degrees $m$ respectively $n$. Let $f$ be a homogeneous element of $k[x, y]$ and denote $R=k[x, y]/(f)$.
\end{assumption}

In this case, we prove the following homological property of the endomorphism algebra and describe the algebra in terms of an explicit quiver with relation.

\begin{thmx}[see Theorem~\ref{thm:non-negative} for details] \label{thm:A}
Under Assumption~\ref{ass:setting}, suppose that we have $\deg f\geq m+n$. Denote the endomorphism algebra of the standard tilting object $V$ in $\ul{\CM \!}_0^\Z R$ by $\Ga$.
\begin{itemize}
\item[a)] The algebra $\Ga$ is finite-dimensional and Iwanaga--Gorenstein of self-injective dimension less than or equal to $2$. Moreover, its global dimension is finite if and only if it is less than or equal to $2$ if and only if $R$ is reduced.
\item[b)] The algebra $\Ga$ admits a presentation by a quiver with relation given in Definition~\ref{def:quivers with relations}.
\end{itemize}
\end{thmx}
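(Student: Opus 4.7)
The plan is to extend the analysis of \cite{BuchweitzIyamaYamaura20}, which handles the case $m=n=1$ with $k$ algebraically closed, to arbitrary positive degrees $m, n$ and arbitrary base fields. Set $d = \deg f$, so that the $a$-invariant is $a = d - m - n \geq 0$.

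First, I would make both the standard tilting object $V$ and the algebra $\Ga = \End(V)$ explicit. The object $V$ is a finite direct sum of graded shifts of $R$ in $\ul{\CM \!}_0^\Z R$, indexed by a finite subset $I \subset \Z$ determined by $(m, n, d)$. The Hom-spaces between summands are computed from graded components of $R$ modulo morphisms factoring through projectives, which in the present setting reduces to a concrete description in terms of monomials in $x, y$ of bounded degrees. This identifies $\Ga$ with the path algebra of a grid-like quiver whose arrows correspond to multiplication by $x$ (of degree $m$) and $y$ (of degree $n$), modulo the commutation relations $xy = yx$ together with additional long relations coming from $f = 0$. Properly laying out the vertex set, the arrows and the form of the $f$-relations furnishes part (b) and supplies the bookkeeping needed for part (a).

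Second, the finite-dimensionality of $\Ga$ is immediate from the above description, since $I$ is finite and each Hom-space is spanned by finitely many monomials. For Iwanaga--Gorensteinness with self-injective dimension at most $2$, I would invoke the Serre functor on $\ul{\CM \!}_0^\Z R$ described in \cite{BuchweitzIyamaYamaura20} in terms of $a$ and the grading shift. Under the tilting equivalence $\per \Ga \simeq \ul{\CM \!}_0^\Z R$, this Serre functor corresponds to $- \lten_\Ga D\Ga$ on $\per \Ga$. The computation to carry out is to express $\nu V$ as an iterated cone of shifts of direct summands of $V$ by $0, -1, -2$; this produces a projective resolution of $D\Ga$ as a right $\Ga$-module of length at most $2$. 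A symmetric computation on the other side yields the two-sided Iwanaga--Gorenstein property.

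Third, for the equivalences in part (a): being Iwanaga--Gorenstein with self-injective dimension at most $2$ forces the global dimension of $\Ga$ to be either at most $2$ or infinite. To characterize finiteness by reducedness of $R$, note first that $R$ is reduced if and only if $f$ is squarefree. If $R$ is reduced, then using the explicit quiver presentation one builds projective resolutions of the simple $\Ga$-modules of length at most $2$, with squarefreeness of $f$ ensuring termination. Conversely, a repeated factor of $f$ produces a simple $\Ga$-module (attached to a specific vertex of the quiver) whose minimal projective resolution is infinite; this is detected combinatorially on the quiver relations. The principal difficulty will lie in pinning down the precise form of the $f$-relations in the quiver presentation: the equation $f = 0$ must be translated into a complete set of relations among paths, and ensuring no further relations are needed requires a careful Hom-space dimension count matching the quiver-with-relations algebra against $\End(V)$. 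A secondary technical point concerns non-algebraically-closed base fields: the quiver and relations are defined intrinsically over $k$ and the conclusions remain valid, but the reducedness argument must be phrased using characterizations stable under field extension.
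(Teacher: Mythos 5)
Your overall plan (explicit graded Hom-spaces, a quiver presentation read off from $\Ga/\rad\Ga$ and $\rad\Ga/\rad^2\Ga$, and global dimension decided by quiver combinatorics with reducedness of $R$ the governing condition) matches the paper's. The one place you take a genuinely different route is the Iwanaga--Gorenstein bound: you propose to compute $\nu V$ via Serre duality, express it as an iterated cone of shifted copies of $V$, and thereby read off a length-$2$ projective resolution of $D\Ga$. The paper never computes $\nu V$ at all; it works directly in $\mod\Ga\op$, first bounding $\mathrm{pd}\,S_i\le 2$ for the interior simples $1\le i\le a$ by applying $\Hom_R(?,V)_0$ to the Koszul exact sequence $0\to R(i)_{\ge 0}\to R(i+m)_{\ge 0}\oplus R(i+n)_{\ge 0}\to R(i+m+n)_{\ge 0}\to 0$ (with Lemma~\ref{lem:additive generators} guaranteeing the last term is in $\add V$), and then bounding $\mathrm{pd}$ of every indecomposable injective $I_v$ by a filtration argument: for $v=(i,x)$ one has $0\to M\to I_{(i,x)}\to N\to 0$ with $N$ filtered by interior simples and $M$ the image of an injective $\End_R(V^x)_0\op$-module, which is projective because $\End_R(V^x)_0$ is a self-injective Nakayama algebra and stays projective in $\mod\Ga\op$. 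Your Serre-functor route is plausible in principle --- Proposition~\ref{prop:Serre duality} does identify $?\lten_\Ga D\Ga$ as the Serre functor on $\per\Ga$ --- but the Serre functor on $\ul{\CM\!}_0^\Z R$ in Krull dimension one is a pure grading shift, so actually resolving $\nu V$ by objects in $\add V$ forces you back through the same Koszul triangle and is not obviously less work than the paper's module-level computation; also the subquotients of the brutal filtration of a length-$2$ projective resolution sit at shifts $0,1,2$, not $0,-1,-2$, so mind the sign convention. Your assessment that pinning down the complete relation set in part~(b) is the crux is correct; you should also engage with the fact that the relation $I_{r,s}$ genuinely depends on the choice of $(r,s)$ (Remark~\ref{rk:well-defined}), so some care is needed about what ``the'' presentation means.
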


This is a significant generalization of a result in \cite{BuchweitzIyamaYamaura20} for the case of standard grading $\deg x=\deg y=1$. Explicitly, we simultaneously generalize this setup in two directions: From the standard grading to arbitrary positive grading and from algebraically closed fields to arbitrary fields. The generality of Theorem~\ref{thm:A} for arbitrary grading is important because it includes many notable examples, \eg the $ADE$ singularities, the $T_{pq}$ singularities and the numerical semigroup algebras generated by two elements as follows.

\begin{corx}[=Corollary~\ref{cor:numerical semigroup algebra}] \label{cor:B}
Let $S\subset \N$ be a cofinite proper submonoid which is generated by two elements. Let $k[S]\subset k[x]$ be the numerical semigroup algebra. Then the global dimension of the endomorphism algebra $\Ga_S$ of the standard tilting object $V_S$ in $\ul{\CM \!}^\Z k[S]$ is less than or equal to $2$.
\end{corx}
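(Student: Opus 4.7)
The plan is to realize $k[S]$ as a graded hypersurface satisfying the hypotheses of Theorem~\ref{thm:A} and deduce the statement from part~(a) of that theorem.

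Write $S = \N a + \N b$ with $a \neq b$. Since $S$ is cofinite in $\N$, one has $\gcd(a, b) = 1$, and since $S$ is a proper submonoid (so $1 \notin S$), both $a$ and $b$ are at least $2$. I would consider the $k$-algebra homomorphism $\phi \colon k[x, y] \to k[t]$ sending $x \mapsto t^a$ and $y \mapsto t^b$. Its image is $k[t^a, t^b] = k[S]$, and a standard monomial argument (valid because $\gcd(a, b) = 1$) identifies $\ker \phi$ with the principal ideal generated by $f := x^b - y^a$. Grading $k[x, y]$ by $\deg x = a$ and $\deg y = b$ promotes this to a $\Z$-graded isomorphism $k[S] \cong k[x, y]/(f)$ with $\deg f = ab$, placing $k[S]$ inside the framework of Assumption~\ref{ass:setting} with $m = a$ and $n = b$. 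The inequality $\deg f \geq m + n$ required by Theorem~\ref{thm:A} then becomes $(a-1)(b-1) \geq 1$, which holds since $a, b \geq 2$.

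Next, I would verify the remaining ingredients. Because $\gcd(a, b) = 1$, the polynomial $f = x^b - y^a$ is irreducible in $k[x, y]$, so $R = k[S]$ is a domain and in particular reduced. Moreover, $R$ is regular away from the irrelevant maximal ideal, so every graded maximal Cohen--Macaulay module is locally free on the punctured spectrum, giving $\CM \!^\Z k[S] = \CM \!_0^\Z k[S]$ and a fortiori $\ul{\CM \!}^\Z k[S] = \ul{\CM \!}_0^\Z k[S]$. Under this identification, the standard tilting object $V_S$ of the former is the standard tilting object $V$ supplied by Theorem~\ref{thm:A}. Part~(a) of that theorem, applied in the reduced case, then forces the global dimension of $\Ga_S$ to be finite, and hence at most $2$.

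No serious obstacle is expected: the corollary is an unwinding of Theorem~\ref{thm:A}. The only points that need a few lines are the computation $\ker \phi = (x^b - y^a)$ and the equality $\CM \!^\Z k[S] = \CM \!_0^\Z k[S]$, both of which follow from short classical arguments.
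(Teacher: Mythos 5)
Your proof is correct and follows essentially the same route as the paper: realize $k[S]$ as the graded hypersurface $k[x,y]/(x^b - y^a)$ with $\deg x = a$, $\deg y = b$, check that the $a$-invariant is non-negative, identify $V_S$ with the standard tilting object $V$ of Section~\ref{ss:structure theorem for non-negative a-invariants}, and invoke the reduced case of Theorem~\ref{thm:A}(a). You supply more of the routine verifications than the paper does (the presentation $\ker\phi = (x^b - y^a)$, irreducibility, and the identification $\CM\!^\Z k[S] = \CM\!_0^\Z k[S]$), but the structure of the argument is the same.
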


The condition $\deg f\geq m+n$ in Theorem~\ref{thm:A} is satisfied except for the case where $R$ is regular or $f$ equals $x^{n_x}$ with $n_x>1$, \cf Lemma~\ref{lem:negative}. For the latter case, the dg endomorphism algebra of the standard silting object $V$ is described in Theorem~\ref{thm:C}.

\begin{thmx}[see Theorem~\ref{thm:negative} for details] \label{thm:C}
Under Assumption~\ref{ass:setting}, suppose that we have $0<\deg f<m+n$ and that $R$ is not regular. Denote the dg endomorphism algebra of the standard silting object $V$ in $\ul{\CM \!}_0^\Z R$ by $\Ga$.
\begin{itemize}
\item[a)] The dg algebra $\Ga$ is proper and Gorenstein (see Section~\ref{section:homologically finite dg algebras} for the definitions).
\item[b)] The dg algebra $\Ga$ admits a presentation by a dg path algebra given in Definition~\ref{def:dg path algebras}.
\end{itemize}
\end{thmx}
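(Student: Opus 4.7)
The plan is to leverage Lemma~\ref{lem:negative}: under the hypotheses $0<\deg f<m+n$ and $R$ non-regular, $f$ is (up to swapping $x$ and $y$) a pure power $x^{n_x}$ with $n_x>1$. This reduces the whole analysis to the concrete ring $R=k[x,y]/(x^{n_x})$ with $\deg x=m$ and $\deg y=n$. I would then (i) recall from Buchweitz--Iyama--Yamaura \cite{BuchweitzIyamaYamaura20} the explicit description of the standard silting object $V\in\ul{\CM\!}_0^\Z R$ as a direct sum of graded shifts of syzygies of simples in the relevant degree range, (ii) compute the graded morphism and extension spaces between these summands, and (iii) package the result, together with an $A_\infty$-lift of the composition, into a dg path algebra matching Definition~\ref{def:dg path algebras}.

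For part (b), the core is a direct calculation in $\ul{\CM\!}_0^\Z R$: since $R=k[x,y]/(x^{n_x})$ is a hypersurface, the indecomposable graded maximal Cohen--Macaulay modules are classified by matrix factorizations of $x^{n_x}$, and the arrows between the summands of $V$ correspond in an obvious way to multiplication by $x$ and $y$, suitably twisted by the internal grading. Morphisms that would force to vanish in the stable category reappear as $\Ext^1$-classes; these account for the arrows placed in non-zero homological degree in Definition~\ref{def:dg path algebras}, and produce the non-trivial differentials witnessing the relations. I would verify the claimed presentation by exhibiting explicit cycles and coboundaries using minimal graded projective resolutions.

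For part (a), properness is immediate once the presentation is established: a dg path algebra over a field with finitely many vertices and finite-dimensional generator spaces concentrated in a bounded range of degrees has bounded, finite-dimensional total cohomology. Gorensteinness I would deduce from the general criterion proved in Section~\ref{section:homologically finite dg algebras}, which characterizes Gorensteinness of proper dg algebras via the existence of a Serre functor on $\per\Ga$. Since $\ul{\CM\!}_0^\Z R$ carries Auslander--Reiten--Serre duality (because $R$ is $\N$-graded Gorenstein of Krull dimension one with $R_0=k$, see \cite{BuchweitzIyamaYamaura20}) and since $\per\Ga\simeq\ul{\CM\!}_0^\Z R$ via the silting equivalence, the Serre functor on the right-hand side transports to $\per\Ga$ and the criterion applies.

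The main obstacle will be the dg-theoretic content of part (b): rather than computing $\Ext$-groups as vector spaces, one must pin down the $A_\infty$-structure on the $\Ext$-algebra and show it is quasi-isomorphic to the strict dg algebra of Definition~\ref{def:dg path algebras}. Concretely, one has to keep track of the grading shifts coming from $m$, $n$ and $\deg f$, track signs carefully, and control all higher Massey products by comparing with explicit minimal projective resolutions of the summands of $V$ over $R$. Once this bookkeeping is under control, both parts should follow cleanly, with the Gorenstein criterion serving as the conceptual bridge from the geometric Serre duality on $\ul{\CM\!}_0^\Z R$ to the algebraic Gorenstein property of $\Ga$.
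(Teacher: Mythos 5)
Your argument for properness in part~(a) has a genuine gap. You claim properness is ``immediate once the presentation is established'' because a dg path algebra with ``finite-dimensional generator spaces concentrated in a bounded range of degrees has bounded, finite-dimensional total cohomology.'' But the dg path algebra $(kQ',d)$ of Definition~\ref{def:dg path algebras} has arrows $\beta_{p,i}$ for \emph{every} $p>0$, of cohomological degree $1-p$; the generating space is not bounded and the underlying graded algebra is infinite-dimensional in all degrees $\leq 0$. So the premise fails, and one cannot read properness off the presentation. The paper's actual argument (Proposition~\ref{prop:properness and Gorensteinness}) is independent of the presentation: it uses that $\ct$ is $\Hom$-finite to get homological finiteness, that $\Hom_{\ct}(Y,Y(p))$ vanishes for $|p|\gg 0$ by Proposition~4.6 of \cite{BuchweitzIyamaYamaura20}, and then the Eisenbud periodicity $\Si^2\simeq(\deg f)$ for the hypersurface $R$ to convert that vanishing of graded $\Hom$'s into vanishing of $\Hom_{\ct}(Y,\Si^{2p}Y)$ for $|p|\gg 0$, hence boundedness of the homology. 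You need this periodicity input; nothing in your sketch replaces it. Your route to Gorensteinness via the Serre functor criterion and Auslander--Reiten--Serre duality on $\ul{\CM\!}_0^\Z R$ does coincide with the paper's.

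For part~(b) your high-level plan (compute graded $\Hom$ and $\Ext$ between summands of $V$, lift to an $A_\infty$-structure, match Definition~\ref{def:dg path algebras}) is plausible but bypasses the structural shortcut the paper relies on and conflates two different kinds of models. The paper first applies part~(b) of Theorem~1.6 of \cite{BuchweitzIyamaYamaura20} to write $\Ga$ as a dg quotient of $\End_{\ct}\bigl(\bigoplus_{i=1}^n V^i\bigr)$; then it observes (Lemma~\ref{lem:morphism}, Lemma~\ref{lem:stable endomorphism}) that the latter is literally the self-injective Nakayama algebra $N_{n,n_x}$; and it produces a cofibrant dg replacement of $N_{n,n_x}$ via Koszul duality for monomial algebras (Tamaroff, Dotsenko--Vallette, He--Lu), which is exactly $(kQ,d)$, so that the quotient becomes $(kQ',d)$. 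Note also that $(kQ',d)$ is a cobar-type free dg model, not a minimal $A_\infty$-model; so even if you computed the $A_\infty$-structure on $\Ext^*$ from matrix factorizations, an additional cobar/Koszul-dual step would be needed to land on the stated presentation. Identifying the Nakayama algebra and invoking its known minimal $A_\infty$-$\Ext$-structure is the key idea your sketch is missing.
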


The article is organized as follows: In Section~\ref{section:preliminaries}, we recall $\Z$-graded commutative Gorenstein rings and the category $\CM \!_0^\Z R$.

In Section~\ref{section:homologically finite dg algebras}, we recall homologically finite and Gorenstein dg algebras and prove the following results.

\begin{propx} [=Proposition~\ref{prop:Serre duality} and Corollary~\ref{cor:Gorensteinness}] \label{prop:D}
Let $A$ be a homologically finite dg algebra (see Section~\ref{section:homologically finite dg algebras} for the definition).
\begin{itemize}
\item[a)] We have a canonical isomorphism
\[
\begin{tikzcd}
D\RHom_A(X, Y)\arrow[no head]{r}{\sim} & \RHom_A(Y, X\lten_A DA)
\end{tikzcd}
\]
bifunctorial in $X\in \per A$ and $Y\in \cd(A)$.
\item[b)] The dg algebra $A$ is Gorenstein if and only if the triangulated category $\per A$ has a Serre functor.
\end{itemize}
\end{propx}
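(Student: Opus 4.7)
\textbf{Part (a).} My plan is to obtain the desired bifunctorial isomorphism as a composition of three standard natural isomorphisms. Writing $D = \RHom_k(-,k)$ and $X^\vee := \RHom_A(X,A)$, the three steps are: (i) $X \lten_A DA \cong D(X^\vee)$, which is trivial for $X=A$ (both sides are $DA$) and extends to all of $\per A$ by d\'evissage along the thick subcategory generated by $A$, since both sides are triangle functors of $X$ commuting with direct summands; (ii) the tensor--hom adjunction
\[
\RHom_A(Y, D(X^\vee)) \;\cong\; D\bigl(Y \lten_A X^\vee\bigr)
\]
for $Y \in \cd(A)$; and (iii) $Y \lten_A X^\vee \cong \RHom_A(X,Y)$, again by the same d\'evissage from $X=A$. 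Composing (i)--(iii) yields the claimed isomorphism, bifunctorial in $X \in \per A$ and $Y \in \cd(A)$ since each step is a natural transformation.

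\textbf{Part (b), direction ($\Rightarrow$).} Assume $A$ is Gorenstein, so in particular $DA \in \per A$. Then the Nakayama functor $\nu := -\lten_A DA$ sends $\per A$ to itself. Specializing part (a) to $X,Y \in \per A$ and applying $H^0$ (which commutes with $D$ up to shift on complexes with finite-dimensional cohomology, guaranteed by properness of $A$) yields the bifunctorial Serre-duality formula
\[
D\Hom_{\per A}(X,Y) \;\cong\; \Hom_{\per A}(Y,\nu X).
\]
A standard Bondal--Kapranov-type argument then shows $\nu$ is fully faithful, while essential surjectivity follows from the two-sided form of the Gorenstein hypothesis (which supplies a quasi-inverse to $\nu$). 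Hence $\nu$ is a Serre functor on $\per A$.

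\textbf{Part (b), direction ($\Leftarrow$).} Assume $\per A$ has a Serre functor $S$, so $SA \in \per A \subset \cd(A)$. Combining the defining property of $S$ with the $X=A$ case of part (a)---which reads $DY \cong \RHom_A(Y,DA)$ for $Y \in \cd(A)$, so that at $H^0$ it gives $\Hom_{\cd(A)}(Y,DA) \cong DH^0(Y)$---yields, for all $Y \in \per A$,
\[
\Hom_{\cd(A)}(Y,SA) \;\cong\; D\Hom_{\cd(A)}(A,Y) \;\cong\; \Hom_{\cd(A)}(Y,DA).
\]
Evaluating this natural iso at $Y = SA$ on $\mathrm{id}_{SA}$ produces a canonical morphism $\alpha\colon SA \to DA$ in $\cd(A)$ which, by naturality, realizes the whole iso by post-composition. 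Testing $\alpha_*$ on $Y = A[n]$ for all $n \in \Z$ shows that $\alpha$ induces isomorphisms on every cohomology group, hence $\alpha$ is a quasi-isomorphism. Therefore $DA \cong SA$ lies in $\per A$, which is exactly the Gorenstein property.

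The main subtlety is the step in ($\Rightarrow$) where one must upgrade $\nu$ from a functor satisfying Serre duality to a genuine auto-equivalence of $\per A$; the precise form of the ``Gorenstein dg algebra'' definition adopted in Section~\ref{section:homologically finite dg algebras} should be exactly what is needed there, and a symmetric remark handles the dual side in ($\Leftarrow$) via the duality $\RHom_A(-,A)\colon \per A \iso (\per A^{op})^{op}$.
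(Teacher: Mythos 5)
Your proof is essentially the paper's. In (a) you compose the same three isomorphisms — the two standard identities for $X\in\per A$ together with tensor–hom adjunction; the only difference is that you prove $X\lten_A DA\cong D\RHom_A(X,A)$ by dévissage from the case $X=A$, while the paper derives it from the biduality $A\cong DDA$, and both work. In (b), direction ($\Rightarrow$) is the same Nakayama-functor argument, and ($\Leftarrow$) is the same comparison of $\Hom(-,\mathbb{S}A)$ with $\Hom(-,DA)$ via (a) plus Yoneda. Two small points to tighten: the standing hypothesis is \emph{homological finiteness}, not properness, so replace ``properness'' in your ($\Rightarrow$); and at the end of ($\Leftarrow$), producing $DA\cong\mathbb{S}A\in\per A$ only gives the inclusion $\langle DA\rangle_{\mathrm{thick}}\subseteq\per A$ — the paper finishes by observing that, since $\mathbb{S}$ is an autoequivalence of $\per A$, one has $\langle DA\rangle_{\mathrm{thick}}=\langle\mathbb{S}A\rangle_{\mathrm{thick}}=\mathbb{S}(\per A)=\per A$, which is the clean way to close the gap rather than the vague ``symmetric remark via $\RHom_A(-,A)$'' you gesture at.
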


In Section~\ref{section:our results}, we consider hypersurface singularities with arbitrary positive grading. In Section~\ref{ss:structure theorem for non-negative a-invariants}, we introduce a quiver with relation and state the main results in the case where the $a$-invariant of $R$ is non-negative. In Section~\ref{ss:numerical semigroup algebras}, we study the case of numerical semigroup algebras and state the results. Then we give examples in the generality of non-standard grading, non-reduced ring and non-algebraically closed field to illustrate the construction of the quivers with relations, \cf Section~\ref{ss:examples of non-negative}. In Section~\ref{ss:structure theorem for negative a-invariants}, we introduce a dg path algebra and state the main results in the case where the $a$-invariant of $R$ is negative. Then we give an example to illustrate the construction of the dg path algebras, \cf Section~\ref{ss:an example of negative}.

In Section~\ref{section:proof of our results}, we prove our results. In Section~\ref{ss:structures of the morphism spaces}, we discuss the structures of the morphism spaces, radical morphism spaces and the square of radical morphism spaces between direct summands of $V$. In Section~\ref{ss:proof of non-negative}, we use them to prove Theorem~\ref{thm:A}. In Section~\ref{ss:proof of negative}, we prove Theorem~\ref{thm:C}.

In Section~\ref{section:Auslander--Reiten quivers}, we draw the Auslander--Reiten quivers of the categories $\CM\!_0 ^{\Z} R$ for finite and countable Cohen--Macaulay representation types in the $\Z$-graded case. Our Auslander--Reiten quivers of $\CM\!_0^\Z R$ are displayed in a standard way following the theory of translation quivers due to Gabriel and Riedtmann, \eg \cite{Riedtmann80, AssemSimsonSkowronski06}. Then we illustrate the explicit positions of the standard silting objects $V$ in the Auslander--Reiten quivers.

In Appendix~\ref{appendix:dg resolutions of self-injective Nakayama algebras}, we give cofibrant dg replacements of self-injective Nakayama algebras. In Section~\ref{ss:Ainfty-algebras}, we recall $A_{\infty}$-algebras. In Section~\ref{ss:cofibrant dg replacements of self-injective Nakayama algebras}, we prove our result.

\begin{notation}
The following notation is used throughout the article: We let $k$ be a field. For a $k$-vector space $V$, we denote its $k$-dual space $\Hom_k(V,k)$ by $DV$. Algebras have units and morphisms of algebras preserve the units. Modules are unital right modules. For a ring $R$, we denote the category of finitely generated $R$-modules by $\mod R$. The degree of a homogeneous element $a$ in a $\Z$-graded vector space is denoted by $\deg a$. We denote the shift functor of $\Z$-graded vector spaces by $(1)$ so that we have $V(1)_n =V_{n+1}$. We denote the truncation of a $\Z$-graded vector space $V$ in non-negative degrees by $V_{\geq 0}$.
\end{notation}

\subsection*{Acknowledgments}
The first-named author is supported by JSPS Grant-in-Aid for Scientific Research (B) 23K22384.

\section{Preliminaries} \label{section:preliminaries}

A commutative noetherian ring of finite Krull dimension is {\it Gorenstein} if it has finite injective dimension as a module over itself. For a Gorenstein ring $R$, a finitely generated $R$-module $M$ is {\it maximal Cohen--Macaulay} if $\Ext_R ^i(M, R)$ vanishes for all positive integers $i$. For a $\Z$-graded Gorenstein ring $R$, we write $\CM \!^{\Z} R$ for the category whose objects are $\Z$-graded $R$-modules which are maximal Cohen--Macaulay and morphisms are $R$-homomorphisms which are homogeneous of degree $0$. The category $\CM \!_0^\Z R$ is defined to be the full subcategory of $\CM \!^{\Z} R$ whose objects are the $\Z$-graded $R$-modules $M$ such that for each non-maximal prime ideal $\mathfrak{p}$ of $R$, the $R_{\frak{p}}$-modules $M_{\frak{p}}$ are projective. Both categories $\CM \!^{\Z} R$ and $\CM \!_0^\Z R$ are Frobenius. They coincide if and only if $R$ has at worst isolated singularities. If $R$ is an $\N$-graded Gorenstein ring of Krull dimension one with $R_0=k$ a field, then there exists a unique integer $a$ such that the $\Z$-graded $R$-module $\Ext_R ^1(k, R(a))$ is isomorphic to $k$. We call $a$ the {\it $a$-invariant} of $R$.

\section{Homologically finite dg algebras and Serre functors} \label{section:homologically finite dg algebras}

Let $A$ be a dg algebra. We write $\cd(A)$ for the (unbounded) derived category of $A$. Its thick subcategory generated by the free dg $A$-module of rank one is the {\em perfect derived category} $\per A$. It consists of compact objects in $\cd(A)$. Recall that $A$ is {\em proper} if its underlying complex is perfect over $k$. Equivalently, its homology $H^p(A)$ is finite-dimensional for all integers $p$ and vanishes for all $|p|\gg 0$. The dg algebra $A$ is {\em homologically finite} if its homology $H^p(A)$ is finite-dimensional for all integers $p$. Following Section~0 of \cite{Jin20}, a dg algebra $A$ is {\em Gorenstein} if the thick subcategories of $\cd(A)$ generated by $A$ respectively $DA$ coincide. The following proposition is stated in Section~I.4.6 of \cite{Happel88} and Section~10.4 of \cite{Keller94}, and the Serre duality in Lemma~4.1 of \cite{Keller08d} is its dual version. We include our proof for the convenience of the reader.

\begin{proposition} \label{prop:Serre duality}
Let $A$ be a homologically finite dg algebra. Then we have a canonical isomorphism
\[
\begin{tikzcd}
D\RHom_A(X, Y)\arrow[no head]{r}{\sim} & \RHom_A(Y, X\lten_A DA)
\end{tikzcd}
\]
bifunctorial in $X\in \per A$ and $Y\in \cd(A)$.
\end{proposition}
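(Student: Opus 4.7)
The plan is to exhibit the isomorphism as the composition of three natural maps, each of which reduces either to tensor--hom adjunction or to a dévissage argument on $\per A$ starting from $X=A$.

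First, for any $M, Y \in \cd(A)$ the tensor--hom adjunction for the pair $(k,A)$ yields a canonical isomorphism
\[
\RHom_A(Y, DM) \cong D(M \lten_A Y),
\]
valid without any finiteness hypothesis on $M$ or $Y$. This will play the role of the middle step.

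Next, I would introduce the two standard evaluation maps
\[
\alpha_X \colon \RHom_A(X, A) \lten_A Y \to \RHom_A(X, Y), \qquad \beta_X \colon X \lten_A DA \to D\RHom_A(X, A),
\]
both defined at the level of dg modules and natural in $X \in \cd(A)$ (and, in the case of $\alpha$, in $Y$). When $X = A$, both $\alpha_A$ and $\beta_A$ are the identity maps on $Y$ respectively $DA$. Since both sides of $\alpha$ and both sides of $\beta$ are triangulated functors of $X$ that commute with coproducts and direct summands, the full subcategories of $\cd(A)$ on which $\alpha_X$ and $\beta_X$ are isomorphisms are thick subcategories of $\cd(A)$ containing $A$. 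Hence they contain $\per A$, so $\alpha_X$ and $\beta_X$ are isomorphisms for every $X \in \per A$ and every $Y \in \cd(A)$.

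Finally, chaining these three isomorphisms gives, for $X \in \per A$ and $Y \in \cd(A)$,
\[
\RHom_A(Y, X \lten_A DA) \cong \RHom_A(Y, D\RHom_A(X, A)) \cong D(\RHom_A(X, A) \lten_A Y) \cong D\RHom_A(X, Y),
\]
where the outer two isomorphisms use $\beta_X$ and $\alpha_X$ respectively and the middle one is tensor--hom adjunction; by construction the composition is bifunctorial in $X$ and $Y$.

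The main obstacle I expect is the naturality bookkeeping: one must define $\alpha$ and $\beta$ at the dg level, rather than only up to quasi-isomorphism, in order to ensure that the composite is genuinely a natural transformation of bifunctors. Once this is set up carefully, the dévissage is routine because both constructions are exact in $X$ and the starting case $X=A$ is tautological. Note that the homological finiteness of $A$ is not actually used in establishing this bifunctorial isomorphism; it is however crucial for the Serre-functor consequence \emph{(b)} drawn in the companion corollary, where one needs $DA$ to belong to $\per A$ in order for $- \lten_A DA$ to restrict to an autoequivalence of $\per A$.
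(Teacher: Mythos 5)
Your proof is correct, and it differs from the paper's in the one step that carries any content. The paper establishes the key isomorphism $X\lten_A DA\iso D\RHom_A(X,A)$ by first using the biduality quasi-isomorphism $A\iso DDA$ (which is where homological finiteness enters) to get $D(X\lten_A DA)\iso \RHom_A(X,A)$, and then dualizing once more; you instead define a natural evaluation map $\beta_X\colon X\lten_A DA\to D\RHom_A(X,A)$ directly, check it is an isomorphism for $X=A$, and propagate to all of $\per A$ by d\'evissage. The first reduction (passing from $\RHom_A(X,Y)$ to $Y\lten_A\RHom_A(X,A)$ for $X$ perfect) and the tensor--hom adjunction step are the same in both arguments. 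Your route buys a genuinely stronger statement: as you observe, the bifunctorial isomorphism holds for an arbitrary dg algebra $A$, with homological finiteness only needed later to conclude that $DA\in\per A$ in the Gorenstein case; the paper's route uses homological finiteness twice (for $A\iso DDA$ and, implicitly, to remove the double dual from $DD(X\lten_A DA)$). One small inaccuracy: you assert that both sides of $\beta$ commute with coproducts, but $X\mapsto D\RHom_A(X,A)$ does not commute with infinite coproducts in general (it turns coproducts into products and then dualizes); this is harmless because $\per A$ is the \emph{thick} subcategory generated by $A$, so closure under finite sums, shifts, cones, and summands suffices, and that is all your argument actually uses.
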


\begin{proof}
Since the dg $A$-module $X$ is perfect, we have the isomorphism
\[
\begin{tikzcd}
Y\lten_A\RHom_A(X, A)\arrow{r}{\sim} & \RHom_A(X, Y) \: .
\end{tikzcd}
\]
It follows that we have the composed isomorphism
\begin{equation} \label{eq:isomorphism 1}
\begin{tikzcd}
D\RHom_A(X, Y) \arrow{r}{\sim} & D(Y\lten_A \RHom_A(X, A)) \arrow[no head]{r}{\sim} & \RHom_A(Y, D\RHom_A(X, A)) \: .
\end{tikzcd}
\end{equation}
Since the dg algebra $A$ is homologically finite, we have the isomorphism $A\iso DDA$ in $\cd(A)$. It follows that we have the composed isomorphism
\[
\begin{tikzcd}
D(X\lten_A DA) \arrow[no head]{r}{\sim} & \RHom_A(X, DDA) & \RHom_A(X, A) \arrow[swap]{l}{\sim} \: .
\end{tikzcd}
\]
If we apply the functor $D$ to it, we obtain the isomorphism
\begin{equation} \label{eq:isomorphism 2}
X\lten_A DA\xlongrightarrow{_\sim} D\RHom_A(X, A) \: .
\end{equation}
Combining the isomorphisms~(\ref{eq:isomorphism 1}) and (\ref{eq:isomorphism 2}) we obtain the desired isomorphism. Its bifunctoriality follows from those of the above isomorphisms.
\end{proof}

\begin{corollary} \label{cor:Gorensteinness}
Let $A$ be a homologically finite dg algebra. Then it is Gorenstein if and only if the triangulated category $\per A$ has a Serre functor.
\end{corollary}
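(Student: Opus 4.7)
The plan is to use Proposition~\ref{prop:Serre duality} to identify the functor $S = -\lten_A DA$ as the unique candidate for a Serre functor on $\per A$; the whole question then reduces to deciding when $S$ defines a triangle equivalence of $\per A$ with itself.

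For the ``if'' direction, I suppose that $\per A$ admits a Serre functor $S'$. Then $S'(X)$ and $X\lten_A DA$ both represent the contravariant functor $Y\mapsto D\Hom(X,Y)$ on $\per A$ (the former by the defining property of $S'$ combined with the Hom-finiteness of $\per A$, the latter by Proposition~\ref{prop:Serre duality}). Compact generation of $\cd(A)$ by $\per A$ then gives an isomorphism $S'(X)\cong X\lten_A DA$ in $\cd(A)$. In particular $DA\cong S'(A)\in\per A$, and since $S'$ is a triangle equivalence, the thick subcategory of $\per A$ generated by $DA=S'(A)$ coincides with $S'(\per A)=\per A$. A thick subcategory of $\per A$ is automatically a thick subcategory of $\cd(A)$, so the thick subcategories of $\cd(A)$ generated by $A$ and by $DA$ agree: $A$ is Gorenstein.

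For the ``only if'' direction, the Gorenstein hypothesis gives $DA\in\per A$. The preimage under $S\colon\cd(A)\to\cd(A)$ of the thick subcategory $\per A$ is itself thick and contains $A$, so $S$ restricts to a triangle endofunctor of $\per A$. Proposition~\ref{prop:Serre duality} then furnishes the Serre duality isomorphism $D\Hom(X,Y)\cong\Hom(Y,SX)$ for $X,Y\in\per A$; applying this isomorphism twice, together with Hom-finiteness of $\per A$ (a consequence of the homological finiteness of $A$, since Hom spaces in $\per A$ are subquotients of finite direct sums of shifts of $H^{\bullet}(A)$), yields $\Hom(SX,SY)\cong\Hom(X,Y)$, so $S$ is fully faithful. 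The essential image of a fully faithful triangle endofunctor into the idempotent-complete category $\per A$ is then a thick subcategory of $\per A$; it contains $SA=DA$ and therefore equals $\per A$ by Gorensteinness, so $S$ is essentially surjective as well, and hence a Serre functor.

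The main obstacle I anticipate is the essential-surjectivity step: upgrading the triangulated essential image of $S$ to a thick subcategory relies on idempotent completeness of $\per A$, a standard but non-trivial property of perfect derived categories of dg algebras. Everything else---identifying $S$ via Yoneda, deducing fully faithfulness from Serre duality, and moving between thick subcategories of $\per A$ and of $\cd(A)$---will follow routinely from Proposition~\ref{prop:Serre duality} and the homological finiteness hypothesis.
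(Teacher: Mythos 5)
Your proof is correct and follows the same route as the paper's: Yoneda on $\Hom(A,-)$ for the direction "Serre functor $\Rightarrow$ Gorenstein," and the functor $-\lten_A DA$ for the converse. The only difference is that the paper simply asserts that $-\lten_A DA$ is a Serre functor once $DA\in\per A$, whereas you spell out the verification (restriction to $\per A$, full faithfulness via the double-dual argument, essential surjectivity via thickness of the image and idempotent completeness of $\per A$)---a useful expansion, but the same underlying argument.
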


\begin{proof}
For the necessity, by Propsition~\ref{prop:Serre duality}, the functor $?\lten_A DA$ is a Serre functor of the triangulated category $\per A$. We now prove the sufficiency. By Proposition~\ref{prop:Serre duality}, we have the natural isomorphism
\[
\begin{tikzcd}
\Hom_{\per A}(A, ?) \arrow[no head]{r}{\sim} & D\Hom_{\per A}(?, DA) \: .
\end{tikzcd}
\]
On the other hand, since the triangulated category $\per A$ has a Serre functor, we denote it by $\mathbb{S}$ and so that we have the natural isomorphism
\[
\begin{tikzcd}
\Hom_{\per A}(A, ?) \arrow{r}{\sim} & D\Hom_{\per A}(?, \mathbb{S}A) \: .
\end{tikzcd}
\]
By Yoneda Lemma, we deduce that $DA\iso \mathbb{S}A$ lies $\per A$ and hence the thick subcategory of $\cd(A)$ generated by it coincides with $\per A$.
\end{proof}

\section{Our results for hypersurface singularities} \label{section:our results}

We use the following setting throughout this section. Let $k$ be a field and $k[x, y]$ a $\Z$-graded polynomial algebra such that $x$ and $y$ are of positive degrees $m$ respectively $n$. Without loss of generality, we may and will assume that $m$ and $n$ are coprime and that we have $m\leq n$. Let $f$ be a homogeneous element of $k[x, y]$ of positive degree. Without loss of generality, we may and will assume that it is monic. Then we decompose $f$ as
\[
f=x^{n_x} y^{n_y} \prod_{j=1}^q f_j ^{n_j} \: ,
\]
where $n_x$, $n_y$ and $n_j$ are non-negative integers and $f_j$ are distinct monic irreducible polynomials which are distinct from $x$ and $y$. The $\N$-graded commutative ring
\[
R=k[x, y]/(f)
\]
is Gorenstein with $a$-invariant $a=\deg f-m-n$. Denote the stable category $\ul{\CM \!}_0^\Z R$ by $\ct$.

\begin{proposition} \label{prop:properness and Gorensteinness}
Let $X$ be an object in $\ct$ such that the thick subcategory generated by it coincides with $\ct$. Then the dg algebra $\RHom_{\ct}(X, X)$ is a proper Gorenstein dg algebra.
\end{proposition}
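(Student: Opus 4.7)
The plan is to reduce properness of $A$ to properness of the endomorphism dg algebra of a convenient fixed thick generator of $\ct$, and then to invoke Corollary~\ref{cor:Gorensteinness} to deduce Gorensteinness from the existence of a Serre functor on $\ct$.

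\textbf{Setup.} First I would record that $\ct = \ul{\CM}_0^\Z R$ is $\Hom$-finite and carries a Serre functor---namely the internal grading shift $(a)$---by the classical Auslander--Reiten--Serre duality for one-dimensional $\N$-graded Gorenstein rings with $R_0$ a field. In particular, for any $X \in \ct$ each $H^i(\RHom_\ct(X,X)) = \Hom_\ct(X,X[i])$ is finite-dimensional over $k$.

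\textbf{Properness.} Let $V \in \ct$ be the standard silting object of Buchweitz--Iyama--Yamaura; it thickly generates $\ct$. Because $R$ is a hypersurface, $\ct$ carries a natural dg enhancement coming from the dg category of graded matrix factorizations, in which $V$ admits a finite-rank representative; a direct inspection of this dg enhancement shows that $B := \RHom_\ct(V,V)$ is proper. For $a \geq 0$ this is immediate, since $V$ is tilting and $B = \End_\ct(V)$ is a finite-dimensional algebra concentrated in degree $0$; for $a < 0$, the finite-dimensionality of the total cohomology is exactly what the explicit descriptions of Theorems~\ref{thm:A} and~\ref{thm:C} verify. Now for an arbitrary thick generator $X$, Keller's dg Morita theorem provides a triangle equivalence $\ct \simeq \per B$ sending $V$ to $B$, under which $X$ corresponds to a perfect dg $B$-module $M$ with $\RHom_B(M,M) \simeq A$ as dg algebras. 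Since $M$ is obtained from $B$ by finitely many shifts, cones, and retracts, and $B$ is proper as a $k$-complex, $M$ is also a proper $k$-complex, and therefore so is the dg algebra $A = \RHom_B(M,M)$.

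\textbf{Gorensteinness.} Now $A$ is proper, in particular homologically finite, and the equivalence $\ct \simeq \per A$ transports the Serre functor $(a)$ on $\ct$ to a Serre functor on $\per A$. By Corollary~\ref{cor:Gorensteinness}, the dg algebra $A$ is Gorenstein.

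\textbf{Main obstacle.} The technical heart of the argument is the properness of $B$. In the ungraded setting, stable categories of matrix factorizations are $2$-periodic and the analogous endomorphism dg algebras are \emph{not} proper; it is the $\Z$-grading that breaks the periodicity and forces the total cohomology of $B$ to be finite-dimensional. Verifying this vanishing outside a bounded range of cohomological degrees is where the hypersurface assumption enters essentially, and this is exactly what the computations in Section~\ref{ss:structures of the morphism spaces} carry out.
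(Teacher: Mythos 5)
Your plan of reducing to a fixed generator and invoking Corollary~\ref{cor:Gorensteinness} for Gorensteinness is sound in outline, and the Morita transfer step from $V$ to an arbitrary thick generator $X$ is correct. However, the properness argument has a genuine gap in the case $a<0$. You assert that ``a direct inspection of this dg enhancement shows that $B=\RHom_\ct(V,V)$ is proper'' and then, for $a<0$, defer to ``the explicit descriptions of Theorems~\ref{thm:A} and~\ref{thm:C}.'' But Theorem~\ref{thm:A} concerns only $a\geq 0$, and the properness statement in Theorem~\ref{thm:C}(b) is in fact \emph{deduced} from the present Proposition in the paper---so citing it here is circular. You correctly identify in your final paragraph that breaking the $2$-periodicity of matrix factorizations is the crux, and that the hypersurface hypothesis must enter essentially, but you do not supply the mechanism; moreover Section~\ref{ss:structures of the morphism spaces}, to which you point, is developed under the standing assumption $a\geq 0$ and does not address the negative case.

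The missing ingredient is Eisenbud's periodicity theorem: for a graded hypersurface, the suspension $\Sigma$ of the singularity category satisfies $\Sigma^2\simeq(\deg f)$. The paper's proof uses this to trade cohomological shifts for internal degree shifts, which is precisely what converts the finiteness one can see (that $\Hom_\ct(Y,Y(p))$ vanishes for $|p|\gg 0$, with $Y=X\oplus\Sigma X$---for $p\ll 0$ because $Y$ is a finitely generated graded module, for $p\gg 0$ by Serre duality as in Proposition~4.6 of \cite{BuchweitzIyamaYamaura20}) into the desired boundedness of $H^*(A)=\bigoplus_i\Hom_\ct(X,\Sigma^i X)$. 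This argument also applies directly to the arbitrary generator $X$, so the paper avoids your detour through $V$ and Keller's Morita theorem altogether. Your Gorensteinness paragraph is essentially the paper's: existence of the Serre functor on $\ct\simeq\per A$ plus Corollary~\ref{cor:Gorensteinness}.
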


\begin{proof}
The Gorensteinness of the dg algebra $A=\RHom_{\ct}(X, X)$ follows from Proposition~4.6 of \cite{BuchweitzIyamaYamaura20} and the sufficiency in Corollary~\ref{cor:Gorensteinness}. We now prove that the dg algebra $A$ is proper. Since each homogeneous component of $R$ is finite-dimensional, the category of finitely generated graded $R$-modules is $\Hom$-finite and hence so is $\ct$. It follows that $A$ is homologically finite. So it suffices to show its homology is concentrated in bounded degrees. Denote shift functor of $\ct$ by $\Si$ and the object $X\oplus \Si X$ by $Y$. Since the graded $R$-module $Y$ is finitely generated, the morphism space $\Hom_R(Y, Y(p))_0$ vanishes for all $p\ll 0$ and hence so does $\Hom_{\ct}(Y, Y(p))$. By Proposition~4.6 of \cite{BuchweitzIyamaYamaura20}, the morphism space $\Hom_{\ct}(Y, Y(p))$ also vanishes for all $p\gg 0$. Since $R$ is a hypersurface, by the graded version of the sufficiency in part~(ii) of Theorem~6.1 of \cite{Eisenbud80}, we have $\Si^2 \simeq (\deg f)$. Therefore, the morphism space $\Hom_{\ct}(Y, \Si^{2p}Y)$ vanishes for all $|p|\gg 0$. Then the assertion follows.
\end{proof}

\subsection{The structure theorem for non-negative $a$-invariants} \label{ss:structure theorem for non-negative a-invariants}

Suppose that $a$ is non-negative. For any positive integer $i$, we define graded $R$-modules
\[
V^i=R(i)_{\geq 0} \ko V^{[1, a]}=\bigoplus_{i=1}^a V^i \: .
\]
Let $K$ be the graded total quotient algebra of $R$. It decomposes as $K=K^x \times K^y \times \prod_{j=1}^q K^j$, where we denote $K^x=k[x, y^{\pm 1}]/(x^{n_x})$, $K^y=k[x^{\pm 1}, y]/(y^{n_y})$, and $K^j=k[x^{\pm 1}, y^{\pm 1}]/(f_j ^{n_j})$. Denote by $1_x$, $1_y$ and $1_j$ the corresponding central idempotents of $K$. If we have $i\geq a+1$, then $V^i$ is a graded $K$-module and we define graded $R$-modules
\[
V^{i, x}=1_x \cdot V^i \iso K^x (i)_{\geq 0} \ko V^x=\bigoplus_{i=a+1}^{a+n}V^{i, x} \: ,
\]
\[
V^{i, y}=1_y \cdot V^i \iso K^y (i)_{\geq 0} \ko V^y=\bigoplus_{i=a+1}^{a+m}V^{i, y} \: ,
\]
\[
V^{i, f_j}=1_j \cdot V^i \iso K^j (i)_{\geq 0} \ko V^{f_j}=V^{a+1, f_j} \mbox{ for }1\leq j\leq q \: .
\]
By Theorem~1.4 of \cite{BuchweitzIyamaYamaura20}, the stable category $\ct =\ul{\CM \!}_0^\Z R$ contains a standard tilting object $V=V^{[1, a]}\oplus V^x \oplus V^y \oplus \bigoplus_{j=1}^q V^{f_j}$.

\begin{lemma}
We have an isomorphism
\[
\End_R(V)_0 \simeq
\begin{bmatrix}
_R(V^{[1, a]})_0 & 0 & 0 & 0 & \cdots & 0 \\
_R(V^{[1, a]}, V^x)_0 & _R(V^x)_0 & 0 & 0 & \cdots & 0 \\
_R(V^{[1, a]}, V^y)_0 & 0 & _R(V^y)_0 & 0 & \cdots & 0 \\
_R(V^{[1, a]}, V^{f_1})_0 & 0 & 0 & _R(V^{f_1})_0 & \cdots & 0 \\
\vdots & \vdots & \vdots & \vdots & \ddots & \vdots \\
_R(V^{[1, a]}, V^{f_q})_0 & 0 & 0 & 0 & \cdots & _R(V^{f_q})_0
\end{bmatrix}
\]
of algebras, where we write $_R(?, -)_0$ (respectively, $_R(?)$) for $\Hom_R(?, -)_0$ (respectively, $\End_R(?)_0$). The entries are described via the bijections
\begin{align*}
\Hom_R(V^i, V^{i'})_0 \iso & R_{i'-i}\quad \mbox{for all }1\leq i, i'\leq a \: , \\
\Hom_R(V^{i, x}, V^{i', x})_0 \iso & K^x_{i'-i}\quad \mbox{for all }a+1\leq i, i'\leq a+n \: , \\
\Hom_R(V^{i, y}, V^{i', y})_0 \iso & K^y_{i'-i}\quad \mbox{for all }a+1\leq i, i'\leq a+m \: , \\
\End_R(V^{a+1, f_j})_0 \iso & K^j_{0}\quad \mbox{for all }1\leq j\leq q \: , \\
\Hom_R(V^i, V^{i', x})_0 \iso & K^x_{i'-i}\quad \mbox{for all }1\leq i\leq a \ko a+1\leq i'\leq a+n \: , \\
\Hom_R(V^i, V^{i', y})_0 \iso & K^y_{i'-i}\quad \mbox{for all }1\leq i\leq a \ko a+1\leq i'\leq a+m \: , \\
\Hom_R(V^i, V^{a+1, f_j})_0 \iso & K^j_{a+1-i}\quad \mbox{for all }1\leq i\leq a \ko 1\leq j\leq q \: .
\end{align*}
\end{lemma}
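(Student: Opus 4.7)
My plan is to decompose $\End_R(V)_0$ along the four types of summands $V^{[1,a]}$, $V^x$, $V^y$, and the $V^{f_j}$, and to compute each block using two tools. For blocks whose source is $V^i = R_{\ge i}(i)$ with $1 \le i \le a$, I apply $\Hom_R(-, N)$ to the exact sequence
\[
0 \to R_{\ge i} \to R \to R/R_{\ge i} \to 0
\]
with $N \in \{R, K^\star\}$ and invoke graded local duality. For blocks whose source is $V^{i,\star} \cong K^\star(i)_{\ge 0}$ with $i \ge a+1$, I use an extension argument that hinges on the fact that $y$ (respectively $x$) acts as a unit on $K^\star$.

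For the first tool, since $R/R_{\ge i}$ is of finite length and $N$ is torsion-free, $\Hom_R(R/R_{\ge i}, N) = 0$, so the long exact sequence collapses to a short exact sequence relating $N_{i'-i}$, $\Hom_R(R_{\ge i}, N)_{i'-i}$, and $\Ext^1_R(R/R_{\ge i}, N)_{i'-i}$. Graded local duality identifies $\Ext^1_R(R/R_{\ge i}, R)$ with $D(R/R_{\ge i})(-a)$, which is concentrated in degrees $[a-i+1, a]$; for $1 \le i' \le a$ its degree-$(i'-i)$ component vanishes. Passing to $N = K^\star$, flatness of the direct summand $K^\star \subset K$ and the observation that $D(R/R_{\ge i})(-a)$ is $y$-torsion (respectively $x$-torsion) while $y$ (respectively $x$) acts invertibly on $K^\star$ force $\Ext^1_R(R/R_{\ge i}, K^\star) = 0$ outright. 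Since any degree-$(i'-i)$ morphism out of $R_{\ge i}$ automatically takes values in $R_{\ge i'}$ (respectively $K^\star_{\ge i'} = K^\star(i')_{\ge 0}$), we obtain the identifications $\Hom_R(V^i, V^{i'})_0 \cong R_{i'-i}$ for $1 \le i, i' \le a$, and $\Hom_R(V^i, V^{i',\star})_0 \cong K^\star_{i'-i}$ for $1 \le i \le a$ and $i' \ge a+1$, each realized by the multiplication map. The vanishing of $\Hom_R(V^{i,\star}, V^{i'})_0$ for $i \ge a+1$, $1 \le i' \le a$ comes for free: the summand inclusion $V^{i,\star} \hookrightarrow V^i$ embeds this space into $\Hom_R(V^i, V^{i'})_0 \cong R_{i'-i} = 0$, since $i' - i < 0$.

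For the second tool, I first verify the identification $V^{i,\star} \otimes_R K^\star = K^\star(i)$: from the short exact sequence $0 \to V^{i,\star} \to K^\star(i) \to K^\star(i)_{<0} \to 0$, flatness of $K^\star$ over $R$ reduces the statement to $K^\star(i)_{<0} \otimes_R K^\star = 0$, which holds because every element of $K^\star(i)_{<0}$ is $y^N$-torsion (respectively $x^N$-torsion) for large $N$. Then every $R$-linear $\phi : V^{i,\star} \to V^{i',\star'}$ extends uniquely to a $K^\star$-linear morphism $\tilde\phi : K^\star(i) \to V^{i',\star'} \otimes_R K^\star$ by the formula $\tilde\phi(w) = y^{-N} \phi(y^N w)$ for sufficiently large $N$. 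In the diagonal case $\star = \star'$, the target is $K^\star(i')$ and $\tilde\phi$ corresponds to multiplication by an element of $K^\star_{i'-i}$, giving $\Hom_R(V^{i,\star}, V^{i',\star})_0 \cong K^\star_{i'-i}$. In the off-diagonal case $\star \ne \star'$, the key observation is $K^\star \otimes_R K^{\star'} = 0$: since $K = \prod_\tau K^\tau$ is a product decomposition arising from orthogonal idempotents $1_\tau \in K$, and $R \to K$ is an epimorphism so $K \otimes_R K = K$, we deduce $K^\star \otimes_R K^{\star'} = 1_\star 1_{\star'} K = 0$. Hence $V^{i',\star'} \otimes_R K^\star = 0$, forcing $\tilde\phi = 0$ and thus $\phi = 0$. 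The main obstacle I anticipate is formalizing the extension argument, in particular ensuring the compatibility of the grading conventions and the identification $V^{i,\star} \otimes_R K^\star = K^\star(i)$; these should reduce to routine checks once flatness of $K^\star$ over $R$ is in hand.
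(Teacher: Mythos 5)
Your overall strategy — computing the diagonal $R$-block and the lower-left column via graded local duality applied to $0\to R_{\ge i}\to R\to R/R_{\ge i}\to 0$, and the $K^\star$-blocks by tensoring up to $K^\star$ along the flat map $R\to K^\star$ — is a genuinely different (more conceptual, more machinery-heavy) route than the paper takes. The paper's proof of this lemma defers to the proof of Lemma~\ref{lem:morphism}, which is by direct inspection: the diagonal $K^\star$-blocks are written down explicitly from the definition of $K^\star$, the lower-left column uses the universal property of localization/quotient together with the isomorphisms $\cdot y^t$, $\cdot x^t$, $\cdot(x^ry^s)^t$, and the vanishing of the off-diagonal blocks is proved by a one-line torsion argument. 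Your local-duality computation of $\Hom_R(V^i,V^{i'})_0$ and $\Hom_R(V^i,V^{i',\star})_0$, and your extension/tensor argument for the diagonal $K^\star$-blocks, are correct. The upper-right vanishing via direct-summand inclusion into $\Hom_R(V^i,V^{i'})_0\cong R_{i'-i}=0$ is also fine.

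However, your argument for the \emph{off-diagonal} vanishing $\Hom_R(V^{i,\star},V^{i',\star'})_0=0$ ($\star\neq\star'$) has a real gap. You extend $\phi\colon V^{i,\star}\to V^{i',\star'}$ to $\tilde\phi\colon K^\star(i)\to V^{i',\star'}\otimes_R K^\star$ and conclude that since the target is $0$ we have $\tilde\phi=0$ and hence $\phi=0$. But the restriction of $\tilde\phi$ to $V^{i,\star}$ is by construction the composite $V^{i,\star}\xrightarrow{\phi}V^{i',\star'}\to V^{i',\star'}\otimes_R K^\star$, and when $V^{i',\star'}\otimes_R K^\star=0$ the second map is the zero map — so $\tilde\phi=0$ is an empty statement about $\phi$. (The implication ``$\tilde\phi=0\Rightarrow\phi=0$'' would require the canonical map $V^{i',\star'}\to V^{i',\star'}\otimes_R K^\star$ to be injective, which it is precisely \emph{not} when the target is zero.) The fix is the torsion argument the paper uses in Lemma~\ref{lem:morphism}.e): for $\star=x$ the module $V^{i,x}\subseteq K^x(i)$ is annihilated by $x^{n_x}$, while for $\star'\neq x$ the element $x$ acts as a non-zerodivisor (indeed invertibly) on $K^{\star'}$, hence on $V^{i',\star'}\subseteq K^{\star'}(i')$; therefore the image of $\phi$ is killed by $x^{n_x}$ and so vanishes. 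The analogous argument with $y$ (respectively with $f_j$) handles $\star=y$ (respectively $\star=f_j$).
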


\begin{proof}
The assertions follows from the proof of Lemma~\ref{lem:morphism}.
\end{proof}

To describe the endomorphism algebra $\End_{\ct}(V)$, we introduce the following quiver with relation $(Q, I_{r, s})$.

\begin{definition} \label{def:quivers with relations}
The set $Q_0$ of the vertices of $Q$ is the union
\[
\{i\mid 1\leq i\leq a\} \cup \{(i, x)\mid 0\leq i<n \} \cup \{(i, y)\mid 0\leq i<m\} \cup \{(0, j)\mid 1\leq j\leq q\} \: .
\]
Notice that the power of $x$ in each term of $f_j$ must be a multiple of $n$. Put $g_j (t_j)=f_j (t_j ^{\frac{1}{n}}, 1)$.

The set $Q_1$ of the arrows of $Q$ is the union
\begin{align*}
 & \{x=x_i\colon i \to i+m \mid 1\leq i\leq a-m\} \\
\cup & \{y=y_i\colon i \to i+n \mid 1\leq i\leq a-n\} \\
\cup & \{a_x=a_{i, x}\colon a-n+1+i \to (i, x)\mid \max \{n-a, 0\}\leq i<n\} \\
\cup & \{a_y=a_{i, y}\colon a-m+1+i \to (i, y)\mid \max\{m-a, 0\}\leq i<m\} \\
\cup & \{a_j=a_{i, j}\colon a-m+1+i \to (0, j)\mid  \max\{m-a, 0\}\leq i<m \ko 1\leq j\leq q\} \\
\cup & \{b_x=b_{i, x}\colon (i, x) \to (i+m, x)\mid 0\leq i<n \} \\
\cup & \{b_y=b_{i, y}\colon (i, y) \to (i+n, y)\mid 0\leq i<m \} \\
\cup & \{b_j=b_{0, j}\colon (0, j) \to (0, j)\mid 1\leq j\leq q\} \: ,
\end{align*}
where $(i, x)$ and $(i, y)$ denote $(i-\lfloor \frac{i}{n} \rfloor n, x)$ respectively $(i-\lfloor \frac{i}{m} \rfloor m, y)$ for all integers $i$.
We define $I$ to be the ideal of $kQ$ generated by
\[
b_x ^{n_x} \ko b_y ^{n_y} \ko g_j(b_j)^{n_j} \ko xy-yx \ko b_{i, x}a_{i, x}y^{\lfloor \frac{i+m}{n}\rfloor}-a_{i+m, x}x \ko b_{i, y}a_{i, y}x^{\lfloor \frac{i+n}{m}\rfloor}-a_{i+n, y}y
\]
To define other ideals, let us introduce the following notation. We choose integers $r$ and $s$ satisfying $rm+sn=1$. For any $1\leq j\leq q$, denote $N^j=k[b_{0, j}]/(g_j(b_{0, j})^{n_j})$. Let $F^j$ be the free $k[x, y]\ten_k N^j$-module with the basis $(a_{i, j})_{\max\{m-a, 0\}\leq i<m}$, where $N^j$ acts on the left and $k[x, y]$ acts on the right. Let $F^j_{xy}$ be its localization with respect to the multiplicative set $\{(xy)^i\mid i\in \N \} \subset k[x, y]$. Let $(F^j_{xy})_{r, s}$ be the $k[x^{\pm 1}, y^{\pm 1}]\ten_k N^j$-submodule of $F^j_{xy}$ generated by
\[
b_{0, j}a_{m-1, j}-a_{m-1, j}\frac{x^n}{y^m}\quad \mbox{and}\quad a_{i, j}-a_{i+1, j}x^r y^s \: .
\]
Denote the intersection of $F^j$ and $(F^j_{xy})_{r, s}$ by $F^j_{r, s}$. Let $\tilde{Q}$ be the infinite quiver obtain from $Q$ by adding vertices $i$ for $i\leq 0$ and arrows $x=x_i \colon i\to i+m$, $y=y_i \colon i\to i+n$ for $i\leq 0$. The vector space
\[
\bigoplus_{i\leq a}e_{(0, j)}(k\tilde{Q}/(xy-yx, g_j(b_{0, j})^{n_j}))e_i
\]
admits a natural $k[x, y]\ten_k N^j$-module structure such that there is a canonical isomorphism $\psi^j_{r, s}$ from $F^j$ to it which maps $a_{i, j}$ to $a_{i, j}$. Denote $\ol{I}^j_{r, s}=\psi^j_{r, s}(F^j_{r, s})\sum_{i=1}^a e_i$. We define $I^j_{r, s}$ to be the preimage of $\ol{I}^j_{r, s}$ under the canonical surjection map
\[
\begin{tikzcd}
[ampersand replacement=\&]
\bigoplus_{i=1}^a e_{(0, j)}(kQ)e_i \arrow{r} \& \bigoplus_{i=1}^a e_{(0, j)}(kQ/(xy-yx, g_j(b_{0, j})^{n_j}))e_i \: .
\end{tikzcd}
\]
The relation $I_{r, s}$ is the ideal of $kQ$ generated by $I$ and $I^j_{r, s}$, $1\leq j\leq q$.
\end{definition}

Recall that a (not necessarily commutative) noetherian ring {\it Iwanaga--Gorenstein} if it has finite injective dimension as both a left and a right module over itself.

\begin{theorem} \label{thm:non-negative}
Let $R$ and $(Q, I_{r, s})$ be as above. Denote the endomorphism algebra of the standard tilting object $V$ in $\ul{\CM \!}_0^\Z R$ by $\Ga$.
\begin{itemize}
\item[a)] The algebra $\Ga$ is isomorphic to $kQ/I_{r, s}$. In particular, the stable category $\ul{\CM \!}_0^\Z R$ is triangle equivalent to $\per kQ/I_{r, s}$.
\item[b)] The algebra $\Ga$ is finite-dimensional and Iwanaga--Gorenstein of self-injective dimension less than or equal to $2$. Moreover, its global dimension is finite if and only if it is less than or equal to $2$ if and only if $R$ is reduced.
\end{itemize}
\end{theorem}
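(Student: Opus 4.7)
The plan is to treat the two parts separately, but to use the quiver-with-relations description from (a) as the structural input for (b). For part (a), I would start from the matrix description of $\End_R(V)_0$ given in the preceding lemma, and produce an algebra homomorphism $\phi\colon kQ\to \End_R(V)_0$ by sending each arrow to the natural multiplication map on the appropriate module: the arrows $x_i$, $y_i$ to multiplication by $x,y$ between the summands $V^i$; the loops $b_{i,x}$, $b_{i,y}$, $b_{0,j}$ to multiplication by $x,y$ on the summands $V^{i,x}$, $V^{i,y}$, $V^{a+1,f_j}$ using the identifications $V^{i,x}\cong K^x(i)_{\ge 0}$ etc.; and the arrows $a_{i,x}$, $a_{i,y}$, $a_{i,j}$ to the compositions of projections $V^i\to V^{i}\cdot 1_x$, $V^i\cdot 1_y$, $V^i\cdot 1_j$ followed by the canonical inclusions into $V^{(\cdot),x}$, $V^{(\cdot),y}$, $V^{(\cdot),f_j}$. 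Composing with the projection $\End_R(V)_0\to \End_\ct(V)=\Ga$ yields the candidate map $\overline\phi\colon kQ\to\Ga$. The vertex idempotents clearly match the decomposition of $V$ into its standard summands, and a counting argument using the bijections in the lemma shows that $\overline\phi$ is surjective.

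To compute the kernel, I would verify by direct calculation that the generators of $I$ belong to it: the three nilpotent relations $b_x^{n_x}$, $b_y^{n_y}$, $g_j(b_j)^{n_j}$ come from the defining equation $f=x^{n_x}y^{n_y}\prod_j f_j^{n_j}=0$ in the three factors $K^x,K^y,K^j$ of the total quotient ring (note $g_j(b_j)=f_j(b_j^{1/n},1)$ reflects the scaling by $y$ required to pass from $f_j(x,y)$ to a polynomial in $b_j=x/y^{m/n}$); the relation $xy-yx$ is immediate; and the mixed relations $b_{i,x}a_{i,x}y^{\lfloor(i+m)/n\rfloor}-a_{i+m,x}x$, together with the $y$-analogue, express that multiplying by $x$ (respectively $y$) inside $V^i$ and then projecting into $V^{\cdot,x}$ agrees, up to a denominator in $y$, with projecting first and then multiplying in $K^x$. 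The supplementary ideal $I^j_{r,s}$ accounts precisely for those morphisms in $\End_R(V)_0$ that land in the image of $\Hom_R(V,R(i))_0$ for some $i$ and hence vanish in $\ct$; the integers $r,s$ with $rm+sn=1$ enter because they provide, via $x^r y^s$, the canonical degree-$1$ generator of $K^j$, and consequently parametrise the relation $b_{0,j}a_{m-1,j}\sim a_{m-1,j}x^n/y^m$ and $a_{i,j}\sim a_{i+1,j}x^r y^s$. A dimension count on each component $e_{(0,j)}\Ga e_i$, using the decomposition of $R/(f_j^{n_j})$, then confirms that the relations so far exhaust the kernel, giving the isomorphism $\Ga\cong kQ/I_{r,s}$; the triangle equivalence $\ul{\CM}^\Z_0 R\simeq\per kQ/I_{r,s}$ follows from the tilting theorem applied to $V$.

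For part (b), finite-dimensionality of $\Ga$ is immediate from the presentation in (a), since every path in $Q$ is killed by the nilpotence relations once it passes through sufficiently many loops or connecting arrows. The Iwanaga--Gorenstein property follows by combining Proposition~\ref{prop:properness and Gorensteinness} with Corollary~\ref{cor:Gorensteinness}: since $V$ is tilting the dg algebra $\RHom_\ct(V,V)$ is quasi-isomorphic to the ordinary algebra $\Ga$, so $\Ga$ being Gorenstein as a dg algebra forces it to be Iwanaga--Gorenstein in the classical sense. For the bound on the self-injective dimension, I would use that the Serre functor on $\per\Ga\simeq\ct$ is induced by $?\lten_\Ga D\Ga$, together with the hypersurface periodicity $\Sigma^2\simeq(\deg f)$ from Eisenbud and the concentration of $\Hom_\ct(Y,\Sigma^iY)$ in bounded degrees established inside the proof of Proposition~\ref{prop:properness and Gorensteinness}; a direct amplitude estimate then shows that $D\Ga$ lies in the thick subcategory of $\per\Ga$ generated by $\Ga[0],\Ga[-1],\Ga[-2]$, giving $\mathrm{id}\,\Ga_\Ga\le 2$.

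The last statement requires the dichotomy between $\gldim\Ga\le 2$ and $\gldim\Ga=\infty$, together with the equivalence with reducedness of $R$. The first dichotomy is a standard consequence of the Iwanaga--Gorenstein bound: if $\gldim\Ga<\infty$, then every module has finite injective dimension bounded by $\mathrm{id}\,\Ga\le 2$. For the second equivalence I would argue that $R$ is reduced precisely when $n_x,n_y,n_j\in\{0,1\}$, i.e.\ when all loops $b_x,b_y,b_j$ vanish in the quotient $kQ/I_{r,s}$; in that case the quiver becomes essentially acyclic with well-controlled connecting arrows, and a direct projective-resolution calculation yields $\gldim\Ga\le 2$. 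Conversely, if some $n_\bullet\ge 2$, the corresponding loop $b_\bullet$ is a nonzero nilpotent element of the local algebra at its vertex; the simple at that vertex then admits a periodic (eventually $2$-periodic, by the hypersurface structure) minimal projective resolution and hence has infinite projective dimension. The main technical obstacle, in my view, is the kernel computation in part (a), specifically pinning down the ideal $I^j_{r,s}$ by correctly matching radical-square morphisms in $\End_\ct(V)$ with the localisation-compatible generators of $F^j_{r,s}$; once this matching is established the rest of the theorem reduces to bookkeeping and general Gorenstein-algebra arguments.
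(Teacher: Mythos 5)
Your plan for part~(a) — construct the surjection $\phi_{r,s}\colon kQ\to\Ga$ from the explicit multiplication maps, verify the generators of $I_{r,s}$ lie in the kernel, and conclude by a dimension count — is essentially the route the paper takes, except that the paper organises the dimension count via the explicit computations of $\Ga/\rad\Ga$ and $\rad\Ga/\rad^2\Ga$ (Lemmas~\ref{lem:morphism}, \ref{lem:radical}, \ref{lem:radical square}) rather than a component-by-component count. That part is fine.

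For part~(b) you deviate from the paper, and the deviation contains a genuine gap. You propose to get $\mathrm{id}\,\Ga\le 2$ from the fact that $D\Ga\in\per\Ga$ (Gorensteinness), the Serre functor $?\lten_\Ga D\Ga$, the periodicity $\Sigma^2\simeq(\deg f)$, and ``a direct amplitude estimate''. But the facts you cite only yield that $D\Ga$ is a perfect $\Ga$-module concentrated in homological degree $0$; the amplitude of $D\Ga$ as a complex is zero regardless of the grading of $R$, and the boundedness of $\Hom_\ct(Y,\Sigma^i Y)$ in $i$ (which is what is actually established inside Proposition~\ref{prop:properness and Gorensteinness}) controls the homological spread of $\End$-spaces, not the \emph{projective dimension} of the $\Ga$-module $D\Ga$. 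Nothing in what you wrote forces $D\Ga$ into the thick subcategory generated by $\Ga[0],\Ga[-1],\Ga[-2]$; indeed the analogous abstract reasoning would give the same ``bound'' for any tilting object in any stable CM category over a Gorenstein ring, which is false. The paper instead bounds the self-injective dimension by exhibiting, for each $1\le i\le a$, the short exact sequence
\[
\begin{tikzcd}[ampersand replacement=\&]
0 \arrow{r} \& R(i)_{\geq 0} \arrow{r} \& R(i+m)_{\geq 0}\oplus R(i+n)_{\geq 0} \arrow{r} \& R(i+m+n)_{\geq 0} \arrow{r} \& 0
\end{tikzcd}
\]
in $\CM\!_0^\Z R$, applying $\Hom_R(?,V)_0$ to produce an explicit length-$2$ projective resolution of the simple $S_i$, and then combining this with the self-injectivity of the Nakayama blocks $\End_R(V^x)_0$, $\End_R(V^y)_0$, $\End_R(V^{f_j})_0$ to control the projective dimensions of all indecomposable injectives. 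That Koszul-type sequence is the source of the number $2$, and your proposal has no substitute for it.

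Two further, smaller remarks. First, to go from dg-Gorensteinness of $\RHom_\ct(V,V)$ to Iwanaga--Gorensteinness of $\Ga$ you implicitly use that $V$ is tilting (so the dg algebra is formal and equals $\Ga$); that step is fine, but it only gives \emph{finite} self-injective dimension, not the bound. Second, in the reducedness dichotomy you invoke an ``eventual $2$-periodicity by the hypersurface structure'' of minimal resolutions of the simples $S_{(i,x)}$, $S_{(i,y)}$, $S_{(0,j)}$ over $\Ga$; Eisenbud's periodicity is a statement about $R$-modules, not $\Ga$-modules, and does not transfer automatically. The correct input, as the paper uses, is simply that the local blocks are self-injective Nakayama algebras with Loewy length $n_x$, $n_y$, $n_j$, so their simples have infinite projective dimension precisely when that Loewy length exceeds~$1$; one then checks this forces infinite projective dimension over $\Ga$ as well.
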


The isomorphism in part~b) of Theorem~\ref{thm:non-negative} is induced from the surjective morphism $\phi_{r, s}\colon kQ\to \Ga$ of algebras given as follows. The images of the idempotents corresponding to the vertices $i$, $(i, x)$, $(i, y)$, and $(0, j)$ under $\phi_{r, s}$ are the idempotents of $\Ga$ corresponding to the indecomposable direct summands $V^i$, $V^{a+1+i, x}$, $V^{a+1+i, y}$, and $V^{a+1, j}$, respectively. The images of the elements $x_i$, $y_i$, $a_{i, x}$, $a_{i, y}$, $a_{i, j}$, $b_{i, x}$, $b_{i, y}$, and $b_{0, j}$ under $\phi_{r, s}$ are given as follows. The elements $\phi_{r, s}(x_i)$ are the morphisms
\[
R(i)_{\geq 0} \longrightarrow R(i+m)_{\geq 0}
\]
given by multiplying $x$. The elements $\phi_{r, s}(y_i)$ are the morphisms
\[
R(i)_{\geq 0} \longrightarrow R(i+n)_{\geq 0}
\]
given by multiplying $y$. The elements $\phi_{r, s}(a_{i, x})$ are the morphisms
\[
\begin{tikzcd}
R(a+1+i-n)_{\geq 0} \arrow{r} & K^x (a+1+i)_{\geq 0}
\end{tikzcd}
\]
given by multiplying $y$. The elements $\phi_{r, s}(a_{i, y})$ are the morphisms
\[
\begin{tikzcd}
R(a+1+i-m)_{\geq 0} \arrow{r} & K^y (a+1+i)_{\geq 0}
\end{tikzcd}
\]
given by multiplying $x$. The elements $\phi_{r, s}(a_{i, j})$ are the morphisms
\[
\begin{tikzcd}
R(a+1+i-m)_{\geq 0} \arrow{r} & K^j (a+1)_{\geq 0}
\end{tikzcd}
\]
given by multiplying $(x^r y^s)^{m-i}$. The elements $\phi_{r, s}(b_{i, x})$ are the morphisms
\[
\begin{tikzcd}
K^x (a+1+i)_{\geq 0} \arrow{r} & K^x (a+1+i+m-\lfloor \frac{i+m}{n}\rfloor n)_{\geq 0}
\end{tikzcd}
\]
given by multiplying $\frac{x}{y^{\lfloor \frac{i+m}{n}\rfloor}}$. The elements $\phi_{r, s}(b_{i, y})$ are the morphisms
\[
\begin{tikzcd}
K^y (a+1+i)_{\geq 0} \arrow{r} & K^y (a+1+i+n-\lfloor \frac{i+n}{m}\rfloor m)_{\geq 0}
\end{tikzcd}
\]
given by multiplying $\frac{y}{x^{\lfloor \frac{i+n}{m}\rfloor}}$. The elements $\phi_{r, s}(b_{0, j})$ are the morphisms
\[
K^j (a+1)_{\geq 0} \longrightarrow K^j (a+1)_{\geq 0}
\]
given by multiplying $\frac{x^n}{y^m}$. Then the kernel of $\phi_{r, s}$ coincides with $I_{r, s}$.

\begin{remark} \label{rk:well-defined}
The quiver $Q$ is uniquely determined by $R$ up to an isomorphism. But the relation $I_{r, s}$ and the morphism $\phi_{r, s}$ depend on choices of the pair $(r, s)$ in general.
\end{remark}

\subsection{Numerical semigroup algebras} \label{ss:numerical semigroup algebras}

For a cofinite submonoid $S\subseteq \N$, the stable category $\ct=\ul{\CM \!}^\Z k[S]$ contains a standard tilting object $V_S=\bigoplus_{i=1}^{a+1} k[S](i)_{\geq 0}$ given in Theorem~1.2 of \cite{BuchweitzIyamaYamaura20}. The following result is an application of Theorem~\ref{thm:non-negative}.

\begin{corollary} \label{cor:numerical semigroup algebra}
Let $S\subset \N$ be a cofinite proper submonoid which is generated by two elements. Let $k[S]\subset k[x]$ be the numerical semigroup algebra. Then the global dimension of the endomorphism algebra $\Ga_S$ of the standard tilting object $V_S$ in $\ul{\CM \!}^\Z k[S]$ is less than or equal to $2$.
\end{corollary}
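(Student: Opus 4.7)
The plan is to realize $k[S]$ as a hypersurface to which Theorem~\ref{thm:non-negative}~b) applies. Since $S$ is cofinite, proper, and generated by two elements, its generators can be chosen to be coprime integers $m, n$ with $2\leq m\leq n$. Then $k[S]=k[t^m,t^n]\cong k[x,y]/(x^n-y^m)$ via $t^m\mapsto x$, $t^n\mapsto y$, which identifies $k[S]$ with $R=k[x,y]/(f)$ where $f=x^n-y^m$, $\deg x=m$, $\deg y=n$. This places us in Assumption~\ref{ass:setting} with $\deg f=mn$. The numerical hypothesis $\deg f\geq m+n$ becomes $(m-1)(n-1)\geq 1$, which holds because $m,n\geq 2$; equivalently, $a=mn-m-n\geq 0$. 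The classical irreducibility of $x^n-y^m$ when $\gcd(m,n)=1$ makes $R$ a domain, hence reduced, and in the theorem's notation it gives $n_x=n_y=0$ together with a single irreducible factor $f_1=f$ of multiplicity one. Moreover, $R$ being a one-dimensional domain has $(0)$ as its only non-maximal prime, so $\ul{\CM \!}^\Z R=\ul{\CM \!}_0^\Z R$.

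The one step that needs a short verification is the identification of the two descriptions of the standard tilting object. With the above data, Theorem~\ref{thm:non-negative} supplies $V=V^{[1,a]}\oplus V^{a+1,f_1}$, since $V^x=V^y=0$ and $q=1$. The graded total quotient ring is $K=k[t^{\pm 1}]$ with $\deg t=1$, and by the very definition of the Frobenius number, the graded components of $R$ and $K$ coincide in all degrees $\geq a+1$. It follows that $V^{a+1,f_1}=K(a+1)_{\geq 0}\cong R(a+1)_{\geq 0}$, which is precisely the top summand of $V_S=\bigoplus_{i=1}^{a+1}k[S](i)_{\geq 0}$. Hence $V\cong V_S$ and $\Ga_S\cong\Ga$; part~b) of Theorem~\ref{thm:non-negative} combined with the reducedness of $R$ then yields that the global dimension of $\Ga_S$ is at most $2$.

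Since every step is either an unpacking of definitions or a classical fact, I do not anticipate any substantial obstacle; the mildly delicate point is the matching $V\cong V_S$, which rests on identifying the graded total quotient ring with $k[t^{\pm 1}]$ and on the classical characterization of the Frobenius number.
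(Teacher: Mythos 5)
Your proof is correct and matches the paper's approach: realize $k[S]\cong k[x,y]/(x^n-y^m)$ with $\deg x=m$, $\deg y=n$, observe $a=mn-m-n\geq 0$, identify $V_S$ with $V$, and apply Theorem~\ref{thm:non-negative}. The details you supply --- the identification $V^{a+1,f_1}\cong R(a+1)_{\geq 0}$ via the Frobenius number and the equality $\ul{\CM \!}^\Z R=\ul{\CM \!}_0^\Z R$ for the one-dimensional domain $R$ --- are exactly what the paper's terser proof leaves implicit, and you have correctly cited part~b) of Theorem~\ref{thm:non-negative}, where the paper's proof says part~a) but evidently means the global-dimension statement.
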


\begin{proof}
Assume that $S$ is generated by $m$ and $n$. Since $m$ and $n$ are greater than $1$, the $a$-invariant $a=mn-m-n$ is non-negative. The standard tilting object $V_S$ is isomorphic to the $V$ in Section~\ref{ss:structure theorem for non-negative a-invariants}. Then the assertion follows by applying part~a) of Theorem~\ref{thm:non-negative} to $f=x^n-y^m$.
\end{proof}

We give a family of examples to illustrate quivers with relations $(Q, I_{r, s})$ in this case.

\begin{example} \label{ex:numerical semigroup algebra}
Let $k$ be an arbitrary field. For $R=k[x, y]/(x^{m+1}-y^m)$ with $\deg x=m\geq 2$ and $\deg y=m+1$, we have $a=m^2-m-1$. The quiver $Q$ is given as follows.
\[
\adjustbox{max width=\textwidth}{
\begin{tikzcd}
 & m \arrow{r}{x} \arrow{dr}{y} & 2m & \cdots & m^2-3m \arrow{r}{x} \arrow{dr}{y} & m^2-2m \arrow{ddr}{a_{0, 1}} & \\
1 \arrow{r}{x} & m+1 \arrow{r}{x} & 2m+1 & \cdots & m^2-3m+1 \arrow{r}{x} & m^2-2m+1 \arrow[swap]{dr}{a_{1, 1}} & \\
\vdots & \vdots & \vdots & \ddots & \vdots & \vdots & (0, 1) \\
m-2 \arrow{r}{x} \arrow{dr}{y} & 2m-2 \arrow{r}{x} \arrow{dr}{y} & 3m-2 & \cdots & m^2-2m-2 \arrow{r}{x} \arrow{dr}{y} & m^2-m-2 \arrow{ur}{a_{m-2, 1}} & \\
m-1 \arrow{r}{x} \arrow[swap]{uuuurr}{y} & 2m-1 \arrow{r}{x} & 3m-1 & \cdots & m^2-2m-1 \arrow{r}{x} & m^2-m-1 \arrow[swap]{uur}{a_{m-1, 1}} &
\end{tikzcd}
}
\]

The ideal $I$ is generated by $xy-yx$. For a choice of the pair $(r, s)$, the module $(F^1_{xy})_{r, s}$ over $k[x^{\pm 1}, y^{\pm 1}]\ten_k k$ is generated by $a_{m-1, 1}\frac{x^{m+1}-y^m}{y^m}$ and $a_{i, 1}-a_{i+1, 1}x^r y^s$. The ideal $\ol{I}^1_{r, s}$ is generated by $a_{i, 1}x-a_{i+1, 1}y$. Therefore, for any choice of $(r, s)$, the relation $I_{r, s}$ is the ideal generated by $xy-yx$ and $a_{i, 1}x-a_{i+1, 1}y$.

The surjective morphism $\phi_{r, s}\colon kQ\to \Ga_S$ of algebras is given as follows. The images of the idempotents corresponding to the vertices $i$ (respectively, $(0, 1)$) under $\phi_{r, s}$ are the idempotents of $\Ga_S$ corresponding to the indecomposable direct summands $V^i$ (respectively, $V^{m^2 - m, f_1}$). The images of the elements $x_i$, $y_i$, and $a_{i, 1}$ under $\phi_{r, s}$ are given as follows. The elements $\phi_{r, s}(x_i)$ are the morphisms
\[
R(i)_{\geq 0} \longrightarrow R(i+m)_{\geq 0}
\]
given by multiplying $x$. The elements $\phi_{r, s}(y_i)$ are the morphisms
\[
R(i)_{\geq 0} \longrightarrow R(i+m+1)_{\geq 0}
\]
given by multiplying $y$. The elements $\phi_{r, s}(a_{i, 1})$ are the morphisms
\[
\begin{tikzcd}
R(m^2-2m+i)_{\geq 0} \arrow{r} & (k[x^{\pm 1}, y^{\pm 1}]/(x^{m+1}-y^m))(m^2-m)_{\geq 0}
\end{tikzcd}
\]
given by multiplying $(x^r y^s)^{m-i}$. Then the kernel of $\phi_{r, s}$ coincides with $I_{r, s}$.
\end{example}

It is natural to ask whether, if a cofinite submonoid $S\subset \N$ is not generated by two elements, the global dimension of the endomorphism algebra $\Ga_S$ of the standard tilting object $V_S$ is also less than or equal to $2$. The following counter example in the case where $S$ is generated by three elements shows it is not the case.

\begin{example} \label{ex:three generators}
Let $k$ be an arbitrary field and $S\subset \N$ the submonoid generated by $15$, $21$, and $35$. 
Let $k[S]\subset k[x]$ be the numerical semigroup algebra. We have $a=139$. Denote the Gabriel quiver of $\Ga_S$ by $Q$. The set $Q_0$ of its vertices is $\{i\mid 1\leq i\leq 140\}$. The set $Q_1$ of its arrows is the union
\begin{align*}
 & \{x=x_i\colon i \to i+15\mid 1\leq i\leq 124\} \\
\cup & \{y=y_i\colon i \to i+21\mid 1\leq i\leq 118\} \\
\cup & \{z=z_i\colon i \to i+35\mid 1\leq i\leq 104\} \\
\cup & \{a=a_i\colon i+125 \to 140\mid 0\leq i<15\} \: .
\end{align*}
The arrows of $Q$ make $Q_0$ into a poset, and the algebra $\Ga_S$ is isomorphic to the incidence algebra of $Q_0$. We have the minimal projective resolution
\[
\begin{tikzcd}[ampersand replacement=\&]
0 \arrow{r} \& P_{140}^{\oplus 2}\arrow{rr}{
\begin{bsmallmatrix}
-ax^4 & ax^4 \\
0 & -2ax \\
ax & ax \\
a & 0 \\
0 & a
\end{bsmallmatrix}
} \& \& P_{72}\oplus P_{121}^{\oplus 2}\oplus P_{127}^{\oplus 2} \arrow{rrr}{
\begin{bsmallmatrix}
z & y^4 & 0 & x^6 & x^6 \\
y & z^2 & z^2 & 0 & 0 \\
x & 0 & 0 & z^2 & -z^2 \\
0 & 0 & x & -y & -y \\
0 & x & x & -y & y
\end{bsmallmatrix}
} \& \& \& P_{37}\oplus P_{51}\oplus P_{57}\oplus P_{106}^{\oplus 2}
\end{tikzcd}
\]
\[
\begin{tikzcd}[ampersand replacement=\&]
\arrow{rrrr}{
\begin{bsmallmatrix}
y & -z & 0 & x^6 & 0 \\
-x & 0 & z & -y^4 & y^4 \\
0 & x & -y & 0 & -z^2
\end{bsmallmatrix}
} \& \& \& \& P_{16}\oplus P_{22}\oplus P_{36} \arrow{r}{
\begin{bsmallmatrix}
x & y & z
\end{bsmallmatrix}
} \& P_1 \arrow{r} \& S_1 \arrow{r} \& 0
\end{tikzcd}
\]
of the simple $\Ga_S \op$-module $S_1$. The global dimension of $\Ga_S$ is $4$.
\end{example}

More generally, fix an integer $n>2$ and consider cofinite submonoids $S\subset \N$ whose minimal number of generators is $n$, and the endomorphism algebra $\Ga_S$ of the standard tilting object $V_S$. We believe that the global dimensions of $\Ga_S$ can be as large as possible.

\subsection{Examples of Theorem~\ref{thm:non-negative}} \label{ss:examples of non-negative}

The following examples serve to illustrate quivers with relations $(Q, I_{r, s})$ in the case of non-standard grading, non-reduced ring $R$, and non-algebraically closed field $k$.

\begin{example} \label{ex:ADE singularities}
The classification of $ADE$ singularities $R=k[x, y]/(f)$ is given in Section~\ref{section:Auslander--Reiten quivers}. It shows that almost all $ADE$ singularities are given by non-standard grading. In the case of type $E_7$, we have $f=y(x^3-y^2)$ with $\deg x=2$ and $\deg y=3$. So we have $a=4$. The quiver $Q$ is given as follows.
\[
\begin{tikzcd}[row sep=1.2em]
1 \arrow{rr}{x} \arrow{ddrrr}{y} & & 3 \arrow{rr}{a_{0, y}} \arrow{drr}{a_{0, 1}} & & (0, y) & \\
 & & & & (0, 1) & \\
 & 2 \arrow{rr}{x} & & 4 \arrow{ur}{a_{1, 1}} \arrow{rr}{a_{1, y}} & & (1, y)
\end{tikzcd}
\]

The ideal $I$ is generated by $a_{1, y}y$. For a choice of the pair $(r, s)$, the module $(F^1_{xy})_{r, s}$ over $k[x^{\pm 1}, y^{\pm 1}]\ten_k k$ is generated by $a_{1, 1}\frac{x^3-y^2}{y^2}$ and $a_{0, 1}-a_{1, 1}x^r y^s$. The ideal $\ol{I}^1_{r, s}$ is generated by $a_{0, 1}x-a_{1, 1}y$. Therefore, for any choice of $(r, s)$, the relation $I_{r, s}$ is the ideal generated by $a_{1, y}y$ and $a_{0, 1}x-a_{1, 1}y$.

The surjective morphism $\phi_{r, s}\colon kQ\to \Ga$ of algebras is given as follows. The images of the idempotents corresponding to the vertices $i$, $(i, y)$, and $(0, 1)$ under $\phi_{r, s}$ are the idempotents of $\Ga$ corresponding to the indecomposable direct summands $V^i$, $V^{i+5, y}$, and $V^{5, f_1}$, respectively. The images of the elements $x_i$, $y_1$, $a_{i, y}$, and $a_{i, 1}$ under $\phi_{r, s}$ are given as follows. The elements $\phi_{r, s}(x_i)$ are the morphisms
\[
R(i)_{\geq 0} \longrightarrow R(i+2)_{\geq 0}
\]
given by multiplying $x$. The elements $\phi_{r, s}(y_1)$ is the morphism
\[
R(1)_{\geq 0} \longrightarrow R(4)_{\geq 0}
\]
given by multiplying $y$. The elements $\phi_{r, s}(a_{i, y})$ are the morphisms
\[
R(i+3)_{\geq 0} \longrightarrow k[x^{\pm 1}](i+5)_{\geq 0}
\]
given by multiplying $x$. The elements $\phi_{r, s}(a_{i, 1})$ are the morphisms
\[
\begin{tikzcd}
R(i+3)_{\geq 0} \arrow{r} & (k[x^{\pm 1}, y^{\pm 1}]/(x^3 -y^2))(5)_{\geq 0}
\end{tikzcd}
\]
given by multiplying $(x^r y^s)^{2-i}$. Then the kernel of $\phi_{r, s}$ coincides with $I_{r, s}$.

For other $ADE$ singularities, the corresponding quivers with relations can be illustrated similarly.
\end{example}

\begin{example} \label{ex:non-reduced}
We give a family of examples in the case where $R$ is not reduced. Let $k$ be an arbitrary field and $n_y \geq 2$ an integer. For $R=k[x, y]/(y^{n_y})$ with $\deg x=2$ and $\deg y=3$, we have $a=3n_y-5$. If $n_y$ is even, then the quiver $Q$ is given as follows.
\[
\adjustbox{max width=\textwidth}{
\begin{tikzcd}
 & 2 \arrow{r}{x} \arrow{dr}{y} & 4 & \cdots & 3n_y-10 \arrow{r}{x} \arrow{dr}{y} & 3n_y-8 \arrow{r}{x} \arrow{dr}{y} & 3n_y-6 \arrow{r}{a_{0, y}} & (0, y) \arrow[shift left=0.5ex]{d}{b_{0, y}} \\
1 \arrow{r}{x} \arrow{urr}{y} & 3 \arrow{r}{x} & 5 & \cdots & 3n_y-9 \arrow{r}{x} \arrow{urr}{y} & 3n_y-7 \arrow{r}{x} & 3n_y-5 \arrow{r}{a_{1, y}} & (1, y) \arrow[shift left=0.5ex]{u}{b_{1, y}}
\end{tikzcd}
}
\]

If $n_y$ is odd, then the quiver $Q$ is given as follows.
\[
\adjustbox{max width=\textwidth}{
\begin{tikzcd}
1 \arrow{r}{x} \arrow{dr}{y} & 3 \arrow{r}{x} \arrow{dr}{y} & 5 & \cdots & 3n_y-10 \arrow{r}{x} \arrow{dr}{y} & 3n_y-8 \arrow{r}{x} \arrow{dr}{y} & 3n_y-6 \arrow{r}{a_{0, y}} & (0, y) \arrow[shift left=0.5ex]{d}{b_{0, y}} \\
2 \arrow{r}{x} \arrow{urr}{y} & 4 \arrow{r}{x} & 6 & \cdots & 3n_y-9 \arrow{r}{x} \arrow{urr}{y} & 3n_y-7 \arrow{r}{x} & 3n_y-5 \arrow{r}{a_{1, y}} & (1, y) \arrow[shift left=0.5ex]{u}{b_{1, y}}
\end{tikzcd}
}
\]

In both cases, the ideal $I$ is generated by $b_y ^{n_y}$, $xy-yx$, $b_{0, y}a_{0, y}x-a_{1, y}y$, and \linebreak $b_{1, y}a_{1, y}x^2 -a_{0, y}y$. Since there is no $(F^j_{xy})_{r, s}$, for any choice of the pair $(r, s)$, the relation $I_{r, s}$ coincides with $I$.

The surjective morphism $\phi_{r, s}\colon kQ\to \Ga$ of algebras is given as follows. The images of the idempotents corresponding to the vertices $i$ (respectively, $(i, y)$) under $\phi_{r, s}$ are the idempotents of $\Ga$ corresponding to the indecomposable direct summands $V^i$ (respectively, $V^{3n_y-4+i, y}$). The images of the elements $x_i$, $y_i$, $a_{i, y}$, and $b_{i, y}$ under $\phi_{r, s}$ are given as follows. The elements $\phi_{r, s}(x_i)$ are the morphisms
\[
R(i)_{\geq 0}\longrightarrow R(i+2)_{\geq 0}
\]
given by multiplying $x$. The elements $\phi_{r, s}(y_i)$ are the morphisms
\[
R(i)_{\geq 0}\longrightarrow R(i+3)_{\geq 0}
\]
given by multiplying $y$. The elements $\phi_{r, s}(a_{i, y})$ are the morphisms
\[
\begin{tikzcd}
R(3n_y-6+i)_{\geq 0} \arrow{r} & (k[x^{\pm 1}, y]/(y^{n_y}))(3n_y-4+i)_{\geq 0}
\end{tikzcd}
\]
given by multiplying $x$. The element $\phi_{r, s}(b_{0, y})$ is the morphism
\[
\begin{tikzcd}
(k[x^{\pm 1}, y]/(y^{n_y}))(3n_y-4)_{\geq 0} \arrow{r} & (k[x^{\pm 1}, y]/(y^{n_y}))(3n_y-3)_{\geq 0}
\end{tikzcd}
\]
given by multiplying $\frac{y}{x}$. The element $\phi_{r, s}(b_{1, y})$ is the morphism
\[
\begin{tikzcd}
(k[x^{\pm 1}, y]/(y^{n_y}))(3n_y-3)_{\geq 0} \arrow{r} & (k[x^{\pm 1}, y]/(y^{n_y}))(3n_y-4)_{\geq 0}
\end{tikzcd}
\]
given by multiplying $\frac{y}{x^2}$. Then the kernel of $\phi_{r, s}$ coincides with $I_{r, s}$.
\end{example}

\begin{example} \label{ex:non-algebraically closed}
To give an example in the case where $k$ is not algebraically closed, we consider $R=\Q[x, y]/(x^6 +y^4)$ with $\deg x=2$ and $\deg y=3$. Then we have $a=7$. The quiver $Q$ is given as follows.
\[
\begin{tikzcd}
1 \arrow{rr}{x}\arrow{drrr}{y} & & 3 \arrow{rr}{x}\arrow{drrr}{y} & & 5 \arrow{rr}{x} & & 7 \arrow{dr}{a_{1, 1}} \\
 & 2 \arrow[swap]{rr}{x}\arrow{urrr}{y} & & 4 \arrow[swap]{rr}{x}\arrow{urrr}{y} & & 6 \arrow[swap]{rr}{a_{0, 1}} & & (0, 1) \arrow[out=315,in=45,loop,"b_{0, 1}",swap]
\end{tikzcd}
\]
The ideal $I$ is generated by $xy-yx$ and $b_{0, 1}^2+e_{0, 1}$. For a choice of the pair $(r, s)$, the module $(F^1_{xy})_{r, s}$ over $\Q[x^{\pm 1}, y^{\pm 1}]\ten_{\Q} \Q[b_{0, 1}]/(b_{0, 1}^2)$ is generated by $b_{0, 1}a_{1, 1}-a_{1, 1}\frac{x^3}{y^2}$ and \linebreak $a_{0, 1}-a_{1, 1}x^r y^s$. The pair $(r, s)$ is in one of the following four cases.

If we have $(r, s)=(12t-1, -8t+1)$ for an integer $t$, then the ideal $\ol{I}^1_{r, s}$ is generated by $b_{0, 1}a_{1, 1}y^2-a_{1, 1}x^3$ and $a_{0, 1}x-a_{1, 1}y$. Therefore, the relation $I_{r, s}$ is the ideal generated by $xy-yx$, $b_{0, 1}^2+e_{0, 1}$, $b_{0, 1}a_{1, 1}y^2-a_{1, 1}x^3$, and $a_{0, 1}x-a_{1, 1}y$.

If we have $(r, s)=(12t+2, -8t-1)$ for an integer $t$, then the ideal $\ol{I}^1_{r, s}$ is generated by $b_{0, 1}a_{1, 1}y^2-a_{1, 1}x^3$ and $a_{0, 1}y-a_{1, 1}x^2$. Therefore, the relation $I_{r, s}$ is the ideal generated by $xy-yx$, $b_{0, 1}^2+e_{0, 1}$, $b_{0, 1}a_{1, 1}y^2-a_{1, 1}x^3$, and $a_{0, 1}y-a_{1, 1}x^2$.

If we have $(r, s)=(12t-4, -8t+3)$ for an integer $t$, then the ideal $\ol{I}^1_{r, s}$ is generated by $b_{0, 1}a_{1, 1}y^2-a_{1, 1}x^3$ and $a_{0, 1}y+a_{1, 1}x^2$. Therefore, the relation $I_{r, s}$ is the ideal generated by $xy-yx$, $b_{0, 1}^2+e_{0, 1}$, $b_{0, 1}a_{1, 1}y^2-a_{1, 1}x^3$, and $a_{0, 1}y+a_{1, 1}x^2$.

If we have $(r, s)=(12t+5, -8t-3)$ for an integer $t$, then the ideal $\ol{I}^1_{r, s}$ is generated by $b_{0, 1}a_{1, 1}y^2-a_{1, 1}x^3$ and $a_{0, 1}x+a_{1, 1}y$. Therefore, the relation $I_{r, s}$ is the ideal generated by $xy-yx$, $b_{0, 1}^2+e_{0, 1}$, $b_{0, 1}a_{1, 1}y^2-a_{1, 1}x^3$, and $a_{0, 1}x+a_{1, 1}y$.

The surjective morphism $\phi_{r, s}\colon kQ\to \Ga$ of $\Q$-algebras is given as follows. The images of the idempotents corresponding to the vertices $i$ (respectively, $(0, 1)$) under $\phi_{r, s}$ are the idempotents of $\Ga$ corresponding to the indecomposable direct summands $V^i$ (respectively, $V^{8, f_1}$). The images of the elements $x_i$, $y_i$, $a_{i, 1}$, and $b_{0, 1}$ under $\phi_{r, s}$ are given as follows. The elements $\phi_{r, s}(x_i)$ are the morphisms
\[
R(i)_{\geq 0} \longrightarrow R(i+2)_{\geq 0}
\]
given by multiplying $x$. The elements $\phi_{r, s}(y_i)$ are the morphisms
\[
R(i)_{\geq 0} \longrightarrow R(i+3)_{\geq 0}
\]
given by multiplying $y$. The element $\phi_{r, s}(a_{0, 1})$ is the morphism
\[
\begin{tikzcd}
R(6)_{\geq 0} \arrow{r} & (\Q[x^{\pm 1}, y^{\pm 1}]/(x^6 +y^4))(8)_{\geq 0}
\end{tikzcd}
\]
given by multiplying $(x^r y^s)^2$. The element $\phi_{r, s}(a_{1, 1})$ is the morphism
\[
\begin{tikzcd}
R(7)_{\geq 0} \arrow{r} & (\Q[x^{\pm 1}, y^{\pm 1}]/(x^6 +y^4))(8)_{\geq 0}
\end{tikzcd}
\]
given by multiplying $x^r y^s$. The element $\phi_{r, s}(b_{0, 1})$ is the morphism
\[
\begin{tikzcd}
(\Q[x^{\pm 1}, y^{\pm 1}]/(x^6 +y^4))(8)_{\geq 0} \arrow{r} & (\Q[x^{\pm 1}, y^{\pm 1}]/(x^6 +y^4))(8)_{\geq 0}
\end{tikzcd}
\]
given by multiplying $\frac{x^3}{y^2}$. Then the kernel of $\phi_{r, s}$ coincides with $I_{r, s}$.
\end{example}

\subsection{The structure theorem for negative $a$-invariants} \label{ss:structure theorem for negative a-invariants}

Recall that we write
\[
f=x^{n_x}y^{n_y} \prod_{j=1}^q f_j ^{n_j} \: ,
\]
where $f_j$ are monic irreducible polynomials. We assume that $m=\deg x$ and $n=\deg y$ are coprime with $m\leq n$.

\begin{lemma} \label{lem:negative}
The $a$-invariant of $R$ is negative if and only if either the $\N$-graded commutative ring $R$ is regular, or $f$ equals $x^{n_x}$ with $n_x>1$ and $(n_x-1)m<n$.
\end{lemma}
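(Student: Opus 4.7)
Since $a = \deg f - m - n$, the condition $a < 0$ is equivalent to $\deg f < m + n$. The plan is to combine a lower bound on the degrees of the irreducible factors $f_j$ with a short case analysis of the factorisation $f = x^{n_x} y^{n_y} \prod_{j=1}^q f_j^{n_j}$.

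The key estimate is that any monic homogeneous irreducible polynomial $f_j \in k[x, y]$ distinct from $x$ and $y$ satisfies $\deg f_j \geq mn$. Indeed, $f_j$ cannot be a monomial, so it has two distinct terms $x^a y^b$ and $x^{a'} y^{b'}$ of the same weight; from $am + bn = a'm + b'n$ and $\gcd(m, n) = 1$ the difference $|a - a'|$ is a positive multiple of $n$, so $\max(a, a') \geq n$ and hence $\deg f_j \geq mn$.

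Assuming $\deg f < m + n$, I would split into two cases. If $q \geq 1$, then $\deg f \geq mn$, so $mn < m + n$ forces $(m - 1)(n - 1) < 1$; combined with $m \leq n$ this gives $m = 1$. A further pinch yields $\deg f = n$, $n_x = n_y = 0$, $q = 1$, $n_1 = 1$. Monic homogeneous polynomials of degree $n$ in $k[x, y]$ with $\deg x = 1, \deg y = n$ are exactly the expressions $\alpha y + \beta x^n$, and irreducibility together with $f \neq x$ forces $\alpha \neq 0$; hence $R \cong k[x]$ is regular. If instead $q = 0$, then $f = x^{n_x} y^{n_y}$; the inequality rules out both $n_x, n_y \geq 1$ and the subcase $n_x = 0$ with $n_y \geq 2$ (which would demand $(n_y - 1)n < m \leq n$, impossible), leaving only $f \in \{x, y\}$ (both giving regular $R$) or $f = x^{n_x}$ with $n_x \geq 2$ and $(n_x - 1)m < n$.

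For the converse, the case $f = x^{n_x}$ with $(n_x - 1)m < n$ gives $\deg f = n_x m < m + n$ by definition. If instead $R$ is regular, then $R$ is a connected graded Dedekind domain, $f$ is irreducible, and the localisation at the irrelevant maximal ideal is a DVR; this forces $f$ to contain a monomial of unweighted degree one, and the only such monomials in $k[x, y]$ are $x$ (of weight $m$) and $y$ (of weight $n$), so $\deg f \in \{m, n\}$ and $\deg f \leq n < m + n$. The main obstacle is just the bookkeeping needed to rule out the symmetric-looking alternative $f = y^{n_y}$ with $n_y \geq 2$ and to pin down the few surviving shapes; once the estimate $\deg f_j \geq mn$ is in place, nothing substantive remains.
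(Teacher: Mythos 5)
Your proof is correct and follows essentially the same route as the paper's: translate $a<0$ into $\deg f<m+n$, observe that any irreducible factor $f_j$ distinct from $x,y$ has degree at least $mn$, and run a case analysis on the factorisation; for the converse, use that regularity forces $f\notin (x,y)^2$ so $\deg f\in\{m,n\}$. The main difference is cosmetic: you state the bound $\deg f_j\geq mn$ explicitly and split on $q\geq 1$ versus $q=0$, whereas the paper leaves the bound implicit and splits on which exponent $n_y$, $n_j$, $n_x$ is positive; and your argument that regularity forces a degree-one monomial in $f$ (via $R$ being a Dedekind domain and the localization at the irrelevant ideal being a DVR) is a slightly roundabout way of saying $f\notin(x,y)^2$, which is all the paper invokes.
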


\begin{proof}
We first prove the sufficiency. If $R$ is regular, then the polynomial $f$ does not lie in the ideal $(x, y)^2$ of $k[x, y]$. Therefore, the degree of $f$ is $m$ or $n$ and hence the $a$-invariant $a=\deg f-m-n$ is negative. If $f$ equals $x^{n_x}$ with $n_x>1$ and $(n_x-1)m<n$, then $a$ is also negative.

We now prove the necessity. Since the $a$-invariant $a=\deg f-m-n$ of $R$ is negative, we have $\deg f<m+n$. If $n_y$ is positive, then we must have $n_x=n_j=0$ for all $1\leq j\leq q$ and $f=y$. If $n_j$ is positive for some $1\leq j\leq q$, then we must have $n_x=n_y=n_{j'}=0$ for all $j'\neq j$ and $f=\alpha x^n+\beta y$ for $\alpha$, $\beta \neq 0$. If we have $n_y=n_j=0$ for all $1\leq j\leq q$ and $n_x=1$, then we have $f=x$. In these cases, the $\N$-graded commutative ring $R$ is regular. Otherwise, we have $f=x^{n_x}$ with $n_x>1$ and $(n_x-1)m<n$.
\end{proof}

We now suppose that the $a$-invariant of $R$ is negative and the stable category $\ct =\ul{\CM \!}_0^\Z R$ does not vanish. For any positive integer $i$, we define graded $R$-modules
\[
V^i=(k[x, y^{\pm 1}]/(x^{n_x}))(i)_{\geq 0} \: .
\]
By Theorem~1.6 of \cite{BuchweitzIyamaYamaura20}, the category $\ct$ contains a standard silting object $V=\bigoplus_{i=1}^{a+n}V^i$. To describe the dg endomorphism algebra of $V$, we introduce the following dg path algebra $(kQ', d)$.
\begin{definition} \label{def:dg path algebras}
The set $Q_0$ of the vertices of $Q$ is $\Z/n\Z$. For an even number $p>0$ and $i\in \Z/n\Z$, we define $\nu_p (i)=i+\frac{p}{2}n_x m$. For an odd number $p>0$ and $i\in \Z/n\Z$, we define $\nu_p (i)=i+(1+\frac{p-1}{2}n_x)m$. The set $Q_1$ of the arrows of $Q$ is
\[
\{ \beta_{p, i}\colon i\to \nu_p (i)\mid p>0, i\in \Z/n\Z \} \: ,
\]
where $\beta_{p, i}$ is of degree $1-p$. Let $(kQ, d)$ be the dg path algebra $kQ$ with the differential determined by
\begin{align*}
d(\beta_{p, i})= & \sum_{p_1+p_2=p, 2\mid p_1 p_2}(-1)^{p_2}\beta_{p_2, \nu_{p_1}(i)}\beta_{p_1, i} \\
 & +\sum_{p_1+\cdots +p_{n_x}=p+n_x-2, 2\nmid p_1 \ldots p_{n_x}}(-1)^{\frac{n_x(n_x-1)}{2}}\beta_{p_{n_x}, (\nu_{p_{n_x-1}}\circ \cdots \circ \nu_{p_1})(i)}\ldots \beta_{p_1, i} \: .
\end{align*}
Let $Q'$ be the graded quiver obtained from $Q$ by removing the vertices $0$, \ldots, $-a-1$ and the arrows which are adjacent to them. The differential $d$ induced by that of $(kQ, d)$ makes $kQ'$ into a dg algebra.
\end{definition}

\begin{theorem} \label{thm:negative}
Let $R$ and $(kQ', d)$ be as above. Denote the dg endomorphism algebra of the standard silting object $V$ in $\ul{\CM \!}_0^\Z R$ by $\Ga$.
\begin{itemize}
\item[a)] The dg algebra $\Ga$ is quasi-isomorphic to $(kQ', d)$. In particular, the stable category $\ul{\CM \!}_0^\Z R$ is triangle equivalent to $\per (kQ', d)$.
\item[b)] The dg algebra $\Ga$ is proper and Gorenstein.
\end{itemize}
\end{theorem}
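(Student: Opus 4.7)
The plan is to deduce part~(b) directly from Proposition~\ref{prop:properness and Gorensteinness}, and to prove part~(a) by matching $\Ga$ against a cofibrant dg replacement of a self-injective Nakayama algebra produced in Appendix~\ref{appendix:dg resolutions of self-injective Nakayama algebras}.

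For part~(b), since $V$ is a silting object of $\ct$, it generates $\ct$ as a thick subcategory, so Proposition~\ref{prop:properness and Gorensteinness} applied to $X = V$ shows that $\Ga$ is proper and Gorenstein.

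For part~(a), I would first compute the graded cohomology algebra $H^*(\Ga) = \bigoplus_p \Hom_\ct(V, \Si^p V)$. In the negative-$a$-invariant case with $\ct \neq 0$, Lemma~\ref{lem:negative} forces $f = x^{n_x}$, and the Eisenbud periodicity $\Si^2 \simeq (n_x m)$ for hypersurfaces lets me evaluate the Hom-spaces directly from projective resolutions of $V^i = (k[x, y^{\pm 1}]/(x^{n_x}))(i)_{\geq 0}$. I would then exhibit an explicit $k$-basis $\{\beta_{p,i}\}$, verify that the target of $\beta_{p,i}$ is the vertex $\nu_p(i)$ described in Definition~\ref{def:dg path algebras}---even $p$ comes from iterates of the Eisenbud shift by $n_x m$, while odd $p$ picks up an additional shift by $m$ produced by the matrix-factorization syzygy $(x^{n_x-1}, x)$---and conclude that the underlying graded algebras of $\Ga$ and $(kQ', d)$ coincide.

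Next, I would construct an explicit quasi-isomorphism $(kQ', d) \to \Ga$. The key tool is the cofibrant dg replacement of the self-injective Nakayama algebra $k[x]/(x^{n_x})$ produced in Appendix~\ref{appendix:dg resolutions of self-injective Nakayama algebras}, whose generators and differential have exactly the binary-plus-$n_x$-ary form of Definition~\ref{def:dg path algebras}: the binary term reflects the Yoneda product on Ext, while the $n_x$-ary term reflects the Massey product of order $n_x$ forced by the relation $x^{n_x} = 0$. I would transport its generators along this cofibrant replacement to cycles in $\Ga$, extend by the $y$-shift structure indexed by $\Z/n\Z$, and check that the resulting dg morphism respects the differential. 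By the previous step the induced map on cohomology is an isomorphism, so the morphism is a quasi-isomorphism.

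The main obstacle will be verifying that no higher $A_\infty$-products beyond the binary and $n_x$-ary ones contribute to the transferred structure on $H^*(\Ga)$, and that the signs $(-1)^{p_2}$ and $(-1)^{n_x(n_x-1)/2}$ in Definition~\ref{def:dg path algebras} are correct. This requires a bar-construction calculation with the $2$-periodic matrix factorization of $x^{n_x}$; the vanishing of intermediate Massey products, of order strictly between $2$ and $n_x$, should follow from the observation that every composable tuple of those lengths involves only the monomials $x$ and $x^{n_x-1}$, whose products become multiples of $f = x^{n_x}$ and hence boundaries in the resolution.
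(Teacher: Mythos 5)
Your handling of part~(b) matches the paper exactly: $V$ is a silting object, so it generates $\ct$ as a thick subcategory, and Proposition~\ref{prop:properness and Gorensteinness} applies with $X=V$.

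For part~(a), however, there is a genuine gap: you never invoke the crucial structural input, namely part~(b) of Theorem~1.6 of \cite{BuchweitzIyamaYamaura20}, which identifies the endomorphism dg algebra $\Ga$ of the standard silting object (in the negative $a$-invariant case) with the Drinfeld dg quotient of $\End_{\ct}(\bigoplus_{i=1}^n V^i)$ by $\End_{\ct}(\bigoplus_{i=a+n+1}^n V^i)$. This is the step that reduces a dg-level claim to an algebra-level computation. With it, the paper's argument is short: $\End_{\ct}(\bigoplus_{i=1}^n V^i)$ is the self-injective Nakayama algebra $N_{n, n_x}$ (by part~b) of Lemma~\ref{lem:morphism} and Lemma~\ref{lem:stable endomorphism}), Proposition~\ref{prop:cofibrant dg replacement of Nakayama algebras} provides its cofibrant model $(kQ, d)$, and passing to the dg quotient in this model simply deletes the vertices $0, \ldots, -a-1$, giving $(kQ', d)$. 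Your plan instead computes $H^*(\Ga)$ by hand, constructs an explicit dg morphism $(kQ', d) \to \Ga$, and then checks that no higher $A_\infty$-products interfere via a ``bar-construction calculation.'' This last step is precisely the hard part, and as sketched it amounts to re-deriving both Proposition~\ref{prop:cofibrant dg replacement of Nakayama algebras} and the dg-quotient identification from scratch; an isomorphism on cohomology plus a guessed dg morphism does not by itself yield a quasi-isomorphism unless the morphism is actually constructed, and the transferred $A_\infty$-structure on $H^*(\Ga)$ would have to be compared against that of $kQ'$ rather than merely observed to have binary and $n_x$-ary terms.

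Two smaller inaccuracies: the cofibrant model in Appendix~\ref{appendix:dg resolutions of self-injective Nakayama algebras} is for the cyclic Nakayama algebra $N_{n, n_x} = kC_n/(x^{n_x})$ on $n$ vertices, not for the local algebra $k[x]/(x^{n_x})$ as you write; the $\Z/n\Z$-indexing is intrinsic to $N_{n, n_x}$ and is not an extra ``$y$-shift structure'' to be bolted on afterwards. Also, the identification of $\End_{\ct}(\bigoplus_{i=1}^n V^i)$ with $N_{n, n_x}$ already packages the Eisenbud periodicity and the Hom-space computation you propose, so computing projective resolutions of each $V^i$ separately duplicates work that part~b) of Lemma~\ref{lem:morphism} does once.
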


\subsection{An example of Theorem~\ref{thm:negative}} \label{ss:an example of negative}

The following example serves to illustrate the construction of dg path algebras $(kQ', d)$. Let $k$ be an arbitrary field. For $R=k[x, y]/(x^3)$ with $\deg x=2$ and $\deg y=9$, we have $a=-5$. The graded quiver $Q$ is given as follows.
\[
\begin{tikzpicture}[>=Stealth, scale=3]
  \def\labels{{0,2,4,6,8,1,3,5,7}}

  \foreach \i [evaluate=\i as \label using {\labels[\i]}] in {0,...,8} {
    \node (A\i) at ({360/9*(\i)}:1) {\label};
  }

  \foreach \i in {0,...,8} {
    \pgfmathtruncatemacro{\j}{mod(\i+1,9)}
    \draw[->] (A\i) -- (A\j);
  }

  \foreach \i in {0,...,8} {
    \pgfmathtruncatemacro{\j}{mod(\i+3,9)}
    \pgfmathsetmacro{\dx}{cos(360/9*\j) - cos(360/9*\i)}
    \pgfmathsetmacro{\dy}{sin(360/9*\j) - sin(360/9*\i)}
    \pgfmathsetmacro{\len}{veclen(\dx,\dy)}
    \pgfmathsetmacro{\ox}{0.02*\dy/\len}
    \pgfmathsetmacro{\oy}{-0.02*\dx/\len}
    \draw[->, red, shorten >=0.25cm, shorten <=0.25cm] 
      ($ (A\i) + (\ox,\oy) $) -- 
      ($ (A\j) + (\ox,\oy) $);
  }

  \foreach \i in {0,...,8} {
    \pgfmathtruncatemacro{\j}{mod(\i+4,9)}
    \draw[->, yellow] (A\i) -- (A\j);
  }

  \foreach \i in {0,...,8} {
    \pgfmathtruncatemacro{\j}{mod(\i+6,9)}
    \pgfmathsetmacro{\dx}{cos(360/9*\j) - cos(360/9*\i)}
    \pgfmathsetmacro{\dy}{sin(360/9*\j) - sin(360/9*\i)}
    \pgfmathsetmacro{\len}{veclen(\dx,\dy)}
    \pgfmathsetmacro{\ox}{0.02*\dy/\len}
    \pgfmathsetmacro{\oy}{-0.02*\dx/\len}
    \draw[->, green, shorten >=0.25cm, shorten <=0.25cm] 
      ($ (A\i) + (\ox,\oy) $) -- 
      ($ (A\j) + (\ox,\oy) $);
  }

  \foreach \i in {0,...,8} {
    \pgfmathtruncatemacro{\j}{mod(\i+7,9)}
    \draw[->, blue] (A\i) -- (A\j);
  }

  \foreach \i in {0,...,8} {
    \pgfmathtruncatemacro{\outangle}{315 + 40*\i}
    \pgfmathtruncatemacro{\inangle}{45 + 40*\i}
    \draw[->, purple] (A\i) to[out=\outangle, in=\inangle, loop] (A\i);
  }
\end{tikzpicture}
\]
Here each arrow represents a family of arrows parametrized by $\N \setminus \{0\}$. The black arrows represent $\{ \beta_{6p-5, i} \mid p\geq 1, i\in Q_0 \}$, the red arrows represent $\{ \beta_{6p-4, i} \mid p\geq 1, i\in Q_0 \}$, the yellow arrows represent $\{ \beta_{6p-3, i} \mid p\geq 1, i\in Q_0 \}$, the green arrows represent $\{ \beta_{6p-2, i} \mid p\geq 1, i\in Q_0 \}$, the blue arrows represent $\{ \beta_{6p-1, i} \mid p\geq 1, i\in Q_0 \}$, and the purple loops represent $\{ \beta_{6p, i} \mid p\geq 1, i\in Q_0 \}$. The differentials of $\beta_{p, i}$ for small $p$ are given by
\begin{align*}
d(\beta_{1, i})= & 0 \: , \\
d(\beta_{2, i})= & -\beta_{1, i+4}\beta_{1, i+2}\beta_{1, i} \: , \\
d(\beta_{3, i})= & \beta_{2, i+2}\beta_{1, i}-\beta_{1, i+6}\beta_{2, i} \: , \\
d(\beta_{4, i})= & \beta_{2, i+6}\beta_{2, i}-\beta_{3, i+4}\beta_{1, i+2}\beta_{1, i}-\beta_{1, i+10}\beta_{3, i+2}\beta_{1, i}-\beta_{1, i+10}\beta_{1, i+8}\beta_{3, i} \: , \\
d(\beta_{5, i})= & \beta_{4, i+2}\beta_{1, i}-\beta_{3, i+6}\beta_{2, i}+\beta_{2, i+8}\beta_{3, i}-\beta_{1, i+12}\beta_{4, i} \: , \\
d(\beta_{6, i})= & \beta_{4, i+6}\beta_{2, i}+\beta_{2, i+12}\beta_{4, i}-\beta_{5, i+4}\beta_{1, i+2}\beta_{1, i}-\beta_{1, i+16}\beta_{5, i+2}\beta_{1, i}-\beta_{1, i+16}\beta_{1, i+14}\beta_{5, i} \\
 & -\beta_{3, i+10}\beta_{3, i+2}\beta_{1, i}-\beta_{3, i+10}\beta_{1, i+8}\beta_{3, i}-\beta_{1, i+16}\beta_{3, i+8}\beta_{3, i} \: .
\end{align*}
Let $Q'$ be the graded quiver obtained from $Q$ by removing the vertices $0$, $1$, $2$, $3$, and $4$ and the arrows which are adjacent to them. The differential $d$ induced by that of $(kQ, d)$ makes $kQ'$ into a dg algebra. The dg endomorphism algebra $\Ga$ of the standard silting object $V$ in $\ul{\CM \!}_0^\Z R$ is quasi-isomorphic to $(kQ', d)$.

\section{Proof of our results} \label{section:proof of our results}

\subsection{Structures of the morphism spaces} \label{ss:structures of the morphism spaces}

In this section, we suppose that the \linebreak $a$-invariant of $R$ is non-negative. We analyze the structures of the morphism spaces, radical morphism spaces, and the square of radical morphism spaces between direct summands of $V$ in the following lemmas. They will be used to prove Theorem~\ref{thm:non-negative}. We write the morphism spaces and radical morphism spaces in the full subcategory $\add V\subseteq \CM \!_0^\Z R$ by $\Hom_R(?, -)_0$ respectively $\rad_V(?, -)_0$.

\begin{lemma} \label{lem:additive generators}
For large enough integer $N$, the subcategory $\add (\bigoplus_{i=a+1}^N V^i) \subseteq \CM \!_0^\Z R$ coincides with $\add (V^x \oplus V^y \oplus \bigoplus_{j=1}^q V^{f_j})$. In particular, we have $\add (\bigoplus_{i=1}^N V^i)=\add V$.
\end{lemma}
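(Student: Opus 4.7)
The plan is to show that once $i \geq a+1$, each module $V^i$ splits as $V^{i, x}\oplus V^{i, y}\oplus \bigoplus_{j=1}^q V^{i, f_j}$, and that the summands $V^{i, x}$, $V^{i, y}$, $V^{i, f_j}$ are each periodic in $i$, up to isomorphism of graded $R$-modules, with periods $n$, $m$, and $1$ respectively. Granting this, for any $N \geq a+n$ the family $\bigl\{V^{i, x}, V^{i, y}, V^{i, f_j}\bigr\}_{a+1 \leq i \leq N}$ contains a representative of every isomorphism class that occurs, yielding $\add\bigl(\bigoplus_{i=a+1}^N V^i\bigr) = \add\bigl(V^x \oplus V^y \oplus \bigoplus_{j=1}^q V^{f_j}\bigr)$. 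Adjoining the summands $V^i$ for $1 \leq i \leq a$ then produces $\add(\bigoplus_{i=1}^N V^i) = \add V$, as required.

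For the decomposition, I would invoke the characterization of the $a$-invariant via local cohomology: the short exact sequence $0 \to R \to K \to K/R \to 0$ identifies $K/R$ with $H^1_{\mathfrak{m}}(R)$, whose top non-vanishing degree is precisely $a$. Hence $R_j = K_j$ for every integer $j > a$, and consequently $V^i = R(i)_{\geq 0} = K(i)_{\geq 0}$ whenever $i \geq a+1$. Splitting along the central idempotents $1_x$, $1_y$, and $1_j$ of the product decomposition $K = K^x \times K^y \times \prod_{j=1}^q K^j$ then yields the claimed splitting of $V^i$ into $V^{i, x}\oplus V^{i, y}\oplus \bigoplus_{j=1}^q V^{i, f_j}$.

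For the periodicities, I would exhibit explicit isomorphisms of graded $R$-modules given by multiplication by suitable homogeneous units. In $K^x = k[x, y^{\pm 1}]/(x^{n_x})$ the element $y$ is a homogeneous unit of degree $n$, so multiplication by $y$ gives a degree-zero isomorphism $V^{i, x} = K^x(i)_{\geq 0} \iso K^x(i+n)_{\geq 0} = V^{i+n, x}$. Symmetrically, multiplication by $x \in K^y$ yields $V^{i, y} \simeq V^{i+m, y}$. In $K^j = k[x^{\pm 1}, y^{\pm 1}]/(f_j^{n_j})$, the element $x^r y^s$ is a homogeneous unit of degree $1$ (by the choice $rm+sn = 1$), so multiplication by $x^r y^s$ yields $V^{i, f_j} \simeq V^{i+1, f_j}$. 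The only conceptual input is the identification $R_j = K_j$ for $j > a$, which is where the hypothesis on the $a$-invariant enters; the remainder is direct bookkeeping with indices, so I do not anticipate any serious obstacle.
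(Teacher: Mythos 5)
Your argument is correct, and it takes a genuinely different route from the paper. The paper's proof is essentially two citations: parts (a) and (b) of Lemma~4.11 and part~(c) of Theorem~2.1 of Buchweitz--Iyama--Yamaura, which deliver the splitting $V^i\cong V^{i,x}\oplus V^{i,y}\oplus\bigoplus_{j} V^{i,f_j}$ for $i\geq a+1$ and the stabilization of the additive closure for $N$ large. You unpack this into a self-contained derivation. Your key step, $R_j=K_j$ for $j>a$, does follow from $K/R\cong H^1_{\mathfrak m}(R)$ together with graded local duality (which gives $H^1_{\mathfrak m}(R)\cong (DR)(-a)$, concentrated in degrees $\leq a$), and your periodicity isomorphisms, multiplication by the homogeneous units $y\in K^x$, $x\in K^y$, $x^ry^s\in K^j$ of degrees $n$, $m$, $1$, are exactly the right maps; together with $m\leq n$ they justify the threshold $N\geq a+n$. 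This version has the virtue of making explicit where the hypothesis $a\geq 0$ and the graded units enter, at the cost of re-deriving facts the paper is content to import. One point worth spelling out if you were to write this up is the identification $K/R\cong H^1_{\mathfrak m}(R)$ itself: one needs that $K/R$ is $\mathfrak m$-torsion (true, since $K_{\mathfrak p}=R_{\mathfrak p}$ for every non-maximal prime $\mathfrak p$, so each cyclic submodule of $K/R$ is supported only at $\mathfrak m$) and that $H^0_{\mathfrak m}(K)=H^1_{\mathfrak m}(K)=0$ (true because any non-zero-divisor $t\in\mathfrak m$ is already invertible on $K$, so the two-term complex $K\to K_t$ is the identity); with these in hand the long exact sequence of local cohomology collapses to the claimed isomorphism.
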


\begin{proof}
By part~(a) of Lemma~4.11 of \cite{BuchweitzIyamaYamaura20} and part~(c) of Theorem~2.1 of \cite{BuchweitzIyamaYamaura20}, the object $V^i$ is isomorphic to $V^{i, x}\oplus V^{i, y}\oplus \bigoplus_{j=1}^q V^{i, f_j}$ for all $i\geq a+1$. Since $N$ is large enough, by part~(b) of Lemma~4.11 of \cite{BuchweitzIyamaYamaura20}, we have
\[
\add (\bigoplus_{i=a+1}^N (V^{i, x}\oplus V^{i, y}\oplus \bigoplus_{j=1}^q V^{i, f_j}))=\add (V^x \oplus V^y \oplus \bigoplus_{j=1}^q V^{f_j}) \: .
\]
Then the assertions follow.
\end{proof}

\begin{lemma} \label{lem:stable endomorphism}
We have the isomorphism $\End_R (V)_0 \iso \End_{\ct}(V)$ of algebras.
\end{lemma}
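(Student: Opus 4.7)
The map $\End_R(V)_0 \to \End_\ct(V)$ is the canonical projection onto the quotient by the ideal of morphisms factoring through projective-injective objects of the Frobenius category $\CM_0^\Z R$, namely the graded-free $R$-modules. Surjectivity is automatic. The plan for injectivity is to show that, for any two indecomposable direct summands $W, W'$ of $V$ (each of the form $V^i$, $V^{i,x}$, $V^{i,y}$, or $V^{i,f_j}$) and every integer $\ell$, the composition
\[
\Hom_R(W, R(\ell))_0 \otimes_k \Hom_R(R(\ell), W')_0 \longrightarrow \Hom_R(W, W')_0
\]
vanishes; by Krull--Schmidt this suffices.

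A degree count reduces the problem to $\ell=0$. Every indecomposable summand of $V$ described in Section~\ref{ss:structure theorem for non-negative a-invariants} is concentrated in shifted degrees $\geq 0$ with its degree-$0$ generators lying in original-degree $\geq 1$. Any nonzero $h \colon R(\ell) \to W'$ of shifted-degree zero is determined by $h(1_R) \in W'_{-\ell}$, forcing $\ell \leq 0$. Any nonzero $g \colon W \to R(\ell)$ of shifted-degree zero sends $W_0$ into $R(\ell)_0 = R_\ell$, and since $R$ is $\N$-graded this forces $\ell \geq 0$. Hence only $\ell=0$ can yield a nontrivial contribution.

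It remains to show $\Hom_R(W, R)_0 = 0$ for each summand $W$. For $W=V^i$ with $1 \leq i \leq a$, such a morphism $g$ restricts to a $k$-linear map $\bar g \colon R_i \to R_0 = k$ that extends $R$-linearly. Given distinct monomials $r_1 = x^{a_1} y^{b_1}$ and $r_2 = x^{a_2} y^{b_2}$ in $R_i$ with $a_1 > a_2$ and $b_2 > b_1$ (necessarily with $(a_1-a_2)m = (b_2-b_1)n$ by coprimality of $m,n$), the identity $r_1 \cdot y^{b_2-b_1} = r_2 \cdot x^{a_1-a_2}$ in $R$ forces $\bar g(r_1)\, y^{b_2-b_1} = \bar g(r_2)\, x^{a_1-a_2}$, and the $k$-linear independence of the monomials $x^{a_1-a_2}$ and $y^{b_2-b_1}$ in $R$ (in a range of degrees below which the relation $f$ interferes) yields $\bar g(r_1) = \bar g(r_2) = 0$. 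For $W = V^{i,x}, V^{i,y}, V^{i,f_j}$ the argument is analogous, additionally using that $W$ is annihilated by a power of $x$, $y$ or $f_j$, whose annihilator in $R$ has vanishing degree-$0$ component because $R_0 = k$ and the annihilator is a positive-degree ideal. The main technical obstacle is this last step: one must verify uniformly, across all the described summands, the vanishing of $\Hom_R(W, R)_0$, which requires combining the $k$-linear independence of monomials in the appropriate degrees of $R = k[x,y]/(f)$ with the explicit descriptions of $V^{i,x}$, $V^{i,y}$ and $V^{i,f_j}$ recorded in the preceding lemma and Lemma~4.11 of \cite{BuchweitzIyamaYamaura20}.
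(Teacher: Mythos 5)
Your overall strategy is sound and genuinely different from the paper's: you attempt a self-contained argument showing no morphism between indecomposable summands of $V$ factors through a graded-free module, whereas the paper simply cites Lemma~4.11 and Lemma~4.13(b) of \cite{BuchweitzIyamaYamaura20}, which package exactly the required vanishings. However, there are two real gaps.

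First, the reduction to $\ell=0$ is not justified. You argue that a nonzero degree-$0$ map $g\colon W\to R(\ell)$ ``sends $W_0$ into $R_\ell$, forcing $\ell\geq 0$.'' But the summands $W=V^i$, $V^{i,x}$, $V^{i,y}$, $V^{i,f_j}$ are not generated in shifted degree $0$ (and $W_0$ may even vanish, e.g.\ $V^1_0=R_1$ can be $0$ under non-standard grading); the stated inclusion only gives $g(W_0)=0$ when $\ell<0$, not $g=0$. What is actually needed is $\Hom_R(W,R(\ell))_0=0$ for \emph{all} $\ell\le 0$, and this does not follow from a naive degree count: one way to get it is the exact sequence $0\to W\to R(i)\to T\to 0$ (resp.\ using that $W$ is a direct summand of $R(i)_{\geq 0}$ when $i\geq a+1$) together with graded local duality, giving $\Hom_R(T,R(\ell))=0$ (since $T$ has finite length and $R(\ell)$ has depth $1$), $\Hom_R(R(i),R(\ell))_0=R_{\ell-i}=0$, and $\Ext^1_R(T,R(\ell))_0\cong D(T_{a-\ell})=0$ because $T$ lives in degrees $[-i,0)$ and $a-\ell\ge 0$. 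This is precisely the content bundled into the cited Lemma~4.13 of \cite{BuchweitzIyamaYamaura20}, not something one gets for free.

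Second, even at $\ell=0$ your argument that $\Hom_R(W,R)_0=0$ is only sketched. The monomial-independence reasoning is plausible for $W=V^i$ but breaks for $W=V^{i,x}, V^{i,y}, V^{i,f_j}$, where $W$ is a localization/quotient with infinitely many monomials in low degrees and the relation $f$ interacts nontrivially; you acknowledge this explicitly (``the main technical obstacle is this last step''), so the proof is not complete. In short, the approach is reasonable but the degree reduction is flawed and the key vanishing is left unproved, whereas the paper's one-line proof outsources both to \cite{BuchweitzIyamaYamaura20}.
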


\begin{proof}
The assertion follows by Lemma~\ref{lem:additive generators} and part~(b) of Lemma~4.13 of \cite{BuchweitzIyamaYamaura20}.
\end{proof}

Consider the full subquivers $Q^x$, $Q^y$, and $Q^j \subseteq Q$ with vertex sets $\{(i, x)\mid 0\leq i<n \}$, $\{(i, y)\mid 0\leq i<m\}$, and $\{(0, j)\}$, respectively.

\begin{lemma} \label{lem:morphism}\mbox{}
\begin{itemize}
\item[a)] For any $1\leq i$, $i'\leq a$, we have a bijection $\Hom_R (V^i, V^{i'})_0\iso R_{i'-i}$.
\item[b)] We have an isomorphism
\[
\End_R(V^x)_0 \xlongrightarrow{_\sim} kQ^x/(b_x^{n_x})
\]
of algebras. For any $a+1\leq i\leq a+n$ and positive integer $t$ satisfying $i-tn\geq 1$, the morphism $\cdot y^t \colon V^{i-tn} \to V^{i, x}$ induces the bijection
\[
\begin{tikzcd}
\Hom_R(V^{i, x}, V^x)_0 \arrow{r}{\sim} & \Hom_R(V^{i-tn}, V^x)_0 \: .
\end{tikzcd}
\]
\item[c)] We have an isomorphism
\[
\End_R(V^y)_0 \xlongrightarrow{_\sim} kQ^y/(b_y^{n_y})
\]
of algebras. For any $a+1\leq i\leq a+m$ and positive integer $t$ satisfying $i-tm\geq 1$, the morphism $\cdot x^t \colon V^{i-tm} \to V^{i, y}$ induces the bijection
\[
\begin{tikzcd}
\Hom_R(V^{i, y}, V^y)_0 \arrow{r}{\sim} & \Hom_R(V^{i-tm}, V^y)_0 \: .
\end{tikzcd}
\]
\item[d)] For any $1\leq j\leq q$, we have an isomorphism
\[
\End_R(V^{f_j})_0 \xlongrightarrow{_\sim} kQ^j/(g_j(b_j)^{n_j})
\]
of algebras. For any $1\leq j\leq q$ and $1\leq t\leq a$, the morphism \mbox{$\cdot (x^r y^s)^t \colon V^{a+1-t} \to V^{f_j}$} induces the bijection
\[
\begin{tikzcd}
\End_R(V^{f_j})_0 \arrow{r}{\sim} & \Hom_R(V^{a+1-t}, V^{f_j})_0 \: .
\end{tikzcd}
\]
\item[e)] We have
\begin{align*}
\Hom_R(V^x, V^{[1, a]}\oplus V^y \oplus \bigoplus_{j=1}^q V^{f_j})_0 & =0\mbox{ and}\\
\Hom_R(V^y, V^{[1, a]}\oplus V^x \oplus \bigoplus_{j=1}^q V^{f_j})_0 & =0 \: .
\end{align*}
For any $1\leq j\leq q$, we have
\[
\Hom_R(V^{f_j}, V^{[1, a]}\oplus V^x \oplus V^y \oplus \bigoplus_{j'\neq j} V^{f_{j'}})_0 =0 \: .
\]
\item[f)] For any $1\leq i\leq a$, all morphisms in $\rad_V (V^i, V)_0$ factor through the morphism $\cdot x \colon V^i \to V^{i+m}$ or the morphism $\cdot y \colon V^i \to V^{i+n}$.
\end{itemize}
\end{lemma}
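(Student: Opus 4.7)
All six parts describe the morphism structure of $\add V \subseteq \CM\!_0^\Z R$, and I would prove them by a single uniform mechanism. Each indecomposable summand of $V$ is a graded $R$-submodule of a \emph{canonical ambient module} with finite-length cokernel supported at the irrelevant ideal: $V^i \subseteq R(i)$, $V^{i,x} \subseteq K^x(i)$, $V^{i,y} \subseteq K^y(i)$, and $V^{a+1,f_j} \subseteq K^j(a+1)$. Since $R$, $K^x$, $K^y$, $K^j$ are all maximal Cohen--Macaulay (hence torsion-free) as $R$-modules, every degree-$0$ morphism between summands of $V$ extends uniquely to a degree-$0$ morphism between the corresponding ambient modules. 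As these ambient rings are free of rank one over themselves, such extensions are precisely multiplications by homogeneous elements of the target ring (using $R \hookrightarrow K^x, K^y, K^j$ and the zero map between distinct simple components of the total quotient ring $K = K^x \times K^y \times \prod K^j$). The $\Hom$ space is then cut out by requiring multiplication to preserve the $(\geq 0)$-truncation.

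With this mechanism in hand, each part reduces to a direct calculation. For (a), every $r \in R_{i'-i}$ has nonnegative degree so multiplication preserves truncation, giving the bijection. For (b), one identifies the arrow $b_{i,x}$ with multiplication by $\frac{x}{y^{\lfloor (i+m)/n \rfloor}} \in K^x$; the relation $b_x^{n_x} = 0$ holds because $x^{n_x} = 0$ in $K^x$; the bijection via $\cdot y^t$ uses that $y$ is a unit in $K^x$, so precomposition is invertible on $\Hom_R(-, V^x)_0$. Part (c) is verbatim symmetric under $x \leftrightarrow y$, $m \leftrightarrow n$, $n_x \leftrightarrow n_y$. For (d), $\End_R(V^{f_j})_0 = K^j_0 \cong k[b_j]/(g_j(b_j)^{n_j})$ after substituting $b_j = x^n/y^m$ and using the definition $g_j(t) = f_j(t^{1/n}, 1)$ together with $f_j^{n_j} = 0$ in $K^j$; the bijection via $\cdot (x^r y^s)^t$ uses that $x^r y^s$ is a degree-$1$ unit in $K^j$. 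For (e), morphisms between summands whose ambients live on distinct components of $K$ must vanish, since there are no nonzero ring morphisms between distinct simple components.

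For (f), I would argue case by case. A radical morphism $V^i \to V^{i'}$ (with $i, i' \leq a$) is multiplication by a positive-degree $r \in R$, and since $R$ is generated in positive degrees by $x$ and $y$, $r$ lies in the ideal $(x,y)$; thus the morphism factors through $(\cdot x, \cdot y) \colon V^i \to V^{i+m} \oplus V^{i+n}$. A radical morphism $V^i \to V^{i', x}$ factors by (b) as $V^i \xrightarrow{\cdot y^t} V^{i+tn, x} \to V^{i', x}$ for some $t \geq 1$ (forced by $i \leq a < a+1 \leq i+tn$), hence factors through $\cdot y$; symmetrically, morphisms to $V^{i', y}$ factor through $\cdot x$. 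A morphism $V^i \to V^{f_j}$ factors as $V^i \xrightarrow{\cdot (x^r y^s)^{a+1-i}} V^{f_j} \to V^{f_j}$ by (d); using $f_j^{n_j} = 0$ in $K^j$ and $rm + sn = 1$, one rewrites $(x^r y^s)^{a+1-i}$ as a product beginning with a factor of $x$ or $y$, giving the desired factorization.

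The main obstacle I expect is the explicit quiver-algebra identification in (b) (and symmetrically (c)): the combinatorial bookkeeping that matches the cyclic arithmetic modulo $n$ against the $(\geq 0)$-truncation, computing the $k$-dimensions of $\Hom_R(V^{i,x}, V^{i', x})_0$ in each range, and verifying that exactly $n_x$ consecutive $b_x$-arrows compose to zero while shorter compositions do not. The rewrite in the final case of (f), factoring through $\cdot x$ or $\cdot y$ when $r$ or $s$ is negative, is also subtle and requires explicit manipulation in $K^j$ modulo $f_j^{n_j}$.
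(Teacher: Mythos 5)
Your "ambient module'' framework captures the same underlying idea as the paper's proof — identify each $\Hom$ space with a space of homogeneous multiplications and then read off the structure — and for parts a)--d) and f) it tracks the paper's argument fairly closely (the paper computes $\End_R(V^x)_0$, etc.\ directly as multiplications and factors morphisms out of $V^{i-tn}$ through $V^{i-tn,x}$ by the universal properties of localization). However, there is a genuine gap in part e). Your argument only treats the ``off-diagonal'' vanishings $\Hom_R(V^x, V^y)_0$, $\Hom_R(V^x, V^{f_j})_0$, and the like, where the ambients sit on distinct simple factors of $K$. It does not address $\Hom_R(V^x, V^{[1,a]})_0 = 0$, $\Hom_R(V^y, V^{[1,a]})_0 = 0$, and $\Hom_R(V^{f_j}, V^{[1,a]})_0 = 0$, which are also asserted. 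For $i' \le a$ the ambient of $V^{i'}$ is $R(i')$, and $R$ maps with nonzero image into \emph{every} simple factor of $K$, so the ``distinct components'' observation simply does not apply. Nor does an annihilator argument close the gap: $R$ itself contains nonzero elements killed by $x^{n_x}$ (for instance the class of $y^{n_y}\prod_j f_j^{n_j}$), so the image of a map $V^{i,x}\to V^{i'}$ is not forced to vanish on those grounds. The paper disposes of this case by invoking Lemma~4.13(a) of Buchweitz--Iyama--Yamaura; you would need that input, or else a careful degree/conductor computation showing that no nonzero $r\in K^x_{i'-i}$ carries $V^{i,x}$ into the image of $V^{i'}$ inside $K^x(i')$ while keeping the $K^y$- and $K^j$-components zero.

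A secondary issue is the opening claim of your mechanism: the cokernel of $V^{i,x}\subseteq K^x(i)$ (and likewise for $K^y$, $K^j$) is concentrated in negative degrees but is \emph{not} finite-length, because $K^x$ contains $y^{-t}$ of arbitrarily negative degree. It is $\mathfrak{m}$-torsion, which is what really matters, but then the extension-existence step requires a graded $\Ext^1$-degree argument (using $i'\le a$ together with local duality over the one-dimensional Gorenstein ring $R$) rather than a finiteness argument; torsion-freeness of the target only gives uniqueness of the extension, not existence. As written, the passage from a morphism between truncations to a morphism between the full ambients is asserted rather than established, and that assertion is precisely where the $\Hom_R(V^x,V^{[1,a]})_0$ computation would otherwise have had to carry the load.
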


\begin{proof}
a) It is clear.

b) In the definition
\[
V^x=\bigoplus_{i=a+1}^{a+n}V^{i, x}=\bigoplus_{i=a+1}^{a+n}(k[x, y^{\pm 1}]/(x^{n_x}))(i)_{\geq 0} \: ,
\]
the index $i$ runs from $a+1$ to $a+n$. Since the invertible element $y$ is of degree $n$, the morphism space $\Hom_R(V^{i, x}, V^x)_0$ is spanned by the morphisms given by multiplying non-negative powers of $x$ and the uniquely determined non-negative powers of $y^{-1}$. So the algebra $\End_R(V^x)_0$ is isomorphic to $kQ^x/(b_x^{n_x})$. We now prove the second assertion. By the universal properties of localizations and quotients, we have the bijection
\[
\begin{tikzcd}
\Hom_R (V^{i-tn, x}, V^x)_0 \arrow{r}{\sim} & \Hom_R (V^{i-tn}, V^x)_0 \: .
\end{tikzcd}
\]
Since the morphism
\[
\cdot y^t \colon V^{i-tn, x} \iso V^{i, x}
\]
is an isomorphism, we have the bijection
\[
\begin{tikzcd}
\Hom_R (V^{i, x}, V^x)_0 \arrow{r}{\sim} & \Hom_R (V^{i-tn, x}, V^x)_0 \: .
\end{tikzcd}
\]
Then the assertion follows from the above bijections.

c) Similar to part~b).

d) Since the morphism space $\End_R(V^{f_j})_0$ is spanned by the morphisms given by multiplying non-negative powers of $f_j$ and the uniquely determined positive powers of $y^{-1}$, the algebra $\End_R(V^{f_j})_0$ is isomorphic to $kQ^j/(g_j(b_j)^{n_j})$. We now prove the second assertion. By the universal properties of localizations and quotients, we have the bijection
\[
\begin{tikzcd}
\Hom_R (V^{a+1-t, f_j}, V^{f_j})_0 \arrow{r}{\sim} & \Hom_R (V^{a+1-t}, V^{f_j})_0 \: .
\end{tikzcd}
\]
Since the morphism
\[
\cdot (x^r y^s)^t \colon V^{a+1-t, f_j}\iso V^{a+1, f_j}
\]
is an isomorphism, we have the bijection
\[
\End_R (V^{f_j})_0 \xlongrightarrow{_\sim} \Hom_R (V^{a+1-t, f_j}, V^{f_j})_0 \: .
\]
Then the assertion follows from the above bijections.

e) Since $V^x$ is annihilated by $x^{n_x}$ but $V^y$ and $V^{f_j}$ do not contain a nonzero element annihilated by $x^{n_x}$, we have $\Hom_R (V^x, V^y \oplus \bigoplus_{j=1}^q V^{f_j})_0 =0$. By Lemma~\ref{lem:additive generators} and part~(a) of Lemma~4.13 of \cite{BuchweitzIyamaYamaura20}, we have $\Hom_R (V^x, V^{[1, a]})=0$. So the first assertion follows. The proofs of the other assertions are similar.

f) This is a consequence of parts~a), b), c), and d).
\end{proof}

\begin{lemma} \label{lem:radical}\mbox{}
\begin{itemize}
\item[a)] The radical of $\End_R (V^{[1, a]})_0$ is generated by the morphisms in $\Hom_R (V^i, V^{[1, a]})_0$ given by multiplying $x$ or $y$.
\item[b)] The radical of $\End_R (V^x)_0$ is generated by the morphisms in $\Hom_R (V^{i, x}, V^x)_0$ given by multiplying $x$ and the uniquely determined non-negative powers of $y^{-1}$.
\item[c)] The radical of $\End_R (V^y)_0$ is generated by the morphisms in $\Hom_R (V^{i, y}, V^y)_0$ given by multiplying $y$ and the uniquely determined non-negative powers of $x^{-1}$.
\item[d)] For any $1\leq j\leq q$, the radical of $\End_R (V^{f_j})_0$ is generated by the morphism given by multiplying $f_j$ and the uniquely determined positive power of $y^{-1}$.
\item[e)] The object $V$ is basic. In particular, all morphisms between different indecomposable direct summands of $V$ are radical morphisms.
\end{itemize}
\end{lemma}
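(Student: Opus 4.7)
The plan is to deduce each of parts~a)--d) from the corresponding item of Lemma~\ref{lem:morphism} and then derive part~e) as a consequence. For part~a), I use Lemma~\ref{lem:morphism}(a) to identify $\End_R(V^{[1,a]})_0$ with the ``upper-triangular'' matrix algebra whose $(i,i')$-entry is $R_{i'-i}$. Since $R$ is $\N$-graded with $R_0=k$, the diagonal entries are copies of $k$ and the entries with $i'<i$ vanish, so the idempotents corresponding to the $V^i$ form a complete set of primitive orthogonal idempotents and the Jacobson radical is $\bigoplus_{i<i'} R_{i'-i}$. Each element of $R_d$ with $d>0$ is a $k$-linear combination of monomials $x^a y^b$ with $am+bn=d>0$, and every such monomial is divisible by $x$ or by $y$, whence the claimed generation.

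For parts~b) and~c), I invoke the isomorphisms $\End_R(V^x)_0\cong kQ^x/(b_x^{n_x})$ and $\End_R(V^y)_0\cong kQ^y/(b_y^{n_y})$ of Lemma~\ref{lem:morphism}(b),~(c). These are truncated cyclic Nakayama algebras whose Jacobson radicals are generated by the arrows $b_{i,x}$ and $b_{i,y}$, and the explicit descriptions of $\phi_{r,s}(b_{i,x})$ and $\phi_{r,s}(b_{i,y})$ stated after Theorem~\ref{thm:non-negative} show that these arrows are precisely multiplication by $x$ (respectively $y$) together with the unique non-negative power of $y^{-1}$ (respectively $x^{-1}$) that makes the morphism preserve degree. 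For part~d), Lemma~\ref{lem:morphism}(d) gives $\End_R(V^{f_j})_0\cong k[b_j]/(g_j(b_j)^{n_j})$, where $b_j$ represents multiplication by $x^n/y^m$. Because the $x$-exponent in every term of $f_j$ is a multiple of $n$, one has the identity $f_j(x,y)=y^{d_j'}g_j(x^n/y^m)$ with $d_j'=(\deg f_j)/n$, and hence multiplication by $f_j$ combined with division by $y^{d_j'}$ realizes multiplication by $g_j(b_j)$; the indecomposability of $V^{f_j}$ forces $k[b_j]/(g_j(b_j)^{n_j})$ to be local, and its Jacobson radical is then $(g_j(b_j))$.

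For part~e), to prove that $V$ is basic it suffices to show that its listed indecomposable summands are pairwise non-isomorphic. Lemma~\ref{lem:morphism}(e) rules out isomorphisms between summands lying in different blocks among $\{V^{[1,a]},V^x,V^y,V^{f_1},\ldots,V^{f_q}\}$, as morphisms flow in at most one direction. Within each block, parts~a)--d) present the endomorphism algebra as a basic algebra whose primitive idempotents are indexed precisely by the summands listed. The last clause of~e) is then immediate: any morphism between non-isomorphic indecomposables with local endomorphism rings is non-invertible and therefore lies in the radical.

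The delicate step I expect lies in part~d): identifying the Jacobson radical of $k[b_j]/(g_j(b_j)^{n_j})$ as exactly $(g_j(b_j))$ rests on the locality of this ring, which in turn rests on the indecomposability of $V^{f_j}$ (equivalently on the fact that the irreducibility of $f_j$ in $k[x,y]$ forces $g_j$ to be, up to units, a power of a single irreducible polynomial in $k[t]$). The remaining material is a routine unpacking of Lemma~\ref{lem:morphism} combined with standard facts about radicals of Nakayama algebras.
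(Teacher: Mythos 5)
Your reductions for parts~a), b), c), e) are fine and follow the paper's route: each is deduced from the corresponding item of Lemma~\ref{lem:morphism}, after which one observes that the candidate ideal is nilpotent and the quotient is a product of copies of $k$.

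For part~d), however, there is a genuine gap. You argue that indecomposability of $V^{f_j}$ gives locality of $k[b_j]/(g_j(b_j)^{n_j})$, and then assert that locality yields that the Jacobson radical is $(g_j(b_j))$; in your closing paragraph you further identify locality with ``$g_j$ is, up to units, a power of a single irreducible in $k[t]$''. But locality (equivalently, $g_j$ being a prime power) does \emph{not} determine the radical as $(g_j(b_j))$: if $g_j = p^e$ with $p$ irreducible and $e > 1$, then $k[b_j]/(g_j^{n_j}) = k[b_j]/(p^{e n_j})$ is local with radical $(p)$, which strictly contains $(g_j) = (p^e)$. What you actually need is that $g_j$ is irreducible (so $e=1$), equivalently that $k[t]/(g_j)$ is a field; this is also what makes the paper's ``nilpotent ideal with semisimple quotient'' argument for b) transfer to d). The irreducibility of $g_j$ does follow from the irreducibility of $f_j$, but this requires an argument you have not supplied. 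One way: set $u = x^n/y^m$, so that $(k[x^{\pm 1}, y^{\pm 1}])_0 = k[u^{\pm 1}]$ and $f_j = y^{d_j'} g_j(u)$ is an associate of $g_j(u)$ in $k[x^{\pm 1}, y^{\pm 1}]$. Since $f_j$ is irreducible in $k[x,y]$ and coprime to $x$ and $y$, it stays irreducible in $k[x^{\pm 1}, y^{\pm 1}]$; and because $g_j(0) = f_j(0,1) \neq 0$, any nontrivial factorization $g_j = h\ell$ in $k[t]$ has $h(0), \ell(0) \neq 0$, so neither factor becomes a unit in $k[u^{\pm 1}] \subset k[x^{\pm 1}, y^{\pm 1}]$, producing a nontrivial factorization of $f_j$ there --- a contradiction. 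You should add this (or a reference for it) to close the gap.
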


\begin{proof}
a) This follows from part~a) of Lemma~\ref{lem:morphism}.

b) The ideal of $\End_R (V^x)_0$, generated by the morphisms in $\Hom_R (V^{i, x}, V^x)_0$ given by multiplying $x$ and the uniquely determined non-negative powers of $y^{-1}$, is nilpotent. The quotient of $\End_R (V^x)_0$ by this ideal is isomorphic to the $n$th products of copies of $k$. Therefore, this ideal equals the radical of $\End_R (V^x)_0$.

c) Similar to part~b).

d) Similar to part~b).

e) The assertion follows from parts~a), b), c), d) and e) of Lemma~\ref{lem:morphism}.
\end{proof}

\begin{lemma} \label{lem:radical square}\mbox{}
\begin{itemize}
\item[a)] The square of the radical of $\End_R (V^{[1, a]})_0$ is generated by the morphisms in \linebreak $\Hom_R (V^i, V^{[1, a]})_0$ given by multiplying $x^2$, $xy$, or $y^2$.
\item[b)] The square of the radical of $\End_R (V^x)_0$ is generated by the morphisms in \linebreak $\Hom_R (V^{i, x}, V^x)_0$ given by multiplying $x^2$ and the uniquely determined non-negative powers of $y^{-1}$.
\item[c)] The square of the radical of $\End_R (V^y)_0$ is generated by the morphisms in \linebreak $\Hom_R (V^{i, y}, V^y)_0$ given by multiplying $y^2$ and the uniquely determined non-negative powers of $x^{-1}$.
\item[d)] For any $1\leq j\leq q$, the square of the radical of $\End_R (V^{f_j})_0$ is generated by the morphism given by multiplying $f_j ^2$ and the uniquely determined positive power of $y^{-1}$.
\item[e)] For any $a+1+\max \{n-a, 0\}\leq i\leq a+n$, the morphism $\cdot y \colon V^{i-n} \to V^{i, x}$ induces the bijection
\[
\begin{tikzcd}
\rad_V (V^{i, x}, V^x)_0 \arrow{r}{\sim} & \rad_V ^2 (V^{i-n}, V^x)_0 \: .
\end{tikzcd}
\]
For any $a+1\leq i\leq a+n$ and integer $t$ satisfying $t\geq 2$ and $i-tn\geq 1$, the morphism $\cdot y^t \colon V^{i-tn} \to V^{i, x}$ induces the bijection
\[
\begin{tikzcd}
\Hom_R (V^{i, x}, V^x)_0 \arrow{r}{\sim} & \rad_V ^2 (V^{i-tn}, V^x)_0 \: .
\end{tikzcd}
\]
\item[f)] For any $a+1+\max \{m-a, 0\}\leq i\leq a+m$, the morphism $\cdot x \colon V^{i-m} \to V^{i, y}$ induces the bijection
\[
\begin{tikzcd}
\rad_V (V^{i, y}, V^y)_0 \arrow{r}{\sim} & \rad_V ^2 (V^{i-m}, V^y)_0 \: .
\end{tikzcd}
\]
For any $a+1\leq i\leq a+m$ and integer $t$ satisfying $t\geq 2$ and $i-tm\geq 1$, the morphism $\cdot x^t \colon V^{i-tm} \to V^{i, y}$ induces the bijection
\[
\begin{tikzcd}
\Hom_R (V^{i, y}, V^y)_0 \arrow{r}{\sim} & \rad_V ^2 (V^{i-tm}, V^y)_0 \: .
\end{tikzcd}
\]
\item[g)] For any $a+1+\max \{m-a, 0\}\leq i\leq a+m$, the morphism
\[
\cdot (x^r y^s)^{a+m+1-i} \colon V^{i-m} \to V^{f_j}
\]
induces the bijection
\[
\begin{tikzcd}
\rad_V (V^{f_j}, V^{f_j})_0 \arrow{r}{\sim} & \rad_V ^2 (V^{i-m}, V^{f_j})_0 \: .
\end{tikzcd}
\]
For any $a+1\leq i\leq a+m$ and integer $t$ satisfying $t\geq 2$ and $i-tm\geq 1$, the morphism
\[
\cdot x^{t-1}(x^r y^s)^{a+m+1-i} \colon V^{i-tm} \to V^{f_j}
\]
induces the bijection
\[
\begin{tikzcd}
\End_R (V^{f_j})_0 \arrow{r}{\sim} & \rad_V ^2 (V^{i-tm}, V^{f_j})_0 \: .
\end{tikzcd}
\]
\end{itemize}
\end{lemma}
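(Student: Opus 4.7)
The plan is to derive parts~a)--d) directly from Lemma~\ref{lem:radical} and to derive parts~e)--g) by combining the full-Hom bijections of parts~b)--d) of Lemma~\ref{lem:morphism} with explicit factorizations of the multiplication maps $\cdot y^t$, $\cdot x^t$ and $\cdot (x^r y^s)^t$ as compositions of radical morphisms.

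For parts~a)--d), I would observe that the square of a nilpotent radical is spanned by pairwise compositions of generators of the radical. Lemma~\ref{lem:radical} identifies these generators as $\cdot x, \cdot y$ (for $V^{[1,a]}$), as $\cdot x$ with a compensating non-negative power of $y^{-1}$ (for $V^x$), as $\cdot y$ with a compensating non-negative power of $x^{-1}$ (for $V^y$), and as $\cdot f_j$ with a compensating positive power of $y^{-1}$ (for $V^{f_j}$). Pairwise composition then produces precisely the monomials $x^2, xy, y^2$ (and their localized analogues) claimed as generators. Commutativity of $R$ ensures no asymmetry between $xy$ and $yx$.

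For parts~e)--g) in the case $t \geq 2$, the key observation is that the map $\cdot y^t$ (respectively $\cdot x^t$, or $\cdot x^{t-1}(x^r y^s)^{a+m+1-i}$) already factors as a composition of at least two radical arrows. Hence composing any morphism $V^{i,x} \to V^x$ with $\cdot y^t$ automatically produces a morphism in $\rad_V^2(V^{i-tn}, V^x)_0$, and the bijection of Lemma~\ref{lem:morphism}~b) (respectively c), d)) on full Hom spaces then restricts to the claimed bijection onto $\rad_V^2$. The delicate case is $t=1$. Here $\cdot y$ is itself a single radical step, so precomposition sends $\rad_V(V^{i,x}, V^x)_0$ into $\rad_V^2(V^{i-n}, V^x)_0$, which gives injectivity and containment of the image. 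For surjectivity, I would take any $f \in \rad_V^2(V^{i-n}, V^x)_0$ and write $f = r_2 \circ r_1$ with both factors radical; by Lemma~\ref{lem:morphism}~f), the starting arrow $r_1$ is either $\cdot x \colon V^{i-n} \to V^{i-n+m}$ or $\cdot y \colon V^{i-n} \to V^i$. When $r_1 = \cdot y$, the decomposition $V^i \iso V^{i,x} \oplus V^{i,y} \oplus \bigoplus_j V^{i,f_j}$ of Lemma~\ref{lem:additive generators} combined with the vanishings in Lemma~\ref{lem:morphism}~e) reduces $r_2$ to its $V^{i,x}$-component, supplying the required radical $g$. When $r_1 = \cdot x$, commutativity $xy = yx$ together with a second application of the Hom-bijection of Lemma~\ref{lem:morphism}~b) to $r_2$ permute the factorization back into the $\cdot y$ form.

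The main obstacle I anticipate is precisely this $t=1$ surjectivity: one must verify that passing through the direct-sum decomposition at $V^i$ and performing the commutativity reshuffle neither introduces nor discards any radical components. Parts~f) and g) will then follow by symmetric arguments---exchanging the roles of $x$ and $y$ in part~f), and using the central idempotent $1_j$ together with the invertibility of $xy$ modulo $f_j^{n_j}$ to handle the morphism $\cdot (x^r y^s)$ in~g)---so no new ideas beyond those used in part~e) should be required.
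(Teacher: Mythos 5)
Your treatment of parts a)--d) and of the $t\geq 2$ bijections in parts e)--g) agrees with the paper's argument: a)--d) are direct consequences of Lemma~\ref{lem:radical}, and for $t\geq 2$ the map $\cdot y^t$ (resp.\ $\cdot x^t$, $\cdot x^{t-1}(x^ry^s)^{a+m+1-i}$) factors through $V^{i-n}$ (resp.\ $V^{i-m}$) by a radical morphism, so that $\rad_V^2(V^{i-tn}, V^x)_0=\Hom_R(V^{i-tn}, V^x)_0$ and the full Hom-bijection of Lemma~\ref{lem:morphism} applies directly.

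The $t=1$ surjectivity argument, however, has a genuine gap. You invoke Lemma~\ref{lem:morphism}~f) to ``write $f=r_2\circ r_1$ with both factors radical'' and with ``$r_1$ either $\cdot x$ or $\cdot y$''. But Lemma~\ref{lem:morphism}~f) only produces a factorization $f=\phi_1\circ(\cdot x)+\phi_2\circ(\cdot y)$ through the almost split sequence ending at $V^{i-n}$, with no control over whether $\phi_1$ or $\phi_2$ are radical. On the other hand, if you instead start from a factorization $f=r_2\circ r_1$ with both factors radical, you cannot arrange $r_1$ to literally be $\cdot x$ or $\cdot y$ --- it only factors through one of them, with a further unknown factor. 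So neither reading gives you a radical $g$ on the nose, and the ``commutativity reshuffle'' for the $\cdot x$-branch is left unsubstantiated. The paper avoids this by a sharper degree observation that your proposal never uses: for $i$ in the stated range one has $a-(i-n)<n$, so every morphism $V^{i-n}\to V^{i'}$ with $i'\leq a$ is multiplication by a pure power of $x$; and in part~g), $a-(i-m)<m$ forces $\Hom_R(V^{i-m},V^{i'})_0=0$ for $i-m<i'\leq a$. With this in hand, a commutative square moves the $\cdot x^k$ factor past any morphism $V^{i'}\to V^{i'',x}$, giving the equality
\[
\rad_V^2(V^{i-n}, V^x)_0 = \rad_V(V^x, V^x)_0\cdot\rad_V(V^{i-n}, V^x)_0,
\]
from which the bijection follows by part~e) of Lemma~\ref{lem:radical} and part~b) of Lemma~\ref{lem:morphism}. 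Without the degree constraint that $\Hom_R(V^{i-n}, V^{[1,a]})_0$ is generated by $x$ alone, the reshuffle does not close up: one would need to justify why morphisms $V^{i-n}\to V^{[1,a]}$ involving $y$ cannot contribute. You should incorporate this degree argument explicitly; as written, the $t=1$ case is not proved.
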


\begin{proof}
a) This follows from part~a) of Lemma~\ref{lem:radical}.

b) This follows from part~b) of Lemma~\ref{lem:radical}.

c) This follows from part~c) of Lemma~\ref{lem:radical}.

d) This follows from part~d) of Lemma~\ref{lem:radical}.

e) We first prove the first bijection. Since we have $a-(i-n)<n$, for any $i-n\leq i'\leq a$, the morphism space $\Hom_R (V^{i-n}, V^{i'})_0$ is spanned by the morphisms given by multiplying non-negative powers of $x$. For any $a+1\leq i''\leq a+n$, we have the commutative diagram
\[
\begin{tikzcd}
V^{i-n} \arrow{r} \arrow{d} & V^{i'} \arrow{d} \\
V^{i-n+i''-i', x} \arrow{r} & V^{i'', x} \mathrlap{\: ,}
\end{tikzcd}
\]
where the horizontal morphisms (respectively, the vertical morphisms) are the corresponding morphisms. This implies that
\[
\Hom_R (V^{i'}, V^{i'', x})_0 \cdot \rad_V (V^{i-n}, V^{i'})_0
\]
is contained in
\[
\rad_V (V^{i-n+i''-i', x}, V^{i'', x})_0 \cdot \Hom_R (V^{i-n}, V^{i-n+i''-i', x})_0 \: .
\]
Therefore, by part~e) of Lemma~\ref{lem:morphism}, we have
\begin{align*}
\rad_V ^2(V^{i-n}, V^x)_0 = & \rad_V (V^{[1, a]}, V^x)_0 \cdot \rad_V (V^{i-n}, V^{[1, a]})_0 \\
 & +\rad_V (V^x, V^x)_0 \cdot \rad_V (V^{i-n}, V^x)_0 \\
= & \rad_V (V^x, V^x)_0 \cdot \rad_V (V^{i-n}, V^x)_0 \: .
\end{align*}
Then the assertion follows from part~e) of Lemma~\ref{lem:radical} and part~b) of Lemma~\ref{lem:morphism}.

We now prove the second bijection. For any morphism $\phi \colon V^{i-tn}\to V^x$, by the universal properties of localizations and quotients, it factors uniquely through the canonical morphism $V^{i-tn} \to V^{i-tn, x}$. This unique morphism factors uniquely through the isomorphism
\[
\cdot y^{t-1} \colon V^{i-tn, x} \xlongrightarrow{_\sim} V^{i-n, x} \: .
\]
Then by the commutative diagram
\[
\begin{tikzcd}
V^{i-tn} \arrow{r}{\cdot y^{t-1}} \arrow{d} & V^{i-n} \arrow{d} \\
V^{i-tn, x} \arrow{r}{\cdot y^{t-1}} & V^{i-n, x}
\end{tikzcd}
\]
with the vertical morphisms the canonical morphisms, we deduce that $\phi$ factors through $V^{i-n}$. By part~e) of Lemma~\ref{lem:radical}, this implies that $\phi$ lies in $\rad_V ^2(V^{i-tn}, V^x)_0$. So we have
\[
\rad_V ^2(V^{i-tn}, V^x)_0 =\Hom_R (V^{i-tn}, V^x)_0 \: .
\]
Then the assertion follows from part~b) of Lemma~\ref{lem:morphism}.

f) Similar to part~e).

g) We first prove the first bijection. Since we have $a-(i-m)<m$, for any $i-m<i'\leq a$, the morphism space $\Hom_R (V^{i-m}, V^{i'})_0$ vanishes. Therefore, by part~e) of Lemma~\ref{lem:morphism}, we have
\begin{align*}
\rad_V ^2(V^{i-m}, V^{f_j})_0 = & \rad_V (V^{i-m}, V^{f_j})_0 \cdot \rad_V (V^{i-m}, V^{i-m})_0 \\
 & +\rad_V (V^{f_j}, V^{f_j})_0 \cdot \rad_V (V^{i-m}, V^{f_j})_0 \: .
\end{align*}
Then the assertion follows from parts~a) and e) of Lemma~\ref{lem:radical} and part~d) of Lemma~\ref{lem:morphism}.

We now prove the second bijection. For any morphism $\phi \colon V^{i-tm}\to V^{f_j}$, by the universal properties of localizations and quotients, it factors uniquely through the canonical morphism $V^{i-tm} \to V^{i-tm, f_j}$. This unique morphism factors uniquely through the isomorphism
\[
\cdot x^{t-1}\colon V^{i-tm, f_j} \xlongrightarrow{_\sim} V^{i-m, f_j} \: .
\]
Then by the commutative diagram
\[
\begin{tikzcd}
V^{i-tm} \arrow{r}{\cdot x^{t-1}} \arrow{d} & V^{i-m} \arrow{d} \\
V^{i-tm, f_j} \arrow{r}{\cdot x^{t-1}} & V^{i-m, f_j}
\end{tikzcd}
\]
with the vertical morphisms the canonical morphisms, we deduce that $\phi$ factors through $V^{i-m}$. By part~e) of Lemma~\ref{lem:radical}, this implies that $\phi$ lies in $\rad_V ^2(V^{i-tm}, V^{f_j})_0$. So we have
\[
\rad_V ^2 (V^{i-tm}, V^{f_j})_0 =\Hom_R (V^{i-tm}, V^{f_j})_0 \: .
\]
Then the assertion follows from part~d) of Lemma~\ref{lem:morphism}.

\end{proof}

\subsection{Proof of Theorem~\ref{thm:non-negative}} \label{ss:proof of non-negative}

By Lemma~\ref{lem:stable endomorphism}, we may and will assume that we have $\End_R (V)_0=\Ga$.

a) Since the category $\ul{\CM \!}_0^\Z R$ is $\Hom$-finite, the algebra $\Ga$ is finite-dimensional. To prove it is Iwanaga--Gorenstein, we first claim that the projective dimension of the simple \linebreak $\Ga \op$-module $S_i$ is less than or equal to $2$ for all $1\leq i\leq a$. In fact, we have the short exact sequence
\[
\begin{tikzcd}[ampersand replacement=\&]
0 \arrow{r} \& R(i)_{\geq 0} \arrow{r}{
\begin{bsmallmatrix}
x \\
y
\end{bsmallmatrix}
} \& R(i+m)_{\geq 0}\oplus R(i+n)_{\geq 0} \arrow{r}{
\begin{bsmallmatrix}
y & -x
\end{bsmallmatrix}
} \& R(i+m+n)_{\geq 0} \arrow{r} \& 0
\end{tikzcd}
\]
in $\CM \!_0^\Z R$. If we apply the functor $\Hom_R (?, V)_0$ to it, we obtain the exact sequence
\[
\begin{tikzcd}[ampersand replacement=\&]
0 \arrow{r} \& \Hom_R(V^{i+m+n}, V)_0 \arrow{r}{
\begin{bsmallmatrix}
y \\
-x
\end{bsmallmatrix}
} \& \Hom_R(V^{i+m}\oplus V^{i+n}, V)_0 \arrow{r}{
\begin{bsmallmatrix}
x & y
\end{bsmallmatrix}
} \& \Hom_R(V^i, V)_0
\end{tikzcd}
\]
of $\Ga \op$-modules. By Lemma~\ref{lem:additive generators}, the $\Ga \op$-module $\Hom_R (V^{i+m+n}, V)_0$ is projective. By part~f) of Lemma~\ref{lem:morphism}, the image of the last morphism is $\rad_V (V^i, V)_0$. So the cokernel of the last morphism is the simple $\Ga \op$-module $S_i$. This implies that its projective dimension is less than or equal to $2$.

We now prove that the projective dimension of any indecomposable injective $\Ga \op$-module is less than or equal to $2$. For any $1\leq i\leq a$, by part~e) of Lemma~\ref{lem:morphism}, we have
\[
\Hom_R(V^x, V^i)_0=\Hom_R(V^y, V^i)_0=\Hom_R(V^{f_j}, V^i)_0=0 \: .
\]
So the indecomposable injective $\Ga \op$-module $I_i$ is an extension of the simple $\Ga \op$-modules $S_{i'}$, $1\leq i'\leq a$. So the projective dimension of $I_i$ is less than or equal to $2$. For any $0\leq i<n$, since we have
\[
\Hom_R(V^y, V^{i, x})_0=\Hom_R(V^{f_j}, V^{i, x})_0=\Hom_R(V^{i, x}, V^{[1, a]})_0=0 \: ,
\]
we have the short exact sequence
\[
\begin{tikzcd}
0 \arrow{r} & M \arrow{r} & I_{(i, x)} \arrow{r} & N \arrow{r} & 0
\end{tikzcd}
\]
of $\Ga \op$-modules, where $M$ (respectively, $N$) is an extension of the simple $\Ga \op$-modules \linebreak $S_{(i', x)}$, $0\leq i'<n$ (respectively, $S_{i'}$, $1\leq i'\leq a$). By the previous argument, the projective dimension of $N$ is less than or equal to $2$. The $\Ga \op$-module $M$ is the image of the indecomposable injective $\End_R(V^x)_0 \op$-module $J_{(i, x)}$ under the fully faithful functor
\[
F\colon \mod(\End_R(V^x)_0 \op) \longrightarrow \mod \Ga \op \: .
\]
By part~b) of Lemma~\ref{lem:morphism}, the algebra $\End_R(V^x)_0$ is self-injective. This implies that the $\End_R(V^x)_0 \op$-module $J_{(i, x)}$ is also projective and hence we have $J_{(i, x)}=Q_{(i-(n_x-1)m, x)}$. Since we have
\[
\Hom_R(V^{i', x}, V^{[1, a]})_0=\Hom_R(V^{i', x}, V^y)_0=\Hom_R(V^{i', x}, V^{f_j})_0=0 \: ,
\]
the $\Ga \op$-module $M$, as the image $\Hom_R(V^{i-(n_x-1)m, x}, V)_0$ of $\Hom_R(V^{i-(n_x-1)m, x}, V^x)_0$ under the functor $F$, is also projective. Since the projective dimensions of the $\Ga \op$-modules $M$ and $N$ are less than or equal to $2$, so is the projective dimension of their extension $I_{(i, x)}$. Similarly, the projective dimensions of the indecomposable injective $\Ga \op$-modules $I_{(i, y)}$ and $I_{(0, j)}$ are less than or equal to $2$. Therefore, the injective dimension of $\Ga$ as a left module over itself is less than or equal to $2$. Similarly, the injective dimension of $\Ga$ as a right module over itself is also less than or equal to $2$. We conclude that the algebra $\Ga$ is Iwanaga--Gorenstein of self-injective dimension less than or equal to $2$. Its global dimension is finite if and only if it is less than or equal to $2$ if and only if the projective dimensions of the simple $\Ga \op$-modules $S_{(i, x)}$, $S_{(i, y)}$, and $S_{(0, j)}$ are less than or equal to $2$. Since the algebras $\End_R(V^x)_0$, $\End_R(V^y)_0$, and $\End_R(V^{f_j})_0$ are isomorphic to self-injective Nakayama algebras, the above equivalent conditions are equivalent to that $n_x$, $n_y$, and $n_j$ are less than or equal to $1$. This means that $R$ is reduced.

b) By part~e) of Lemma~\ref{lem:radical}, the algebra $\Ga$ is basic. By Lemmas~\ref{lem:morphism}, \ref{lem:radical} and \ref{lem:radical square}, we know the structures of $\Ga/\rad\, \Ga$ and $\rad\, \Ga/\rad^2 \Ga$. So we have the surjective morphism $\phi_{r, s}\colon kQ\to \Ga$ of algebras defined in Section~\ref{ss:structure theorem for non-negative a-invariants}. By the descriptions of the images of the arrows under $\phi_{r, s}$, we see that the kernel of $\phi_{r, s}$ coincides with $I_{r, s}$.

\subsection{Proof of Theorem~\ref{thm:negative}} \label{ss:proof of negative}

a) The assertion follows from Proposition~\ref{prop:properness and Gorensteinness}.

b) By part~(b) of Theorem~1.6 of \cite{BuchweitzIyamaYamaura20}, the dg endomorphism algebra of the standard silting object $V$ in $\ul{\CM \!}_0^\Z R$ is quasi-isomorphic to the dg quotient of $\End_{\ct}(\bigoplus_{i=1}^n V^i)$ by $\End_{\ct}(\bigoplus_{i=a+n+1}^n V^i)$. By part~b) of Lemma~\ref{lem:morphism} and Lemma~\ref{lem:stable endomorphism}, the endomorphism algebra $\End_{\ct}(\bigoplus_{i=1}^n V^i)$ is isomorphic to the self-injective Nakayama algebra $N_{n, n_x}$ defined in Proposition~\ref{prop:cofibrant dg replacement of Nakayama algebras}. By Proposition~\ref{prop:cofibrant dg replacement of Nakayama algebras}, it is quasi-isomorphic to the cofibrant dg algebra $(kQ, d)$ defined in Definition~\ref{def:dg path algebras}. So the dg quotient is $(kQ', d)$.

\section{Auslander--Reiten quivers for finite and countable Cohen--Macaulay types} \label{section:Auslander--Reiten quivers}

Buchweitz--Greuel--Schreyer gave a classification of commutative complete local Gorenstein rings of finite (respectively, countable) Cohen--Macaulay representation type: Under mild assumptions, they are precisely the $ADE$ singularities
\[
k\llbracket x, y, z_2, \ldots, z_d\rrbracket /(f(x,y)+z_2^2+\cdots+z_d^2)\: ,
\]
where $f(x,y)$ are the polynomials listed below, \cf Theorem~8.8 of \cite{Yoshino90}, Corollary~10.19 and Theorem 14.16 of \cite{LeuschkeWiegand12}.

In this section, we consider the corresponding $\N$-graded hypersurface singularities \linebreak $R=k[x,y]/(f)$, which are of finite (respectively, countable) Cohen--Macaulay representation type in the sense that there are only finitely (respectively, countably) many indecomposable objects in $\CM\!^{\Z}R$ up to degree shifts.

\[
\adjustbox{max width=\textwidth}{
$\begin{array}{|c||c|c|c|c|}\hline
R&A_{n}\ (\mbox{$n$ is odd})&A_{n}\ (\mbox{$n$ is even})&D_{n}\ (\mbox{$n$ is even})&D_{n}\ (\mbox{$n$ is odd})\\ \hline
f&x^{n+1}-y^2&x^{n+1}-y^2&x^{n-1}-xy^2&x^{n-1}-xy^2\\ \hline
(\deg x,\deg y)&(1,\frac{n+1}{2})&(2,n+1)&(1,\frac{n}{2}-1)&(2,n-2)\\ \hline
\begin{array}{l}\mbox{irreducible}\\ \mbox{factors of $f$}\end{array}&\begin{array}{l}f_1=x^{\frac{n+1}{2}}-y\: ,\\ f_2=x^{\frac{n+1}{2}}+y\end{array}&f&\begin{array}{l}x\: ,\\ f_1=x^{\frac{n}{2}-1}-y\: ,\\ f_2=x^{\frac{n}{2}-1}+y\end{array}&\begin{array}{l}x\: ,\\ f_1=x^{n-2}-y\end{array} \\ \hline
%Q&\begin{array}{cc}D_{\frac{n+3}{2}}&\mbox{$n$ is odd}\\ A_{n}&\mbox{$n$ is even}\end{array}
%&\begin{array}{cc}A_{2n-3}&\mbox{$n$ is odd}\\ D_{n}&\mbox{$n$ is even}\end{array}&E_6&E_7&E_8\\ \hline
\end{array}$}
\]

\vskip-.1em
\[
\adjustbox{max width=\textwidth}{
$\begin{array}{|c||c|c|c||c|c|}\hline
R&E_6&E_7&E_8&A_\infty&D_\infty\\ \hline
f&x^4-y^3&x^3y-y^3&x^5-y^3&y^2&xy^2\\ \hline
(\deg x,\deg y)&(3,4)&(2,3)&(3,5)&(m,l)&(m,l)\\ \hline
\begin{array}{l}\mbox{irreducible}\\ \mbox{factors of $f$}\end{array}&f&\begin{array}{l}y,\\ f_1=x^3-y^2\end{array}&f&y&\begin{array}{l}x,\\ y\end{array}\\ \hline
%Q&\begin{array}{cc}D_{\frac{n+3}{2}}&\mbox{$n$ is odd}\\ A_{n}&\mbox{$n$ is even}\end{array}
%&\begin{array}{cc}A_{2n-3}&\mbox{$n$ is odd}\\ D_{n}&\mbox{$n$ is even}\end{array}&E_6&E_7&E_8\\ \hline
\end{array}$
}
\]

In the rest, we draw the Auslander--Reiten quiver of the category $\CM\!_0^\Z R$ for $R$ in the above list. Our Auslander--Reiten quivers are displayed in a standard way following the theory of translation quivers \cite{Riedtmann80, AssemSimsonSkowronski06}, \cf Section~3 of \cite{Araya99}, see also Section~3 of \cite{DieterichWiedemann86}, Chapter~9 of \cite{Yoshino90}, Section~4.3 of \cite{HijikataNishida97}, and Section~13.2 of \cite{LeuschkeWiegand12} for the non-graded case.

Moreover, we give an explicit description of each indecomposable object $M\in \CM\!_0 ^{\Z} R$ by giving a homogeneous $k$-basis.
%In the pictures below, a symbol $\bigbullet$, $\bigbullet=\bigbullet$ and $\bigbullet=\bigbullet=\bigbullet$  give a basis of each $X_i$.
More explicitly, for $f=\prod_{j=1}^pf_j$, we view $R$ as an $\N$-graded subalgebra of $\prod_{j=1}^p R/(f_j)$.
%We use pictures to present the indecomposable maximal Cohen--Macaulay $\Z$-graded $R$-modules. 
In each picture, a column labelled by $f_{j_i}$ represents a $\Z$-graded $R$-submodule $M^i$ of the graded normalization of $R/(f_{j_i})$ whose homogeneous $k$-basis is given by black circles $\bullet$.
A picture with $r$ columns labelled by $f_{j_1},\ldots,f_{j_r}$ represents a $\Z$-graded $R$-submodule of $\bigoplus_{i=1}^r M^i$ spanned by the black circles $\bullet$, where $\bullet\!\!=\!\!\bullet$ and $\bullet\!\!=\!\!\bullet\!\!=\!\!\bullet$ mean the sum of all $\bullet$.
%spanned by the natural $k$-bases of the components of degrees labelled by black circles.
If we have $p=1$, then we omit the label $f$ for simplicity. 
%A picture represents the $\Z$-graded $R$-submodule of the direct sum of the $\Z$-graded $R$-modules represented by all columns such that the coefficients in different direct summand coincide if there is a equal sign between the corresponding columns. 
%The extra last component in the superscript of the symbol for each $\Z$-graded $R$-module denotes the grading shift. For example, the symbol $R^{i, j}$ (respectively, $X^i$) denotes $R^i(j)$ (respectively, $X(i)$).

\subsection{Finite Cohen--Macaulay representation type}

%We give the Auslander--Reiten quivers of the categories $\CM\!_0 ^{\Z} R$ for $R$ of finite Cohen--Macaulay types. 
In each Auslander--Reiten quiver, we illustrate the standard tilting object $V$ (\cf Section~\ref{ss:structure theorem for non-negative a-invariants}) %in the stable category $\ul{\CM \!}_0^\Z R$
as the direct sum of the framed objects.

\begin{equation*}
\begin{gathered}
\begin{minipage}{0.67\textwidth}
  \centering
\scalebox{0.6}{
\begin{xy} 0;<20pt,0pt>:<0pt,20pt>::
(6,8) *+{\cdots},
(18,8) *+{\iddots},
(30,8) *+{\cdots},
(8,14) *+{R^{l, -4}} ="70",
(10,16) *+{X^{-4}} ="80",
(10,14) *+{Y^{-4}} ="90",
(8,10) *+{R^{l-2, -3}} ="51",
(10,12) *+{R^{l-1, -3}} ="61",
(12,14) *+{R^{l, -3}} ="71",
(14,16) *+{Y^{-3}} ="81",
(14,14) *+{X^{-3}} ="91",
(8,6) *+{R^{3, -3}} ="42",
(12,10) *+{R^{l-2, -2}} ="52",
(14,12) *+{R^{l-1, -2}} ="62",
(16,14) *+{R^{l, -2}} ="72",
(18,16) *+{X^{-2}} ="82",
(18,14) *+{Y^{-2}} ="92",
(8,2) *+{R^{1, -1}} ="23",
(10,4) *+{R^{2, -1}} ="33",
(12,6) *+{R^{3, -1}} ="43",
(16,10) *+{R^{l-2, -1}} ="53",
(18,12) *+{R^{l-1, -1}} ="63",
(20,14) *+{R^{l, -1}} ="73",
(22,16) *+{Y^{-1}} ="83",
(22,14) *+{X^{-1}} ="93",
(10,0) *+{R^0} ="14",
(12,2) *+[F]{R^1} ="24",
(14,4) *+[F]{R^2} ="34",
(16,6) *+[F]{R^3} ="44",
(20,10) *+[F]{R^{l-2}} ="54",
(22,12) *+[F]{R^{l-1}} ="64",
(24,14) *+[F]{R^{l}} ="74",
(26,16) *+[F]{X} ="84",
(26,14) *+[F]{Y} ="94",
(14,0) *+{R^{0, 1}} ="15",
(16,2) *+{R^{1, 1}} ="25",
(18,4) *+{R^{2, 1}} ="35",
(20,6) *+{R^{3, 1}} ="45",
(24,10) *+{R^{l-2, 1}} ="55",
(26,12) *+{R^{l-1, 1}} ="65",
(28,14) *+{R^{l, 1}} ="75",
(18,0) *+{R^{0, 2}} ="16",
(20,2) *+{R^{1, 2}} ="26",
(22,4) *+{R^{2, 2}} ="36",
(24,6) *+{R^{3, 2}} ="46",
(28,10) *+{R^{l-2, 2}} ="56",
(22,0) *+{R^{0, 3}} ="17",
(24,2) *+{R^{1, 3}} ="27",
(26,4) *+{R^{2, 3}} ="37",
(28,6) *+{R^{3, 3}} ="47",
(26,0) *+{R^{0, 4}} ="18",
(28,2) *+{R^{1, 4}} ="28",
"70", {\ar"80"},
"70", {\ar"90"},
"70", {\ar"61"},
"80", {\ar"71"},
"90", {\ar"71"},
"70", {\ar@/^1pc/@{.} "71"},
"80", {\ar@{.} "81"},
"90", {\ar@/_1pc/@{.} "91"},
"51", {\ar"61"},
"61", {\ar"71"},
"71", {\ar"81"},
"71", {\ar"91"},
"61", {\ar"52"},
"71", {\ar"62"},
"81", {\ar"72"},
"91", {\ar"72"},
"51", {\ar@{.} "52"},
"61", {\ar@{.} "62"},
"71", {\ar@/^1pc/@{.} "72"},
"81", {\ar@{.} "82"},
"91", {\ar@/_1pc/@{.} "92"},
"52", {\ar"62"},
"62", {\ar"72"},
"72", {\ar"82"},
"72", {\ar"92"},
"42", {\ar"33"},
"62", {\ar"53"},
"72", {\ar"63"},
"82", {\ar"73"},
"92", {\ar"73"},
"42", {\ar@{.} "43"},
"52", {\ar@{.} "53"},
"62", {\ar@{.} "63"},
"72", {\ar@/^1pc/@{.} "73"},
"82", {\ar@{.} "83"},
"92", {\ar@/_1pc/@{.} "93"},
"23", {\ar"33"},
"33", {\ar"43"},
"53", {\ar"63"},
"63", {\ar"73"},
"73", {\ar"83"},
"73", {\ar"93"},
"23", {\ar"14"},
"33", {\ar"24"},
"43", {\ar"34"},
"63", {\ar"54"},
"73", {\ar"64"},
"83", {\ar"74"},
"93", {\ar"74"},
"23", {\ar@{.} "24"},
"33", {\ar@{.} "34"},
"43", {\ar@{.} "44"},
"53", {\ar@{.} "54"},
"63", {\ar@{.} "64"},
"73", {\ar@/^1pc/@{.} "74"},
"83", {\ar@{.} "84"},
"93", {\ar@/_1pc/@{.} "94"},
"14", {\ar"24"},
"24", {\ar"34"},
"34", {\ar"44"},
"54", {\ar"64"},
"64", {\ar"74"},
"74", {\ar"84"},
"74", {\ar"94"},
"24", {\ar"15"},
"34", {\ar"25"},
"44", {\ar"35"},
"64", {\ar"55"},
"74", {\ar"65"},
"84", {\ar"75"},
"94", {\ar"75"},
"24", {\ar@{.} "25"},
"34", {\ar@{.} "35"},
"44", {\ar@{.} "45"},
"54", {\ar@{.} "55"},
"64", {\ar@{.} "65"},
"74", {\ar@/^1pc/@{.} "75"},
"15", {\ar"25"},
"25", {\ar"35"},
"35", {\ar"45"},
"55", {\ar"65"},
"65", {\ar"75"},
"25", {\ar"16"},
"35", {\ar"26"},
"45", {\ar"36"},
"65", {\ar"56"},
"25", {\ar@{.} "26"},
"35", {\ar@{.} "36"},
"45", {\ar@{.} "46"},
"55", {\ar@{.} "56"},
"16", {\ar"26"},
"26", {\ar"36"},
"36", {\ar"46"},
"26", {\ar"17"},
"36", {\ar"27"},
"46", {\ar"37"},
"26", {\ar@{.} "27"},
"36", {\ar@{.} "37"},
"46", {\ar@{.} "47"},
"17", {\ar"27"},
"27", {\ar"37"},
"37", {\ar"47"},
"27", {\ar"18"},
"37", {\ar"28"},
"27", {\ar@{.} "28"},
"18", {\ar"28"},
\end{xy}
}\\[10pt]
\mbox{\footnotesize type $A_n$ ($n$ is odd), where $l=\frac{n-1}{2}$ and $*^j=*(j)$ for $*=R^i$, $X$, or $Y$}
\end{minipage}
\hfill
\begin{minipage}{0.33\textwidth}
  \centering
$\begin{smallmatrix}
& \mathclap{\small \mbox{$R^i$ ($i\geq 0$)}} & & \quad & \mathclap{\small \mbox{$X$}} & \quad & \mathclap{\small \mbox{$Y$}} & & \\[6pt]
\mbox{\tiny $f_1$} & & \mbox{\tiny $f_2$} & \quad & \mbox{\tiny $f_1$} & \quad & \mbox{\tiny $f_2$} & & \\[2pt]
\bigbullet & \sequal & \bigbullet & \quad & \bigbullet & \quad & \bigbullet & & \makebox[0pt][l]{\tiny $0$} \\[-5pt]
\vdots & & \vdots & \quad & \vdots & \quad & \vdots & & \\
\bigbullet & \sequal & \bigbullet & \quad & \bigbullet & \quad & \bigbullet & & \makebox[0pt][l]{\tiny $l-i$} \\
\bigbullet & & \bigbullet & \quad & \bigbullet & \quad & \bigbullet & & \makebox[0pt][l]{\tiny $l-i+1$} \\[-5pt]
\vdots & & \vdots & \quad & \vdots & \quad & \vdots & &
\end{smallmatrix}$
\\[10pt]
\mbox{\footnotesize $k$-basis of $\Z$-graded modules}
\end{minipage}
\end{gathered}
\end{equation*}

\begin{equation*}
\begin{gathered}
\begin{minipage}{0.67\textwidth}
  \centering
\scalebox{0.6}{
\begin{xy} 0;<20pt,0pt>:<0pt,20pt>::
(6,11) *+{\cdots},
(16,14) *+{\ddots},
(16,6) *+{\iddots},
(24,11) *+{\cdots},
(8,22) *+{R^{0, -1}} ="e0",
%
%(8,18) *+{R^{\frac{n}{2}-1, -7}} ="91",
(8,18) *+{R^{2, -3}} ="c1",
(10,20) *+[F]{R^{1, -1}} ="d1",
(12,22) *+{R^{0, 1}} ="e1",
%
%(4,10) *+{R^{\frac{n}{2}, -8}} ="72",
%(6,12) *+{R^{\frac{n}{2}, -7}} ="82",
%(12,18) *+{R^{\frac{n}{2}-1, -5}} ="92",
%(8,22) *+{R^{4, -5}} ="a2",
(10,16) *+{R^{3, -3}} ="b2",
(12,18) *+[F]{R^{2, -1}} ="c2",
(14,20) *+{R^{1, 1}} ="d2",
(16,22) *+{R^{0, 3}} ="e2",
%
%(8,10) *+{R^{\frac{n}{2}-2, -6}} ="53",
%(6,8) *+{R^{\frac{n}{2}-1, -6}} ="63",
(8,10) *+{R^{l, -6}} ="73",
(10,12) *+{R^{l, -5}} ="83",
%(16,18) *+{R^{\frac{n}{2}-1, -3}} ="93",
%(12,22) *+{R^{4, -3}} ="a3",
(14,16) *+[F]{R^{3, -1}} ="b3",
(16,18) *+{R^{2, 1}} ="c3",
(18,20) *+{R^{1, 3}} ="d3",
(20,22) *+{R^{0, 5}} ="e3",
%
%(8,6) *+{R^{3, -4}} ="44",
%(12,10) *+{R^{\frac{n}{2}-2, -4}} ="54",
(10,8) *+{R^{l-1, -4}} ="64",
(12,10) *+{R^{l, -4}} ="74",
(14,12) *+{R^{l, -3}} ="84",
%(20,18) *+[F]{R^{\frac{n}{2}-1, -1}} ="94",
%(16,22) *+[F]{R^{4, -1}} ="a4",
(18,16) *+{R^{3, 1}} ="b4",
(20,18) *+{R^{2, 3}} ="c4",
(22,20) *+{R^{1, 5}} ="d4",
%(24,22) *+{R^{0, 7}} ="e4",
%
(8,2) *+{R^{1, -2}} ="25",
(10,4) *+{R^{2, -2}} ="35",
%(12,6) *+{R^{3, -2}} ="45",
%(16,10) *+{R^{\frac{n}{2}-2, -2}} ="55",
(14,8) *+{R^{l-1, -2}} ="65",
(16,10) *+{R^{l, -2}} ="75",
(18,12) *+[F]{R^{l, -1}} ="85",
%(24,18) *+{R^{\frac{n}{2}-1, 1}} ="95",
%(20,22) *+{R^{4, 1}} ="a5",
(22,16) *+{R^{3, 3}} ="b5",
%(24,18) *+{R^{2, 5}} ="c5",
%(26,20) *+{R^{1, 7}} ="d5",
%
(10,0) *+{R^0} ="16",
(12,2) *+[F]{R^1} ="26",
(14,4) *+[F]{R^2} ="36",
%(16,6) *+[F]{R^{3, 0}} ="46",
%(20,10) *+[F]{R^{\frac{n}{2}-2, 0}} ="56",
(18,8) *+[F]{R^{l-1}} ="66",
(20,10) *+[F]{R^{l}} ="76",
(22,12) *+{R^{l, 1}} ="86",
%(24,22) *+{R^{4, 3}} ="a6",
%(26,16) *+{R^{3, 5}} ="b6",
%
(14,0) *+{R^{0, 2}} ="17",
(16,2) *+{R^{1, 2}} ="27",
(18,4) *+{R^{2, 2}} ="37",
%(20,6) *+{R^{3, 2}} ="47",
%(24,10) *+{R^{\frac{n}{2}-2, 2}} ="57",
(22,8) *+{R^{l-1, 2}} ="67",
(18,0) *+{R^{0, 4}} ="18",
(20,2) *+{R^{1, 4}} ="28",
(22,4) *+{R^{2, 4}} ="38",
%(24,6) *+{R^{3, 4}} ="48",
%
(22,0) *+{R^{0, 6}} ="19",
%(24,2) *+{R^{1, 6}} ="29",
%(26,4) *+{R^{2, 6}} ="39",
%
%(26,0) *+{R^{0, 8}} ="110",
%
"e0", {\ar"d1"},
"c1", {\ar"d1"},
"d1", {\ar"e1"},
%
%"91", {\ar"82"},
"c1", {\ar"b2"},
"d1", {\ar"c2"},
"e1", {\ar"d2"},
%"91", {\ar@{.} "92"},
"c1", {\ar@{.} "c2"},
"d1", {\ar@{.} "d2"},
%
%"72", {\ar"82"},
%"82", {\ar"92"},
%"a2", {\ar"b2"},
"b2", {\ar"c2"},
"c2", {\ar"d2"},
"d2", {\ar"e2"},
%
%"72", {\ar"63"},
%"82", {\ar"73"},
%"92", {\ar"83"},
%"b2", {\ar"a3"},
"c2", {\ar"b3"},
"d2", {\ar"c3"},
"e2", {\ar"d3"},
%"72", {\ar@{.} "73"},
%"82", {\ar@{.} "83"},
%"92", {\ar@{.} "93"},
%"a2", {\ar@{.} "a3"},
"b2", {\ar@{.} "b3"},
"c2", {\ar@{.} "c3"},
"d2", {\ar@{.} "d3"},
%
%"53", {\ar"63"},
%"63", {\ar"73"},
"73", {\ar"83"},
%"83", {\ar"93"},
%"a3", {\ar"b3"},
"b3", {\ar"c3"},
"c3", {\ar"d3"},
"d3", {\ar"e3"},
%
%"63", {\ar"54"},
"73", {\ar"64"},
"83", {\ar"74"},
%"93", {\ar"84"},
%"b3", {\ar"a4"},
"c3", {\ar"b4"},
"d3", {\ar"c4"},
"e3", {\ar"d4"},
%"53", {\ar@{.} "54"},
%"63", {\ar@{.} "64"},
"73", {\ar@{.} "74"},
"83", {\ar@{.} "84"},
%"93", {\ar@{.} "94"},
%"a3", {\ar@{.} "a4"},
"b3", {\ar@{.} "b4"},
"c3", {\ar@{.} "c4"},
"d3", {\ar@{.} "d4"},
%
%"54", {\ar"64"},
"64", {\ar"74"},
"74", {\ar"84"},
%"84", {\ar"94"},
%"a4", {\ar"b4"},
"b4", {\ar"c4"},
"c4", {\ar"d4"},
%"d4", {\ar"e4"},
%
%"44", {\ar"35"},
%"64", {\ar"55"},
"74", {\ar"65"},
"84", {\ar"75"},
%"94", {\ar"85"},
%"b4", {\ar"a5"},
"c4", {\ar"b5"},
%"d4", {\ar"c5"},
%"e4", {\ar"d5"},
%"44", {\ar@{.} "45"},
%"54", {\ar@{.} "55"},
"64", {\ar@{.} "65"},
"74", {\ar@{.} "75"},
"84", {\ar@{.} "85"},
%"94", {\ar@{.} "95"},
%"a4", {\ar@{.} "a5"},
"b4", {\ar@{.} "b5"},
%"c4", {\ar@{.} "c5"},
%"d4", {\ar@{.} "d5"},
%
"25", {\ar"35"},
%"35", {\ar"45"},
%"55", {\ar"65"},
"65", {\ar"75"},
"75", {\ar"85"},
%"85", {\ar"95"},
%"a5", {\ar"b5"},
%"b5", {\ar"c5"},
%"c5", {\ar"d5"},
%
"25", {\ar"16"},
"35", {\ar"26"},
%"45", {\ar"36"},
%"65", {\ar"56"},
"75", {\ar"66"},
"85", {\ar"76"},
%"95", {\ar"86"},
%"b5", {\ar"a6"},
%"c5", {\ar"b6"},
"25", {\ar@{.} "26"},
"35", {\ar@{.} "36"},
%"45", {\ar@{.} "46"},
%"55", {\ar@{.} "56"},
"65", {\ar@{.} "66"},
"75", {\ar@{.} "76"},
"85", {\ar@{.} "86"},
%"a5", {\ar@{.} "a6"},
%"b5", {\ar@{.} "b6"},
%
"16", {\ar"26"},
"26", {\ar"36"},
%"36", {\ar"46"},
%"56", {\ar"66"},
"66", {\ar"76"},
"76", {\ar"86"},
%"a6", {\ar"b6"},
%
"26", {\ar"17"},
"36", {\ar"27"},
%"46", {\ar"37"},
%"66", {\ar"57"},
"76", {\ar"67"},
"26", {\ar@{.} "27"},
"36", {\ar@{.} "37"},
%"46", {\ar@{.} "47"},
%"56", {\ar@{.} "57"},
"66", {\ar@{.} "67"},
"17", {\ar"27"},
"27", {\ar"37"},
%"37", {\ar"47"},
%"57", {\ar"67"},
%
"27", {\ar"18"},
"37", {\ar"28"},
%"47", {\ar"38"},
"27", {\ar@{.} "28"},
"37", {\ar@{.} "38"},
%"47", {\ar@{.} "48"},
%
"18", {\ar"28"},
"28", {\ar"38"},
%"38", {\ar"48"},
%
"28", {\ar"19"},
%"38", {\ar"29"},
%"48", {\ar"39"},
%"28", {\ar@{.} "29"},
%"38", {\ar@{.} "39"},
%
%"19", {\ar"29"},
%"29", {\ar"39"},
%
%"29", {\ar"110"},
\end{xy}
}\\[10pt]
\mbox{\footnotesize type $A_n$ ($n$ is even), where $l=\frac{n}{2}$ and $*^j=*(j)$ for $*=R^i$}
\end{minipage}
\hfill
\begin{minipage}{0.33\textwidth}
  \centering
$\begin{smallmatrix}
\mathclap{\mbox{\small $R^i$ ($i\geq 0$)}} & & \\[6pt]
\bigbullet & & \makebox[0pt][l]{\tiny $0$} \\
\times & & \makebox[0pt][l]{\tiny $1$} \\
\bigbullet & & \makebox[0pt][l]{\tiny $2$} \\
\times & & \makebox[0pt][l]{\tiny $3$} \\[-5pt]
\vdots & \\
\bigbullet & & \makebox[0pt][l]{\tiny $n-2i-2$} \\
\times & & \makebox[0pt][l]{\tiny $n-2i-1$} \\
\bigbullet & & \makebox[0pt][l]{\tiny $n-2i$} \\
\bigbullet & & \makebox[0pt][l]{\tiny $n-2i+1$} \\[-5pt]
\vdots & &
\end{smallmatrix}$
\\[10pt]
\mbox{\footnotesize $k$-basis of $\Z$-graded modules}
\end{minipage}
\end{gathered}
\end{equation*}

\begin{equation*}
\begin{gathered}
\scalebox{0.6}{
\begin{xy} 0;<20pt,0pt>:<0pt,20pt>::
(6,8) *+{\cdots},
(18,8) *+{\iddots},
(34,8) *+{\cdots},
(8,14) *+{R^{l, -2}} ="71",
(10,16) *+{\tau X^{-2}} ="81",
(10,14) *+{\tau Y^{-2}} ="a1",
(8,10) *+{\tau R^{l-1, -1}} ="52",
(10,12) *+{Z^{l, -2}} ="62",
(12,14) *+{\tau R^{l, -1}} ="72",
(14,16) *+{X^{-2}} ="82",
(14,14) *+{Y^{-2}} ="a2",
(8,6) *+{R^{2, -1}} ="43",
(12,10) *+{R^{l-1, -1}} ="53",
(14,12) *+{\tau Z^{l, -1}} ="63",
(16,14) *+{R^{l, -1}} ="73",
(18,16) *+{\tau X^{-1}} ="83",
(18,14) *+{\tau Y^{-1}} ="a3",
(8,2) *+{\tau R^1} ="24",
(10,4) *+{Z^{2, -1}} ="34",
(12,6) *+{\tau R^2} ="44",
(16,10) *+{\tau R^{l-1}} ="54",
(18,12) *+{Z^{l, -1}} ="64",
(20,14) *+{\tau R^{l}} ="74",
(22,16) *+{X^{-1}} ="84",
(10,2) *+{R^0} ="94",
(22,14) *+{Y^{-1}} ="a4",
(10,0) *+{\tau Z^1} ="15",
(12,2) *+{R^1} ="25",
(14,4) *+{\tau Z^2} ="35",
(16,6) *+{R^2} ="45",
(20,10) *+{R^{l-1}} ="55",
(22,12) *+{\tau Z^{l}} ="65",
(24,14) *+{R^{l}} ="75",
(26,16) *+{\tau X} ="85",
(26,14) *+{\tau Y} ="a5",
(14,0) *+{Z^1} ="16",
(16,2) *+[F]{\tau R^{1, 1}} ="26",
(18,4) *+{Z^2} ="36",
(20,6) *+[F]{\tau R^{2, 1}} ="46",
(24,10) *+[F]{\tau R^{l-1, 1}} ="56",
(26,12) *+{Z^{l}} ="66",
(28,14) *+[F]{\tau R^{l, 1}} ="76",
(30,16) *+[F]{X} ="86",
(18,2) *+{R^{0, 1}} ="96",
(30,14) *+[F]{Y} ="a6",
(18,0) *+[F]{\tau Z^{1, 1}} ="17",
(20,2) *+{R^{1, 1}} ="27",
(22,4) *+{\tau Z^{2, 1}} ="37",
(24,6) *+{R^{2, 1}} ="47",
(28,10) *+{R^{l-1, 1}} ="57",
(30,12) *+{\tau Z^{l, 1}} ="67",
(32,14) *+{R^{l, 1}} ="77",
(22,0) *+{Z^{1, 1}} ="18",
(24,2) *+{\tau R^{1, 2}} ="28",
(26,4) *+{Z^{2, 1}} ="38",
(28,6) *+{\tau R^{2, 2}} ="48",
(32,10) *+{\tau R^{l-1, 2}} ="58",
(26,2) *+{R^{0, 2}} ="98",
(26,0) *+[F]{\tau Z^{1, 2}} ="19",
(28,2) *+{R^{1, 2}} ="29",
(30,4) *+{\tau Z^{2, 2}} ="39",
(32,6) *+{R^{2, 2}} ="49",
(30,0) *+{Z^{1, 2}} ="110",
(32,2) *+{\tau R^{1, 3}} ="210",
"71", {\ar"81"},
"71", {\ar"a1"},
"71", {\ar"62"},
"81", {\ar"72"},
"a1", {\ar"72"},
"71", {\ar@/^1pc/@{.} "72"},
"81", {\ar@{.} "82"},
"a1", {\ar@/_1pc/@{.} "a2"},
"52", {\ar"62"},
"62", {\ar"72"},
"72", {\ar"82"},
"72", {\ar"a2"},
"62", {\ar"53"},
"72", {\ar"63"},
"82", {\ar"73"},
"a2", {\ar"73"},
"52", {\ar@{.} "53"},
"62", {\ar@{.} "63"},
"72", {\ar@/^1pc/@{.} "73"},
"82", {\ar@{.} "83"},
"a2", {\ar@/_1pc/@{.} "a3"},
"53", {\ar"63"},
"63", {\ar"73"},
"73", {\ar"83"},
"73", {\ar"a3"},
"43", {\ar"34"},
"43", {\ar@{.} "44"},
"63", {\ar"54"},
"73", {\ar"64"},
"83", {\ar"74"},
"a3", {\ar"74"},
"53", {\ar@{.} "54"},
"63", {\ar@{.} "64"},
"73", {\ar@/^1pc/@{.} "74"},
"83", {\ar@{.} "84"},
"a3", {\ar@/_1pc/@{.} "a4"},
"24", {\ar"34"},
"34", {\ar"44"},
"54", {\ar"64"},
"64", {\ar"74"},
"74", {\ar"84"},
"24", {\ar"94"},
"74", {\ar"a4"},
"24", {\ar"15"},
"34", {\ar"25"},
"44", {\ar"35"},
"64", {\ar"55"},
"74", {\ar"65"},
"84", {\ar"75"},
"94", {\ar"25"},
"a4", {\ar"75"},
"24", {\ar@/^1pc/@{.} "25"},
"34", {\ar@{.} "35"},
"44", {\ar@{.} "45"},
"54", {\ar@{.} "55"},
"64", {\ar@{.} "65"},
"74", {\ar@/^1pc/@{.} "75"},
"84", {\ar@{.} "85"},
"a4", {\ar@/_1pc/@{.} "a5"},
"15", {\ar"25"},
"25", {\ar"35"},
"35", {\ar"45"},
"55", {\ar"65"},
"65", {\ar"75"},
"75", {\ar"85"},
"75", {\ar"a5"},
"25", {\ar"16"},
"35", {\ar"26"},
"45", {\ar"36"},
"65", {\ar"56"},
"75", {\ar"66"},
"85", {\ar"76"},
"a5", {\ar"76"},
"15", {\ar@{.} "16"},
"25", {\ar@{.} "26"},
"35", {\ar@{.} "36"},
"45", {\ar@{.} "46"},
"55", {\ar@{.} "56"},
"65", {\ar@{.} "66"},
"35", {\ar@{.} "36"},
"45", {\ar@{.} "46"},
"75", {\ar@/^1pc/@{.} "76"},
"85", {\ar@{.} "86"},
"a5", {\ar@/_1pc/@{.} "a6"},
"16", {\ar"26"},
"26", {\ar"36"},
"36", {\ar"46"},
"56", {\ar"66"},
"66", {\ar"76"},
"76", {\ar"86"},
"76", {\ar"a6"},
"26", {\ar"96"},
"26", {\ar"17"},
"36", {\ar"27"},
"46", {\ar"37"},
"66", {\ar"57"},
"76", {\ar"67"},
"86", {\ar"77"},
"96", {\ar"27"},
"a6", {\ar"77"},
"16", {\ar@{.} "17"},
"26", {\ar@/^1pc/@{.} "27"},
"36", {\ar@{.} "37"},
"46", {\ar@{.} "47"},
"56", {\ar@{.} "57"},
"66", {\ar@{.} "67"},
"76", {\ar@/^1pc/@{.} "77"},
"17", {\ar"27"},
"27", {\ar"37"},
"37", {\ar"47"},
"57", {\ar"67"},
"67", {\ar"77"},
"27", {\ar"18"},
"37", {\ar"28"},
"47", {\ar"38"},
"67", {\ar"58"},
"17", {\ar@{.} "18"},
"27", {\ar@{.} "28"},
"37", {\ar@{.} "38"},
"47", {\ar@{.} "48"},
"57", {\ar@{.} "58"},
"18", {\ar"28"},
"28", {\ar"38"},
"38", {\ar"48"},
"28", {\ar"98"},
"28", {\ar"19"},
"38", {\ar"29"},
"48", {\ar"39"},
"98", {\ar"29"},
"18", {\ar@{.} "19"},
"28", {\ar@/^1pc/@{.} "29"},
"38", {\ar@{.} "39"},
"48", {\ar@{.} "49"},
"19", {\ar"29"},
"29", {\ar"39"},
"39", {\ar"49"},
"29", {\ar"110"},
"39", {\ar"210"},
"19", {\ar@{.} "110"},
"29", {\ar@{.} "210"},
"110", {\ar"210"},
\end{xy}
}\\[10pt]
\mbox{\footnotesize type $D_n$ ($n$ is even), where $l=\frac{n}{2}-1$ and $*^j=*(j)$ for $*=R^i$, $X$, $Y$, $Z^i$, $\tau R^i$, $\tau X$, $\tau Y$, or $\tau Z^i$}
\end{gathered}
\end{equation*}

In the above Auslander--Reiten quiver, the last framed object in the bottom row is $\tau Z^{1, l}$.

\[
\begin{gathered}
\strut\kern-3em
\begin{smallmatrix}
 & & \mathclap{\mbox{\small $R^{0}$}} & & & \quad & & & \mathclap{\mbox{\small $R^{i}$ ($i\geq 1$)}} & & & \quad & \mathclap{\mbox{\small $X$}} & \quad & \mathclap{\mbox{\small $Y$}} & \quad & & \mathclap{\mbox{\small $Z^i$ ($i\geq 1$)}} & & \quad \; \; & & & \mathclap{\mbox{\small $\tau R^{i}$ ($i\geq 1$)}} & & & \quad & & \mathclap{\mbox{\small $\tau X$}} & & \quad & & \mathclap{\mbox{\small $\tau Y$}} & & \quad & \mathclap{\mbox{\small $\tau Z^1$}} & \quad & & & & \mathclap{\mbox{\small $\tau Z^i$ ($i\geq 2$)}} & & & & & \\[6pt]
\mbox{\tiny $x$} & & \mbox{\tiny $f_1$} & & \mbox{\tiny $f_2$} & \quad & \mbox{\tiny $x$} & & \mbox{\tiny $f_1$} & & \mbox{\tiny $f_2$} & \quad & \mbox{\tiny $f_1$} & \quad & \mbox{\tiny $f_2$} & \quad & \mbox{\tiny $f_1$} & & \mbox{\tiny $f_2$} & \quad \; \; & \mbox{\tiny $x$} & & \mbox{\tiny $f_1$} & & \mbox{\tiny $f_2$} & \quad & \mbox{\tiny $x$} & & \mbox{\tiny $f_2$} & \quad & \mbox{\tiny $x$} & & \mbox{\tiny $f_1$} & \quad & \mbox{\tiny $x$} & \quad & \mbox{\tiny $x$} & & \mbox{\tiny $f_1$} & & \mbox{\tiny $f_2$} & & \mbox{\tiny $x$} & & \\[2pt]
\bigbullet & \sequal & \bigbullet & \sequal & \bigbullet & \quad & \bigbullet & \sequal & \bigbullet & \sequal & \bigbullet & \quad & \bigbullet & \quad & \bigbullet & \quad & \bigbullet & \sequal & \bigbullet & \quad \; \; & & & \times & & \times & \quad & \bigbullet & \sequal & \bigbullet & \quad & \bigbullet & \sequal & \bigbullet & \quad & \times & \quad & \bigbullet & \sequal & \bigbullet & \sequal & \bigbullet & & & & \makebox[0pt][l]{\tiny $0$} \\
 & & \bigbullet & \sequal & \bigbullet & \quad & & & \bigbullet & \sequal & \bigbullet & \quad & \bigbullet & \quad & \bigbullet & \quad & \bigbullet & \sequal & \bigbullet & \quad \; \; & & & \bigbullet & \sequal & \bigbullet & \quad & & & \bigbullet & \quad & & & \bigbullet & \quad & & \quad & & & \bigbullet & \sequal & \bigbullet & & & & \makebox[0pt][l]{\tiny $1$} \\[-5pt]
 & & \vdots & & \vdots & \quad & & & \vdots & & \vdots & \quad & \vdots & \quad & \vdots & \quad & \vdots & & \vdots & \quad \; \; & & & \vdots & & \vdots & \quad & & & \vdots & \quad & & & \vdots & \quad & & \quad & & & \vdots & & \vdots & & & & \\
 & & \bigbullet & \sequal & \bigbullet & \quad & & & \bigbullet & \sequal & \bigbullet & \quad & \bigbullet & \quad & \bigbullet & \quad & \bigbullet & \sequal & \bigbullet & \quad \; \; & & & \bigbullet & \sequal & \bigbullet & \quad & & & \bigbullet & \quad & & & \bigbullet & \quad & & \quad & & & \bigbullet & \sequal & \bigbullet & & & & \makebox[0pt][l]{\tiny $l-i$} \\
 & & \bigbullet & \sequal & \bigbullet & \quad & & & \bigbullet & & \bigbullet & \quad & \bigbullet & \quad & \bigbullet & \quad & \bigbullet & & \bigbullet & \quad \; \; & \bigbullet & \sminus & \bigbullet & \sminus & \bigbullet & \quad & & & \bigbullet & \quad & & & \bigbullet & \quad & & \quad & & & \bigbullet & \sminus & \bigbullet & \sminus & \bigbullet & & \makebox[0pt][l]{\tiny $l-i+1$} \\
 & & \bigbullet & \sequal & \bigbullet & \quad & & & \bigbullet & & \bigbullet & \quad & \bigbullet & \quad & \bigbullet & \quad & \bigbullet & & \bigbullet & \quad \; \; & & & \bigbullet & & \bigbullet & \quad & & & \bigbullet & \quad & & & \bigbullet & \quad & & \quad & & & \bigbullet & & \bigbullet & & & & \makebox[0pt][l]{\tiny $l-i+2$} \\[-5pt]
 & & \vdots & & \vdots & \quad & & & \vdots & & \vdots & \quad & \vdots & \quad & \vdots & \quad & \vdots & & \vdots & \quad \; \; & & & \vdots & & \vdots & \quad & & & \vdots & \quad & & & \vdots & \quad & & \quad & & & \vdots & & \vdots & & & & \\
 & & \bigbullet & \sequal & \bigbullet & \quad & & & \bigbullet & & \bigbullet & \quad & \bigbullet & \quad & \bigbullet & \quad & \bigbullet & & \bigbullet & \quad \; \; & & & \bigbullet & & \bigbullet & \quad & & & \bigbullet & \quad & & & \bigbullet & \quad & & \quad & & & \bigbullet & & \bigbullet & & & & \makebox[0pt][l]{\tiny $l-1$} \\
\bigbullet & \sminus & \bigbullet & \sminus & \bigbullet & \quad & \bigbullet & & \bigbullet & & \bigbullet & \quad & \bigbullet & \quad & \bigbullet & \quad & \bigbullet & & \bigbullet & \quad \; \; & & & \bigbullet & & \bigbullet & \quad & \bigbullet & & \bigbullet & \quad & \bigbullet & & \bigbullet & \quad & \bigbullet & \quad & \bigbullet & & \bigbullet & & \bigbullet & & & & \makebox[0pt][l]{\tiny $l$} \\
 & & \bigbullet & & \bigbullet & \quad & & & \bigbullet & & \bigbullet & \quad & \bigbullet & \quad & \bigbullet & \quad & \bigbullet & & \bigbullet & \quad \; \; & & & \bigbullet & & \bigbullet & \quad & & & \bigbullet & \quad & & & \bigbullet & \quad & & \quad & & & \bigbullet & & \bigbullet & & & & \makebox[0pt][l]{\tiny $l+1$} \\[-5pt]
 & & \vdots & & \vdots & \quad & & & \vdots & & \vdots & \quad & \vdots & \quad & \vdots & \quad & \vdots & & \vdots & \quad \; \; & & & \vdots & & \vdots & \quad & & & \vdots & \quad & & & \vdots & \quad & & \quad & & & \vdots & & \vdots & & & & \\
 & & \bigbullet & & \bigbullet & \quad & & & \bigbullet & & \bigbullet & \quad & \bigbullet & \quad & \bigbullet & \quad & \bigbullet & & \bigbullet & \quad \; \; & & & \bigbullet & & \bigbullet & \quad & & & \bigbullet & \quad & & & \bigbullet & \quad & & \quad & & & \bigbullet & & \bigbullet & & & & \makebox[0pt][l]{\tiny $n-i-2$} \\
 & & \bigbullet & & \bigbullet & \quad & & & \bigbullet & & \bigbullet & \quad & \bigbullet & \quad & \bigbullet & \quad & \bigbullet & & \bigbullet & \quad \; \; & \bigbullet & & \bigbullet & & \bigbullet & \quad & & & \bigbullet & \quad & & & \bigbullet & \quad & & \quad & & & \bigbullet & & \bigbullet & & \bigbullet & & \makebox[0pt][l]{\tiny $n-i-1$} \\
 & & \bigbullet & & \bigbullet & \quad & & & \bigbullet & & \bigbullet & \quad & \bigbullet & \quad & \bigbullet & \quad & \bigbullet & & \bigbullet & \quad \; \; & & & \bigbullet & & \bigbullet & \quad & & & \bigbullet & \quad & & & \bigbullet & \quad & & \quad & & & \bigbullet & & \bigbullet & & & & \makebox[0pt][l]{\tiny $n-i$} \\[-5pt]
 & & \vdots & & \vdots & \quad & & & \vdots & & \vdots & \quad & \vdots & \quad & \vdots & \quad & \vdots & & \vdots & \quad \; \; & & & \vdots & & \vdots & \quad & & & \vdots & \quad & & & \vdots & \quad & & \quad & & & \vdots & & \vdots & & & & \\
 & & \bigbullet & & \bigbullet & \quad & & & \bigbullet & & \bigbullet & \quad & \bigbullet & \quad & \bigbullet & \quad & \bigbullet & & \bigbullet & \quad \; \; & & & \bigbullet & & \bigbullet & \quad & & & \bigbullet & \quad & & & \bigbullet & \quad & & \quad & & & \bigbullet & & \bigbullet & & & & \makebox[0pt][l]{\tiny $n-3$} \\
\bigbullet & & \bigbullet & & \bigbullet & \quad & \bigbullet & & \bigbullet & & \bigbullet & \quad & \bigbullet & \quad & \bigbullet & \quad & \bigbullet & & \bigbullet & \quad \; \; & & & \bigbullet & & \bigbullet & \quad & \bigbullet & & \bigbullet & \quad & \bigbullet & & \bigbullet & \quad & \bigbullet & \quad & \bigbullet & & \bigbullet & & \bigbullet & & & & \makebox[0pt][l]{\tiny $n-2$} \\
 & & \bigbullet & & \bigbullet & \quad & & & \bigbullet & & \bigbullet & \quad & \bigbullet & \quad & \bigbullet & \quad & \bigbullet & & \bigbullet & \quad \; \; & & & \bigbullet & & \bigbullet & \quad & & & \bigbullet & \quad & & & \bigbullet & \quad & & \quad & & & \bigbullet & & \bigbullet & & & & \makebox[0pt][l]{\tiny $n-1$} \\[-5pt]
\vdots & & \vdots & & \vdots & \quad & \vdots & & \vdots & & \vdots & \quad & \vdots & \quad & \vdots & \quad & \vdots & & \vdots & \quad \; \; & \vdots & & \vdots & & \vdots & \quad & \vdots & & \vdots & \quad & \vdots & & \vdots & \quad & \vdots & \quad & \vdots & & \vdots & & \vdots & & \vdots & &
\end{smallmatrix} \\[10pt]
\mbox{\footnotesize $k$-basis of $\Z$-graded modules for type $D_n$ ($n$ is even)}
\end{gathered}
\]

The symbol $\bullet\!\!-\!\!\bullet\!\!-\!\!\bullet$ means the subspace $\{(a,b,c)\in k^3\mid 2a-b+c=0\}$, where the natural basis of $k^3$ is given by the black circles in the columns labelled by $x$, $f_1$, $f_2$, respectively.

\begin{equation*}
\begin{gathered}
\scalebox{0.6}{
\begin{xy} 0;<20pt,0pt>:<0pt,20pt>::
(6,14) *+{\cdots},
(18,20) *+{\ddots},
(18,8) *+{\iddots},
(30,14) *+{\cdots},
(8,26) *+{R^{1, -1}} ="c0",
(10,28) *+{X^{1, -1}} ="d0",
(8,18) *+{R^{l-1, -3}} ="91",
(8,22) *+{\tau R^{2, -1}} ="a1",
(10,24) *+{\tau X^{2, -1}} ="b1",
(12,26) *+[F]{\tau R^{1, 1}} ="c1",
(14,28) *+[F]{\tau X^{1, 1}} ="d1",
(14,26) *+{R^{0, 1}} ="f1",
(8,14) *+{R^{l, -4}} ="72",
(10,16) *+{\tau X^{l, -3}} ="82",
(12,18) *+{\tau R^{l-1, -1}} ="92",
(12,22) *+{R^{2, -1}} ="a2",
(14,24) *+{X^{2, -1}} ="b2",
(16,26) *+{R^{1, 1}} ="c2",
(18,28) *+{X^{1, 1}} ="d2",
(8,10) *+{\tau R^{l-1, -2}} ="53",
(10,12) *+{X^{l, -4}} ="63",
(12,14) *+{R^{l, -3}} ="73",
(14,16) *+{X^{l, -3}} ="83",
(16,18) *+{R^{l-1, -1}} ="93",
(16,22) *+[F]{\tau R^{2, 1}} ="a3",
(18,24) *+{\tau X^{2, 1}} ="b3",
(20,26) *+{\tau R^{1, 3}} ="c3",
(22,28) *+[F]{\tau X^{1, 3}} ="d3",
(22,26) *+{R^{0, 3}} ="f3",
(8,6) *+{R^{2, -2}} ="44",
(12,10) *+{R^{l-1, -2}} ="54",
(14,12) *+{\tau X^{l, -2}} ="64",
(16,14) *+{R^{l, -2}} ="74",
(18,16) *+{\tau X^{l, -1}} ="84",
(20,18) *+[F]{\tau R^{l-1, 1}} ="94",
(20,22) *+{R^{2, 1}} ="a4",
(22,24) *+{X^{2, 1}} ="b4",
(24,26) *+{R^{1, 3}} ="c4",
(26,28) *+{X^{1, 3}} ="d4",
(8,2) *+{\tau R^1} ="25",
(10,4) *+{X^{2, -2}} ="35",
(12,6) *+{\tau R^2} ="45",
(16,10) *+{\tau R^{l-1}} ="55",
(18,12) *+{X^{l, -2}} ="65",
(20,14) *+{R^{l, -1}} ="75",
(22,16) *+{X^{l, -1}} ="85",
(24,18) *+{R^{l-1, 1}} ="95",
(24,22) *+{\tau R^{2, 3}} ="a5",
(26,24) *+{\tau X^{2, 3}} ="b5",
(28,26) *+{\tau R^{1, 5}} ="c5",
(10,2) *+{R^0} ="e5",
(10,0) *+{\tau X^1} ="16",
(12,2) *+{R^1} ="26",
(14,4) *+{\tau X^2} ="36",
(16,6) *+{R^2} ="46",
(20,10) *+{R^{l-1}} ="56",
(22,12) *+{\tau X^{l}} ="66",
(24,14) *+[F]{R^{l}} ="76",
(26,16) *+{\tau X^{l, 1}} ="86",
(28,18) *+{\tau R^{l-1, 3}} ="96",
(28,22) *+{R^{2, 3}} ="a6",
(14,0) *+{X^1} ="17",
(16,2) *+[F]{\tau R^{1, 2}} ="27",
(18,4) *+{X^2} ="37",
(20,6) *+[F]{\tau R^{2, 2}} ="47",
(24,10) *+[F]{\tau R^{l-1, 2}} ="57",
(26,12) *+[F]{X^{l}} ="67",
(28,14) *+{R^{l, 1}} ="77",
(18,2) *+{R^{0, 2}} ="e7",
(18,0) *+[F]{\tau X^{1, 2}} ="18",
(20,2) *+{R^{1, 2}} ="28",
(22,4) *+{\tau X^{2, 2}} ="38",
(24,6) *+{R^{2, 2}} ="48",
(28,10) *+{R^{l-1, 2}} ="58",
(22,0) *+{X^{1, 2}} ="19",
(24,2) *+{\tau R^{1, 4}} ="29",
(26,4) *+{X^{2, 2}} ="39",
(28,6) *+{\tau R^{2, 4}} ="49",
(26,2) *+{R^{0, 4}} ="e9",
(26,0) *+[F]{\tau X^{1, 4}} ="110",
(28,2) *+{R^{1, 4}} ="210",
"c0", {\ar"d0"},
"c0", {\ar"b1"},
"d0", {\ar"c1"},
"c0", {\ar@{.} "c1"},
"d0", {\ar@{.} "d1"},
"a1", {\ar"b1"},
"b1", {\ar"c1"},
"c1", {\ar"d1"},
"c1", {\ar"f1"},
"91", {\ar"82"},
"b1", {\ar"a2"},
"c1", {\ar"b2"},
"d1", {\ar"c2"},
"f1", {\ar"c2"},
"91", {\ar@{.} "92"},
"a1", {\ar@{.} "a2"},
"b1", {\ar@{.} "b2"},
"c1", {\ar@/^1pc/@{.} "c2"},
"d1", {\ar@{.} "d2"},
"72", {\ar"82"},
"82", {\ar"92"},
"a2", {\ar"b2"},
"b2", {\ar"c2"},
"c2", {\ar"d2"},
"72", {\ar"63"},
"82", {\ar"73"},
"92", {\ar"83"},
"b2", {\ar"a3"},
"c2", {\ar"b3"},
"d2", {\ar"c3"},
"72", {\ar@{.} "73"},
"82", {\ar@{.} "83"},
"92", {\ar@{.} "93"},
"a2", {\ar@{.} "a3"},
"b2", {\ar@{.} "b3"},
"c2", {\ar@{.} "c3"},
"d2", {\ar@{.} "d3"},
"53", {\ar"63"},
"63", {\ar"73"},
"73", {\ar"83"},
"83", {\ar"93"},
"a3", {\ar"b3"},
"b3", {\ar"c3"},
"c3", {\ar"d3"},
"c3", {\ar"f3"},
"63", {\ar"54"},
"73", {\ar"64"},
"83", {\ar"74"},
"93", {\ar"84"},
"b3", {\ar"a4"},
"c3", {\ar"b4"},
"d3", {\ar"c4"},
"f3", {\ar"c4"},
"53", {\ar@{.} "54"},
"63", {\ar@{.} "64"},
"73", {\ar@{.} "74"},
"83", {\ar@{.} "84"},
"93", {\ar@{.} "94"},
"a3", {\ar@{.} "a4"},
"b3", {\ar@{.} "b4"},
"c3", {\ar@/^1pc/@{.} "c4"},
"d3", {\ar@{.} "d4"},
"54", {\ar"64"},
"64", {\ar"74"},
"74", {\ar"84"},
"84", {\ar"94"},
"a4", {\ar"b4"},
"b4", {\ar"c4"},
"c4", {\ar"d4"},
"44", {\ar"35"},
"64", {\ar"55"},
"74", {\ar"65"},
"84", {\ar"75"},
"94", {\ar"85"},
"b4", {\ar"a5"},
"c4", {\ar"b5"},
"d4", {\ar"c5"},
"44", {\ar@{.} "45"},
"54", {\ar@{.} "55"},
"64", {\ar@{.} "65"},
"74", {\ar@{.} "75"},
"84", {\ar@{.} "85"},
"94", {\ar@{.} "95"},
"a4", {\ar@{.} "a5"},
"b4", {\ar@{.} "b5"},
"c4", {\ar@{.} "c5"},
"25", {\ar"35"},
"35", {\ar"45"},
"55", {\ar"65"},
"65", {\ar"75"},
"75", {\ar"85"},
"85", {\ar"95"},
"a5", {\ar"b5"},
"b5", {\ar"c5"},
"25", {\ar"e5"},
"25", {\ar"16"},
"35", {\ar"26"},
"45", {\ar"36"},
"65", {\ar"56"},
"75", {\ar"66"},
"85", {\ar"76"},
"95", {\ar"86"},
"b5", {\ar"a6"},
"e5", {\ar"26"},
"25", {\ar@/^1pc/@{.} "26"},
"35", {\ar@{.} "36"},
"45", {\ar@{.} "46"},
"55", {\ar@{.} "56"},
"65", {\ar@{.} "66"},
"75", {\ar@{.} "76"},
"85", {\ar@{.} "86"},
"95", {\ar@{.} "96"},
"a5", {\ar@{.} "a6"},
"16", {\ar"26"},
"26", {\ar"36"},
"36", {\ar"46"},
"56", {\ar"66"},
"66", {\ar"76"},
"76", {\ar"86"},
"86", {\ar"96"},
"26", {\ar"17"},
"36", {\ar"27"},
"46", {\ar"37"},
"66", {\ar"57"},
"76", {\ar"67"},
"86", {\ar"77"},
"16", {\ar@{.} "17"},
"26", {\ar@{.} "27"},
"36", {\ar@{.} "37"},
"46", {\ar@{.} "47"},
"56", {\ar@{.} "57"},
"66", {\ar@{.} "67"},
"76", {\ar@{.} "77"},
"17", {\ar"27"},
"27", {\ar"37"},
"37", {\ar"47"},
"57", {\ar"67"},
"67", {\ar"77"},
"27", {\ar"e7"},
"27", {\ar"18"},
"37", {\ar"28"},
"47", {\ar"38"},
"67", {\ar"58"},
"e7", {\ar"28"},
"17", {\ar@{.} "18"},
"27", {\ar@/^1pc/@{.} "28"},
"37", {\ar@{.} "38"},
"47", {\ar@{.} "48"},
"57", {\ar@{.} "58"},
"18", {\ar"28"},
"28", {\ar"38"},
"38", {\ar"48"},
"28", {\ar"19"},
"38", {\ar"29"},
"48", {\ar"39"},
"18", {\ar@{.} "19"},
"28", {\ar@{.} "29"},
"38", {\ar@{.} "39"},
"48", {\ar@{.} "49"},
"19", {\ar"29"},
"29", {\ar"39"},
"39", {\ar"49"},
"29", {\ar"e9"},
"29", {\ar"110"},
"39", {\ar"210"},
"e9", {\ar"210"},
"19", {\ar@{.} "110"},
"29", {\ar@/^1pc/@{.} "210"},
"110", {\ar"210"},
\end{xy}
}\\[10pt]
\mbox{\footnotesize type $D_n$ ($n$ is odd), where $l=\frac{n-1}{2}$ and $*^j=*(j)$ for $*=R^i$, $X^i$, $\tau R^i$, or $\tau X^i$}
\end{gathered}
\end{equation*}

In the above Auslander--Reiten quiver, the last framed object in the bottom (respectively, top) row is $\tau X^{1, n-3}$ (respectively, $\tau X^{1, n-2}$).

\[
\begin{gathered}
\begin{smallmatrix}
 & \mathclap{\mbox{\small $R^0$}} & & \qquad & & \mathclap{\mbox{\small $R^i$ ($i\geq 1$)}} & & \qquad \quad & \mathclap{\mbox{\small $X^i$ ($i\geq 1$)}} & \qquad \quad & & \mathclap{\mbox{\small $\tau R^i$ ($i\geq 1$)}} & & \qquad & \mathclap{\mbox{\small $\tau X^1$}} & \qquad & & & \mathclap{\mbox{\small $\tau X^i$ ($i\geq 2$)}} & & & & \\[6pt]
\mbox{\tiny $x$} & & \mbox{\tiny $f_1$} & \qquad & \mbox{\tiny $x$} & & \mbox{\tiny $f_1$} & \qquad \quad & f_1 & \qquad \quad & \mbox{\tiny $x$} & & \mbox{\tiny $f_1$} & \qquad & \mbox{\tiny $x$} & \qquad & \mbox{\tiny $x$} & & \mbox{\tiny $f_1$} & & \mbox{\tiny $x$} & & \\[2pt]
\bigbullet & \sequal & \bigbullet & \qquad & \bigbullet & \sequal & \bigbullet & \qquad \quad & \bigbullet & \qquad \quad & & & \times & \qquad & \times & \qquad & \bigbullet & \sequal & \bigbullet & & & & \makebox[0pt][l]{\tiny $0$} \\
 & & \times & \qquad & & & \times & \qquad \quad & \times & \qquad \quad & & & \times & \qquad & & \qquad & & & \times & & & & \makebox[0pt][l]{\tiny $1$} \\
 & & \bigbullet & \qquad & & & \bigbullet & \qquad \quad & \bigbullet & \qquad \quad & & & \bigbullet & \qquad & & \qquad & & & \bigbullet & & & & \makebox[0pt][l]{\tiny $2$} \\[-5pt]
 & & \vdots & \qquad & & & \vdots & \qquad \quad & \vdots & \qquad \quad & & & \vdots & \qquad & & \qquad & & & \vdots & & & & \\
 & & \bigbullet & \qquad & & & \bigbullet & \qquad \quad & \bigbullet & \qquad \quad & & & \bigbullet & \qquad & & \qquad & & & \bigbullet & & & & \makebox[0pt][l]{\tiny $n-2i-3$} \\
 & & \times & \qquad & & & \times & \qquad \quad & \times & \qquad \quad & & & \times & \qquad & & \qquad & & & \times & & & & \makebox[0pt][l]{\tiny $n-2i-2$} \\
 & & \bigbullet & \qquad & & & \bigbullet & \qquad \quad & \bigbullet & \qquad \quad & & & \bigbullet & \qquad & & \qquad & & & \bigbullet & & & & \makebox[0pt][l]{\tiny $n-2i-1$} \\
 & & \times & \qquad & & & \bigbullet & \qquad \quad & \bigbullet & \qquad \quad & \bigbullet & \sequal & \bigbullet & \qquad & & \qquad & & & \bigbullet & \sequal & \bigbullet & & \makebox[0pt][l]{\tiny $n-2i$} \\
 & & \bigbullet & \qquad & & & \bigbullet & \qquad \quad & \bigbullet & \qquad \quad & & & \bigbullet & \qquad & & \qquad & & & \bigbullet & & & & \makebox[0pt][l]{\tiny $n-2i+1$} \\[-5pt]
 & & \vdots & \qquad & & & \vdots & \qquad \quad & \vdots & \qquad \quad & & & \vdots & \qquad & & \qquad & & & \vdots & & & & \\
 & & \times & \qquad & & & \bigbullet & \qquad \quad & \bigbullet & \qquad \quad & & & \bigbullet & \qquad & & \qquad & & & \bigbullet & & & & \makebox[0pt][l]{\tiny $n-4$} \\
 & & \bigbullet & \qquad & & & \bigbullet & \qquad \quad & \bigbullet & \qquad \quad & & & \bigbullet & \qquad & & \qquad & & & \bigbullet & & & & \makebox[0pt][l]{\tiny $n-3$} \\
\bigbullet & \sequal & \bigbullet & \qquad & \bigbullet & & \bigbullet & \qquad \quad & \bigbullet & \qquad \quad & & & \bigbullet & \qquad & \bigbullet & \qquad & \bigbullet & & \bigbullet & & & & \makebox[0pt][l]{\tiny $n-2$} \\
 & & \bigbullet & \qquad & & & \bigbullet & \qquad \quad & \bigbullet & \qquad \quad & & & \bigbullet & \qquad & & \qquad & & & \bigbullet & & & & \makebox[0pt][l]{\tiny $n-1$} \\[-5pt]
 & & \vdots & \qquad & & & \vdots & \qquad \quad & \vdots & \qquad \quad & & & \vdots & \qquad & & \qquad & & & \vdots & & & & \\
 & & \bigbullet & \qquad & & & \bigbullet & \qquad \quad & \bigbullet & \qquad \quad & & & \bigbullet & \qquad & & \qquad & & & \bigbullet & & & & \makebox[0pt][l]{\tiny $2n-2i-3$} \\
 & & \bigbullet & \qquad & & & \bigbullet & \qquad \quad & \bigbullet & \qquad \quad & \bigbullet & & \bigbullet & \qquad & & \qquad & & & \bigbullet & & \bigbullet & & \makebox[0pt][l]{\tiny $2n-2i-2$} \\
 & & \bigbullet & \qquad & & & \bigbullet & \qquad \quad & \bigbullet & \qquad \quad & & & \bigbullet & \qquad & & \qquad & & & \bigbullet & & & & \makebox[0pt][l]{\tiny $2n-2i-1$} \\[-5pt]
 & & \vdots & \qquad & & & \vdots & \qquad \quad & \vdots & \qquad \quad & & & \vdots & \qquad & & \qquad & & & \vdots & & & & \\
 & & \bigbullet & \qquad & & & \bigbullet & \qquad \quad & \bigbullet & \qquad \quad & & & \bigbullet & \qquad & & \qquad & & & \bigbullet & & & & \makebox[0pt][l]{\tiny $2n-5$} \\
\bigbullet & & \bigbullet & \qquad & \bigbullet & & \bigbullet & \qquad \quad & \bigbullet & \qquad \quad & & & \bigbullet & \qquad & \bigbullet & \qquad & \bigbullet & & \bigbullet & & & & \makebox[0pt][l]{\tiny $2n-4$} \\
 & & \bigbullet & \qquad & & & \bigbullet & \qquad \quad & \bigbullet & \qquad \quad & & & \bigbullet & \qquad & & \qquad & & & \bigbullet & & & & \makebox[0pt][l]{\tiny $2n-3$} \\[-5pt]
\vdots & & \vdots & \qquad & \vdots & & \vdots & \qquad \quad & \vdots & \qquad \quad & \vdots & & \vdots & \qquad & \vdots & \qquad & \vdots & & \vdots & & \vdots & &
\end{smallmatrix} \\[10pt]
\mbox{\footnotesize $k$-basis of $\Z$-graded modules for type $D_n$ ($n$ is odd)}
\end{gathered}
\]

From now on, the top row of each picture represents the component of degree $0$.

\begin{equation*}
\begin{gathered}
\scalebox{0.6}{
\begin{xy} 0;<20pt,0pt>:<0pt,20pt>::
(6,6) *+{\cdots},
(26,6) *+{\cdots},
(8,10) *+{R^{1, -1}} ="60",
(8,6) *+{X^{-2}} ="41",
(10,8) *+{\tau R^{3, -2}} ="51",
(12,10) *+[F]{\tau R^{1, 1}} ="61",
(14,12) *+{R^{0, 1}} ="71",
(10,6) *+{R^{2, -2}} ="81",
(8,2) *+{\tau R^1} ="22",
(10,4) *+{R^{3, -3}} ="32",
(12,6) *+{X^{-1}} ="42",
(14,8) *+{R^{3, -2}} ="52",
(16,10) *+{R^{1, 1}} ="62",
(14,6) *+{R^{2, -1}} ="82",
(10,0) *+{R^0} ="13",
(12,2) *+{R^1} ="23",
(14,4) *+{\tau R^{3, -1}} ="33",
(16,6) *+{X} ="43",
(18,8) *+{\tau R^3} ="53",
(20,10) *+[F]{\tau R^{1, 3}} ="63",
(22,12) *+{R^{0, 3}} ="73",
(18,6) *+[F]{R^2} ="83",
(16,2) *+[F]{\tau R^{1, 2}} ="24",
(18,4) *+[F]{R^{3, -1}} ="34",
(20,6) *+{X^1} ="44",
(22,8) *+[F]{R^3} ="54",
(24,10) *+{R^{1, 3}} ="64",
(22,6) *+{R^{2, 1}} ="84",
(18,0) *+{R^{0, 2}} ="15",
(20,2) *+{R^{1, 2}} ="25",
(22,4) *+{\tau R^{3, 1}} ="35",
(24,6) *+{X^2} ="45",
(24,2) *+{\tau R^{1, 4}} ="26",
"60", {\ar"51"},
"60", {\ar@{.} "61"},
"41", {\ar"51"},
"51", {\ar"61"},
"61", {\ar"71"},
"41", {\ar"81"},
"41", {\ar"32"},
"51", {\ar"42"},
"61", {\ar"52"},
"71", {\ar"62"},
"81", {\ar"42"},
"41", {\ar@/^1pc/@{.} "42"},
"51", {\ar@{.} "52"},
"61", {\ar@{.} "62"},
"81", {\ar@/_1pc/@{.} "82"},
"22", {\ar"32"},
"32", {\ar"42"},
"42", {\ar"52"},
"52", {\ar"62"},
"42", {\ar"82"},
"22", {\ar"13"},
"32", {\ar"23"},
"42", {\ar"33"},
"52", {\ar"43"},
"62", {\ar"53"},
"82", {\ar"43"},
"22", {\ar@{.} "23"},
"32", {\ar@{.} "33"},
"42", {\ar@/^1pc/@{.} "43"},
"52", {\ar@{.} "53"},
"62", {\ar@{.} "63"},
"82", {\ar@/_1pc/@{.} "83"},
"13", {\ar"23"},
"23", {\ar"33"},
"33", {\ar"43"},
"43", {\ar"53"},
"53", {\ar"63"},
"63", {\ar"73"},
"43", {\ar"83"},
"33", {\ar"24"},
"43", {\ar"34"},
"53", {\ar"44"},
"63", {\ar"54"},
"73", {\ar"64"},
"83", {\ar"44"},
"23", {\ar@{.} "24"},
"33", {\ar@{.} "34"},
"43", {\ar@/^1pc/@{.} "44"},
"53", {\ar@{.} "54"},
"63", {\ar@{.} "64"},
"83", {\ar@/_1pc/@{.} "84"},
"24", {\ar"34"},
"34", {\ar"44"},
"44", {\ar"54"},
"54", {\ar"64"},
"44", {\ar"84"},
"24", {\ar"15"},
"34", {\ar"25"},
"44", {\ar"35"},
"54", {\ar"45"},
"84", {\ar"45"},
"24", {\ar@{.} "25"},
"34", {\ar@{.} "35"},
"44", {\ar@/^1pc/@{.} "45"},
"15", {\ar"25"},
"25", {\ar"35"},
"35", {\ar"45"},
"35", {\ar"26"},
"25", {\ar@{.} "26"},
\end{xy}
}\\[10pt]
\mbox{\footnotesize type $E_6$, where $*^j=*(j)$ for $*=R^0$, $R^1$, $R^2$, $R^3$, $\tau R^1$, $\tau R^3$, or $X$}
\end{gathered}
\end{equation*}

\[
\begin{gathered}
\begin{smallmatrix}
\mathclap{\mbox{\small $R^0$}} & \quad & \mathclap{\mbox{\small $R^1$}} & \quad & \mathclap{\mbox{\small $R^2$}} & \quad & \mathclap{\mbox{\small $R^3$}} & \quad & \mathclap{\mbox{\small $\tau R^1$}} & & \quad & \mathclap{\mbox{\small $\tau R^{3, -1}$}} & & \quad & & \mathclap{\mbox{\small $X$}} & \\[6pt]
\bigbullet & \quad & \bigbullet & \quad & \bigbullet & \quad & \bigbullet & \quad & \times & \quad & \bigbullet & & \times & \quad & \bigbullet & & \times \\
\times & \quad & \times & \quad & \times & \quad & \bigbullet & \quad & \times & \quad & \times & & \bigbullet & \quad & \times & & \bigbullet \\
\times & \quad & \times & \quad & \bigbullet & \quad & \bigbullet & \quad & \times & \quad & \bigbullet & \sequal & \bigbullet & \quad & \bigbullet & \sequal & \bigbullet \\
\bigbullet & \quad & \bigbullet & \quad & \bigbullet & \quad & \bigbullet & \quad & \bigbullet & \quad & \bigbullet & & \times & \quad & \bigbullet & & \bigbullet \\
\bigbullet & \quad & \bigbullet & \quad & \bigbullet & \quad & \bigbullet & \quad & \bigbullet & \quad & \bigbullet & & \bigbullet & \quad & \bigbullet & & \bigbullet \\
\times & \quad & \bigbullet & \quad & \bigbullet & \quad & \bigbullet & \quad & \times & \quad & \bigbullet & & \bigbullet & \quad & \bigbullet & & \bigbullet \\
\bigbullet & \quad & \bigbullet & \quad & \bigbullet & \quad & \bigbullet & \quad & \bigbullet & \quad & \bigbullet & & \bigbullet & \quad & \bigbullet & & \bigbullet \\[-5pt]
\vdots & \quad & \vdots & \quad & \vdots & \quad & \vdots & \quad & \vdots & \quad & \vdots & & \vdots & \quad & \vdots & & \vdots
\end{smallmatrix} \\[10pt]
\mbox{\footnotesize $k$-basis of $\Z$-graded modules for type $E_6$}
\end{gathered}
\]

\begin{equation*}
\begin{gathered}
\scalebox{0.6}{
\begin{xy} 0;<20pt,0pt>:<0pt,20pt>::
(6,6) *+{\cdots},
(38,6) *+{\cdots},
(8,10) *+{\tau R^{2, -1}} ="60",
(10,12) *+{Y^{-3}} ="70",
(8,6) *+{W^{-1}} ="41",
(10,8) *+{U^{-1}} ="51",
(12,10) *+{R^{2, -1}} ="61",
(14,12) *+{X^{-1}} ="71",
(10,6) *+{\tau Z^{-2}} ="81",
(8,2) *+{\tau R^1} ="22",
(10,4) *+{R^{3, -2}} ="32",
(12,6) *+{\tau W} ="42",
(14,8) *+{\tau U} ="52",
(16,10) *+{\tau R^2} ="62",
(18,12) *+{Y^{-2}} ="72",
(14,6) *+{Z^{-2}} ="82",
(10,0) *+{R^0} ="13",
(12,2) *+{R^1} ="23",
(14,4) *+{\tau R^{3, -1}} ="33",
(16,6) *+{W} ="43",
(18,8) *+{U} ="53",
(20,10) *+{R^2} ="63",
(22,12) *+{X} ="73",
(18,6) *+{\tau Z^{-1}} ="83",
(16,2) *+[F]{\tau R^{1, 1}} ="24",
(18,4) *+{R^{3, -1}} ="34",
(20,6) *+{\tau W^1} ="44",
(22,8) *+{\tau U^1} ="54",
(24,10) *+[F]{\tau R^{2, 1}} ="64",
(26,12) *+[F]{Y^{-1}} ="74",
(22,6) *+{Z^{-1}} ="84",
(18,0) *+{R^{0, 1}} ="15",
(20,2) *+{R^{1, 1}} ="25",
(22,4) *+{\tau R^3} ="35",
(24,6) *+{W^1} ="45",
(26,8) *+{U^1} ="55",
(28,10) *+{R^{2, 1}} ="65",
(30,12) *+{X^1} ="75",
(26,6) *+{\tau Z} ="85",
(24,2) *+[F]{\tau R^{1, 2}} ="26",
(26,4) *+[F]{R^3} ="36",
(28,6) *+{\tau W^2} ="46",
(30,8) *+{\tau U^2} ="56",
(32,10) *+{\tau R^{2, 2}} ="66",
(34,12) *+[F]{Y} ="76",
(30,6) *+[F]{Z} ="86",
(26,0) *+{R^{0, 2}} ="17",
(28,2) *+{R^{1, 2}} ="27",
(30,4) *+{\tau R^{3, 1}} ="37",
(32,6) *+{W^2} ="47",
(34,8) *+{U^2} ="57",
(36,10) *+{R^{2, 2}} ="67",
(34,6) *+{\tau Z^1} ="87",
(32,2) *+{\tau R^{1, 3}} ="28",
(34,4) *+{R^{3, 1}} ="38",
(36,6) *+{\tau W^3} ="48",
(34,0) *+{R^{0, 3}} ="19",
(36,2) *+{R^{1, 3}} ="29",
"60", {\ar"70"},
"60", {\ar"51"},
"70", {\ar"61"},
"60", {\ar@{.} "61"},
"70", {\ar@{.} "71"},
"41", {\ar"51"},
"51", {\ar"61"},
"61", {\ar"71"},
"41", {\ar"81"},
"41", {\ar"32"},
"51", {\ar"42"},
"61", {\ar"52"},
"71", {\ar"62"},
"81", {\ar"42"},
"41", {\ar@/^1pc/@{.} "42"},
"51", {\ar@{.} "52"},
"61", {\ar@{.} "62"},
"71", {\ar@{.} "72"},
"81", {\ar@/_1pc/@{.} "82"},
"22", {\ar"32"},
"32", {\ar"42"},
"42", {\ar"52"},
"52", {\ar"62"},
"62", {\ar"72"},
"42", {\ar"82"},
"22", {\ar"13"},
"32", {\ar"23"},
"42", {\ar"33"},
"52", {\ar"43"},
"62", {\ar"53"},
"72", {\ar"63"},
"82", {\ar"43"},
"22", {\ar@{.} "23"},
"32", {\ar@{.} "33"},
"42", {\ar@/^1pc/@{.} "43"},
"52", {\ar@{.} "53"},
"62", {\ar@{.} "63"},
"72", {\ar@{.} "73"},
"82", {\ar@/_1pc/@{.} "83"},
"13", {\ar"23"},
"23", {\ar"33"},
"33", {\ar"43"},
"43", {\ar"53"},
"53", {\ar"63"},
"63", {\ar"73"},
"43", {\ar"83"},
"33", {\ar"24"},
"43", {\ar"34"},
"53", {\ar"44"},
"63", {\ar"54"},
"73", {\ar"64"},
"83", {\ar"44"},
"23", {\ar@{.} "24"},
"33", {\ar@{.} "34"},
"43", {\ar@/^1pc/@{.} "44"},
"53", {\ar@{.} "54"},
"63", {\ar@{.} "64"},
"73", {\ar@{.} "74"},
"83", {\ar@/_1pc/@{.} "84"},
"24", {\ar"34"},
"34", {\ar"44"},
"44", {\ar"54"},
"54", {\ar"64"},
"64", {\ar"74"},
"44", {\ar"84"},
"24", {\ar"15"},
"34", {\ar"25"},
"44", {\ar"35"},
"54", {\ar"45"},
"64", {\ar"55"},
"74", {\ar"65"},
"84", {\ar"45"},
"24", {\ar@{.} "25"},
"34", {\ar@{.} "35"},
"44", {\ar@/^1pc/@{.} "45"},
"54", {\ar@{.} "55"},
"64", {\ar@{.} "65"},
"74", {\ar@{.} "75"},
"84", {\ar@/_1pc/@{.} "85"},
"15", {\ar"25"},
"25", {\ar"35"},
"35", {\ar"45"},
"45", {\ar"55"},
"55", {\ar"65"},
"65", {\ar"75"},
"45", {\ar"85"},
"35", {\ar"26"},
"45", {\ar"36"},
"55", {\ar"46"},
"65", {\ar"56"},
"75", {\ar"66"},
"85", {\ar"46"},
"25", {\ar@{.} "26"},
"35", {\ar@{.} "36"},
"45", {\ar@/^1pc/@{.} "46"},
"55", {\ar@{.} "56"},
"65", {\ar@{.} "66"},
"75", {\ar@{.} "76"},
"85", {\ar@/_1pc/@{.} "86"},
"26", {\ar"36"},
"36", {\ar"46"},
"46", {\ar"56"},
"56", {\ar"66"},
"66", {\ar"76"},
"46", {\ar"86"},
"26", {\ar"17"},
"36", {\ar"27"},
"46", {\ar"37"},
"56", {\ar"47"},
"66", {\ar"57"},
"76", {\ar"67"},
"86", {\ar"47"},
"26", {\ar@{.} "27"},
"36", {\ar@{.} "37"},
"46", {\ar@/^1pc/@{.} "47"},
"56", {\ar@{.} "57"},
"66", {\ar@{.} "67"},
"86", {\ar@/_1pc/@{.} "87"},
"17", {\ar"27"},
"27", {\ar"37"},
"37", {\ar"47"},
"47", {\ar"57"},
"57", {\ar"67"},
"47", {\ar"87"},
"37", {\ar"28"},
"47", {\ar"38"},
"57", {\ar"48"},
"87", {\ar"48"},
"27", {\ar@{.} "28"},
"37", {\ar@{.} "38"},
"47", {\ar@/^1pc/@{.} "48"},
"28", {\ar"38"},
"38", {\ar"48"},
"28", {\ar"19"},
"38", {\ar"29"},
"28", {\ar@{.} "29"},
"19", {\ar"29"},
\end{xy}
}\\[10pt]
\mbox{\footnotesize type $E_7$, where $*^j=*(j)$ for $*=R^0$, $R^1$, $R^2$, $R^3$, $X$, $Y$, $Z$, $W$, $U$, $\tau R^1$, $\tau R^2$, $\tau R^3$, $\tau Z$, $\tau W$, or $\tau U$}
\end{gathered}
\end{equation*}

\[
\begin{gathered}
\begin{smallmatrix}
 & \mathclap{\mbox{\small $R^0$}} & & \quad & & \mathclap{\mbox{\small $R^1$}} & & \quad & & \mathclap{\mbox{\small $R^2$}} & & \quad & & \mathclap{\mbox{\small $R^3$}} & & \quad & \mathclap{\mbox{\small $X$}} & \quad & \mathclap{\mbox{\small $Y$}} & \quad & \mathclap{\mbox{\small $Z$}} & \quad & & & & \mathclap{\mbox{\small $W$}} & & & & \quad & & & \mathclap{\mbox{\small $U$}} & & \\[6pt]
\mbox{\tiny $y$} & & \mbox{\tiny $f_1$} & \quad & \mbox{\tiny $y$} & & \mbox{\tiny $f_1$} & \quad & \mbox{\tiny $y$} & & \mbox{\tiny $f_1$} & \quad & \mbox{\tiny $y$} & & \mbox{\tiny $f_1$} & \quad & \mbox{\tiny $f_1$} & \quad & \mbox{\tiny $y$} & \quad & \mbox{\tiny $f_1$} & \quad &  \mbox{\tiny $y$} & & \mbox{\tiny $f_1$} & & \mbox{\tiny $f_1$} & & \mbox{\tiny $y$} & \quad & \mbox{\tiny $y$} & & \mbox{\tiny $f_1$} & & \mbox{\tiny $f_1$} \\[2pt]
\bigbullet & \sequal & \bigbullet & \quad & \bigbullet & \sequal & \bigbullet & \quad & \bigbullet & \sequal & \bigbullet & \quad & \bigbullet & \sequal & \bigbullet & \quad & \bigbullet & \quad & \bigbullet & \quad & \bigbullet & \quad & \bigbullet & \sequal &\bigbullet & & \times & & & \quad & \bigbullet & \sequal & \bigbullet & \sequal & \bigbullet \\
 & & \times & \quad & & & \times & \quad & & & \times & \quad & & & \bigbullet & \quad & \times & \quad & & \quad & \bigbullet & \quad & & & \bigbullet & \sequal & \bigbullet & \sequal & \bigbullet & \quad & & & \bigbullet & & \times \\
\bigbullet & \sequal & \bigbullet & \quad & \bigbullet & \sequal & \bigbullet & \quad & \bigbullet & & \bigbullet & \quad & \bigbullet & & \bigbullet & \quad & \bigbullet & \quad & \bigbullet & \quad & \bigbullet & \quad & \bigbullet & \sequal & \bigbullet & & \bigbullet & & & \quad & \bigbullet & \sequal & \bigbullet & & \bigbullet \\
 & & \bigbullet & \quad &  & & \bigbullet & \quad & & & \bigbullet & \quad & & & \bigbullet & \quad & \bigbullet & \quad & & \quad & \bigbullet & \quad & & & \bigbullet & & \bigbullet & & \bigbullet & \quad & & & \bigbullet & & \bigbullet \\
\bigbullet & \sequal & \bigbullet & \quad & \bigbullet & & \bigbullet & \quad & \bigbullet & & \bigbullet & \quad & \bigbullet & & \bigbullet & \quad & \bigbullet & \quad & \bigbullet & \quad & \bigbullet & \quad & \bigbullet & & \bigbullet & & \bigbullet & & & \quad & \bigbullet & & \bigbullet & & \bigbullet \\
 & & \bigbullet & \quad &  & & \bigbullet & \quad & & & \bigbullet & \quad & & & \bigbullet & \quad & \bigbullet & \quad & & \quad & \bigbullet & \quad &  & & \bigbullet & & \bigbullet & & \bigbullet & \quad & & & \bigbullet & & \bigbullet \\
\bigbullet & & \bigbullet & \quad & \bigbullet & & \bigbullet & \quad & \bigbullet & & \bigbullet & \quad & \bigbullet & & \bigbullet & \quad & \bigbullet & \quad & \bigbullet & \quad & \bigbullet & \quad & \bigbullet & & \bigbullet & & \bigbullet & & & \quad & \bigbullet & & \bigbullet & & \bigbullet \\[-5pt]
\vdots & & \vdots & \quad & \vdots & & \vdots & \quad & \vdots & & \vdots & \quad & \vdots & & \vdots & \quad & \vdots & \quad & \vdots & \quad & \vdots & \quad & \vdots & & \vdots & & \vdots & & \vdots & \quad & \vdots & & \vdots & & \vdots
\end{smallmatrix} \\[10pt]
\begin{smallmatrix}
 & \mathclap{\mbox{\small $\tau R^1$}} & & \quad & & \mathclap{\mbox{\small $\tau R^2$}} & & \quad & & & & \mathclap{\mbox{\small $\tau R^{3, -1}$}} & & & & \quad & & & \mathclap{\mbox{\small $\tau Z^{-1}$}} & & & \quad & & & & \mathclap{\mbox{\small $\tau W$}} & & & & \quad & & & \mathclap{\mbox{\small $\tau U$}} & & \\[6pt]
\mbox{\tiny $y$} & & \mbox{\tiny $f_1$} & \quad & \mbox{\tiny $y$} & & \mbox{\tiny $f_1$} & \quad & \mbox{\tiny $y$} & & \mbox{\tiny $f_1$} & & \mbox{\tiny $f_1$} & & \mbox{\tiny $y$} & \quad & \mbox{\tiny $y$} & & \mbox{\tiny $f_1$} & & \mbox{\tiny $y$} & \quad & \mbox{\tiny $y$} & & \mbox{\tiny $f_1$} & & \mbox{\tiny $f_1$} & & \mbox{\tiny $y$} & \quad & \mbox{\tiny $y$} & & \mbox{\tiny $f_1$} & & \mbox{\tiny $y$} \\[2pt]
\times & & \times & \quad & \times & & \times & \quad & \bigbullet & \sequal & \bigbullet & & \times & & & \quad & \bigbullet & \sequal & \bigbullet & & & \quad & \times & & \times & & \times & & & \quad & & & \times & & \times \\
 & & \times & \quad & & & \bigbullet & \quad & & & \bigbullet & \sequal & \bigbullet & \sequal & \bigbullet & \quad & & & \bigbullet & \sequal & \bigbullet & \quad & & & \bigbullet & \sequal & \bigbullet & \sequal & \bigbullet & \quad & \bigbullet & \sequal & \bigbullet & & \\
\bigbullet & \sequal & \bigbullet & \quad & \bigbullet & \sequal & \bigbullet & \quad & \bigbullet & \sequal & \bigbullet & & \bigbullet & & & \quad & \bigbullet & \sequal & \bigbullet & & & \quad & \bigbullet & \sequal & \bigbullet & & \bigbullet & & & \quad & & & \bigbullet & \sequal & \bigbullet \\
 & & \bigbullet & \quad & & & \bigbullet & \quad & & & \bigbullet & & \bigbullet & \sequal & \bigbullet & \quad & & & \bigbullet & & \bigbullet & \quad & & & \bigbullet & & \bigbullet & \sequal & \bigbullet & \quad & \bigbullet & & \bigbullet & & \\
\bigbullet & \sequal & \bigbullet & \quad & \bigbullet & & \bigbullet & \quad & \bigbullet & & \bigbullet & & \bigbullet & & & \quad & \bigbullet & & \bigbullet & & & \quad & \bigbullet & & \bigbullet & & \bigbullet & & & \quad & & & \bigbullet & & \bigbullet \\
 & & \bigbullet & \quad & & & \bigbullet & \quad & & & \bigbullet & & \bigbullet & & \bigbullet & \quad & & & \bigbullet & & \bigbullet & \quad & & & \bigbullet & & \bigbullet & & \bigbullet & \quad & \bigbullet & & \bigbullet & & \\
\bigbullet & & \bigbullet & \quad & \bigbullet & & \bigbullet & \quad & \bigbullet & & \bigbullet & & \bigbullet & & & \quad & \bigbullet & & \bigbullet & & & \quad & \bigbullet & & \bigbullet & & \bigbullet & & & \quad & & & \bigbullet & & \bigbullet \\[-5pt]
\vdots & & \vdots & \quad & \vdots & & \vdots & \quad & \vdots & & \vdots & & \vdots & & \vdots & \quad & \vdots & & \vdots & & \vdots & \quad & \vdots & & \vdots & & \vdots & & \vdots & \quad & \vdots & & \vdots & & \vdots
\end{smallmatrix} \\[10pt]
\mbox{\footnotesize $k$-basis of $\Z$-graded modules for type $E_7$}
\end{gathered}
\]

\begin{equation*}
\begin{gathered}
\strut\kern-3em\scalebox{0.60}{
\begin{xy} 0;<20pt,0pt>:<0pt,20pt>::
(6,7) *+{\cdots},
(46,7) *+{\cdots},
(8,14) *+{R^{2, -2}} ="80",
(8,10) *+{\tau Z^{-1}} ="61",
(10,12) *+{\tau W^{-1}} ="71",
(12,14) *+{\tau R^{2, -1}} ="81",
(10,10) *+{R^{4, -4}} ="91",
(8,6) *+{X^{-1}} ="42",
(10,8) *+{Y^{-1}} ="52",
(12,10) *+{Z^{-1}} ="62",
(14,12) *+{W^{-1}} ="72",
(16,14) *+{R^{2, -1}} ="82",
(14,10) *+{\tau R^{4, -3}} ="92",
(8,2) *+{\tau R^1} ="23",
(10,4) *+{R^{3, -3}} ="33",
(12,6) *+{\tau X} ="43",
(14,8) *+{\tau Y} ="53",
(16,10) *+{\tau Z} ="63",
(18,12) *+{\tau W} ="73",
(20,14) *+{\tau R^2} ="83",
(18,10) *+{R^{4, -3}} ="93",
(10,0) *+{R^0} ="14",
(12,2) *+{R^1} ="24",
(14,4) *+{\tau R^{3, -2}} ="34",
(16,6) *+{X} ="44",
(18,8) *+{Y} ="54",
(20,10) *+{Z} ="64",
(22,12) *+{W} ="74",
(24,14) *+{R^2} ="84",
(22,10) *+{\tau R^{4, -2}} ="94",
(16,2) *+[F]{\tau R^{1, 1}} ="25",
(18,4) *+{R^{3, -2}} ="35",
(20,6) *+{\tau X^1} ="45",
(22,8) *+{\tau Y^1} ="55",
(24,10) *+{\tau Z^1} ="65",
(26,12) *+{\tau W^1} ="75",
(28,14) *+[F]{\tau R^{2, 1}} ="85",
(26,10) *+{R^{4, -2}} ="95",
(18,0) *+{R^{0, 1}} ="16",
(20,2) *+{R^{1, 1}} ="26",
(22,4) *+{\tau R^{3, -1}} ="36",
(24,6) *+{X^1} ="46",
(26,8) *+{Y^1} ="56",
(28,10) *+{Z^1} ="66",
(30,12) *+{W^1} ="76",
(32,14) *+{R^{2, 1}} ="86",
(30,10) *+{\tau R^{4, -1}} ="96",
(24,2) *+[F]{\tau R^{1, 2}} ="27",
(26,4) *+{R^{3, -1}} ="37",
(28,6) *+{\tau X^2} ="47",
(30,8) *+{\tau Y^2} ="57",
(32,10) *+{\tau Z^2} ="67",
(34,12) *+{\tau W^2} ="77",
(36,14) *+[F]{\tau R^{2, 2}} ="87",
(34,10) *+[F]{R^{4, -1}} ="97",
(26,0) *+{R^{0, 2}} ="18",
(28,2) *+{R^{1, 2}} ="28",
(30,4) *+{\tau R^3} ="38",
(32,6) *+{X^2} ="48",
(34,8) *+{Y^2} ="58",
(36,10) *+{Z^2} ="68",
(38,12) *+{W^2} ="78",
(40,14) *+{R^{2, 2}} ="88",
(38,10) *+{\tau R^4} ="98",
(32,2) *+[F]{\tau R^{1, 3}} ="29",
(34,4) *+[F]{R^3} ="39",
(36,6) *+{\tau X^3} ="49",
(38,8) *+{\tau Y^3} ="59",
(40,10) *+{\tau Z^3} ="69",
(42,12) *+{\tau W^3} ="79",
(44,14) *+{\tau R^{2, 3}} ="89",
(42,10) *+[F]{R^4} ="99",
(34,0) *+{R^{0, 3}} ="110",
(36,2) *+{R^{1, 3}} ="210",
(38,4) *+{\tau R^{3, 1}} ="310",
(40,6) *+{X^3} ="410",
(42,8) *+{Y^3} ="510",
(44,10) *+{Z^3} ="610",
(40,2) *+{\tau R^{1, 4}} ="211",
(42,4) *+{R^{3, 1}} ="311",
(44,6) *+{\tau X^4} ="411",
(42,0) *+{R^{0, 4}} ="112",
(44,2) *+{R^{1, 4}} ="212",
"80", {\ar"71"},
"80", {\ar@{.} "81"},
"61", {\ar"71"},
"71", {\ar"81"},
"61", {\ar"91"},
"61", {\ar"52"},
"71", {\ar"62"},
"81", {\ar"72"},
"91", {\ar"62"},
"61", {\ar@/^1pc/@{.} "62"},
"71", {\ar@{.} "72"},
"81", {\ar@{.} "82"},
"91", {\ar@/_1pc/@{.} "92"},
"42", {\ar"52"},
"52", {\ar"62"},
"62", {\ar"72"},
"72", {\ar"82"},
"62", {\ar"92"},
"42", {\ar"33"},
"52", {\ar"43"},
"62", {\ar"53"},
"72", {\ar"63"},
"82", {\ar"73"},
"92", {\ar"63"},
"42", {\ar@{.} "43"},
"52", {\ar@{.} "53"},
"62", {\ar@/^1pc/@{.} "63"},
"72", {\ar@{.} "73"},
"82", {\ar@{.} "83"},
"92", {\ar@/_1pc/@{.} "93"},
"23", {\ar"33"},
"33", {\ar"43"},
"43", {\ar"53"},
"53", {\ar"63"},
"63", {\ar"73"},
"73", {\ar"83"},
"63", {\ar"93"},
"23", {\ar"14"},
"33", {\ar"24"},
"43", {\ar"34"},
"53", {\ar"44"},
"63", {\ar"54"},
"73", {\ar"64"},
"83", {\ar"74"},
"93", {\ar"64"},
"23", {\ar@{.} "24"},
"33", {\ar@{.} "34"},
"43", {\ar@{.} "44"},
"53", {\ar@{.} "54"},
"63", {\ar@/^1pc/@{.} "64"},
"73", {\ar@{.} "74"},
"83", {\ar@{.} "84"},
"93", {\ar@/_1pc/@{.} "94"},
"14", {\ar"24"},
"24", {\ar"34"},
"34", {\ar"44"},
"44", {\ar"54"},
"54", {\ar"64"},
"64", {\ar"74"},
"74", {\ar"84"},
"64", {\ar"94"},
"34", {\ar"25"},
"44", {\ar"35"},
"54", {\ar"45"},
"64", {\ar"55"},
"74", {\ar"65"},
"84", {\ar"75"},
"94", {\ar"65"},
"24", {\ar@{.} "25"},
"34", {\ar@{.} "35"},
"44", {\ar@{.} "45"},
"54", {\ar@{.} "55"},
"64", {\ar@/^1pc/@{.} "65"},
"74", {\ar@{.} "75"},
"84", {\ar@{.} "85"},
"94", {\ar@/_1pc/@{.} "95"},
"25", {\ar"35"},
"35", {\ar"45"},
"45", {\ar"55"},
"55", {\ar"65"},
"65", {\ar"75"},
"75", {\ar"85"},
"65", {\ar"95"},
"25", {\ar"16"},
"35", {\ar"26"},
"45", {\ar"36"},
"55", {\ar"46"},
"65", {\ar"56"},
"75", {\ar"66"},
"85", {\ar"76"},
"95", {\ar"66"},
"25", {\ar@{.} "26"},
"35", {\ar@{.} "36"},
"45", {\ar@{.} "46"},
"55", {\ar@{.} "56"},
"65", {\ar@/^1pc/@{.} "66"},
"75", {\ar@{.} "76"},
"85", {\ar@{.} "86"},
"95", {\ar@/_1pc/@{.} "96"},
"16", {\ar"26"},
"26", {\ar"36"},
"36", {\ar"46"},
"46", {\ar"56"},
"56", {\ar"66"},
"66", {\ar"76"},
"76", {\ar"86"},
"66", {\ar"96"},
"36", {\ar"27"},
"46", {\ar"37"},
"56", {\ar"47"},
"66", {\ar"57"},
"76", {\ar"67"},
"86", {\ar"77"},
"96", {\ar"67"},
"26", {\ar@{.} "27"},
"36", {\ar@{.} "37"},
"46", {\ar@{.} "47"},
"56", {\ar@{.} "57"},
"66", {\ar@/^1pc/@{.} "67"},
"76", {\ar@{.} "77"},
"86", {\ar@{.} "87"},
"96", {\ar@/_1pc/@{.} "97"},
"27", {\ar"37"},
"37", {\ar"47"},
"47", {\ar"57"},
"57", {\ar"67"},
"67", {\ar"77"},
"77", {\ar"87"},
"67", {\ar"97"},
"27", {\ar"18"},
"37", {\ar"28"},
"47", {\ar"38"},
"57", {\ar"48"},
"67", {\ar"58"},
"77", {\ar"68"},
"87", {\ar"78"},
"97", {\ar"68"},
"27", {\ar@{.} "28"},
"37", {\ar@{.} "38"},
"47", {\ar@{.} "48"},
"57", {\ar@{.} "58"},
"67", {\ar@/^1pc/@{.} "68"},
"77", {\ar@{.} "78"},
"87", {\ar@{.} "88"},
"97", {\ar@/_1pc/@{.} "98"},
"18", {\ar"28"},
"28", {\ar"38"},
"38", {\ar"48"},
"48", {\ar"58"},
"58", {\ar"68"},
"68", {\ar"78"},
"78", {\ar"88"},
"68", {\ar"98"},
"38", {\ar"29"},
"48", {\ar"39"},
"58", {\ar"49"},
"68", {\ar"59"},
"78", {\ar"69"},
"88", {\ar"79"},
"98", {\ar"69"},
"28", {\ar@{.} "29"},
"38", {\ar@{.} "39"},
"48", {\ar@{.} "49"},
"58", {\ar@{.} "59"},
"68", {\ar@/^1pc/@{.} "69"},
"78", {\ar@{.} "79"},
"88", {\ar@{.} "89"},
"98", {\ar@/_1pc/@{.} "99"},
"29", {\ar"39"},
"39", {\ar"49"},
"49", {\ar"59"},
"59", {\ar"69"},
"69", {\ar"79"},
"79", {\ar"89"},
"69", {\ar"99"},
"29", {\ar"110"},
"39", {\ar"210"},
"49", {\ar"310"},
"59", {\ar"410"},
"69", {\ar"510"},
"79", {\ar"610"},
"99", {\ar"610"},
"29", {\ar@{.} "210"},
"39", {\ar@{.} "310"},
"49", {\ar@{.} "410"},
"59", {\ar@{.} "510"},
"69", {\ar@/^1pc/@{.} "610"},
"110", {\ar"210"},
"210", {\ar"310"},
"310", {\ar"410"},
"410", {\ar"510"},
"510", {\ar"610"},
"310", {\ar"211"},
"410", {\ar"311"},
"510", {\ar"411"},
"210", {\ar@{.} "211"},
"310", {\ar@{.} "311"},
"410", {\ar@{.} "411"},
"211", {\ar"311"},
"311", {\ar"411"},
"211", {\ar"112"},
"311", {\ar"212"},
"211", {\ar@{.} "212"},
"112", {\ar"212"},
\end{xy}
}\\[10pt]
\strut\kern-3em
\mbox{\footnotesize type $E_8$, where $*^j=*(j)$ for $*=R^0$, $R^1$, $R^2$, $R^3$, $R^4$, $X$, $Y$, $Z$, $W$, $\tau R^1$, $\tau R^2$, $\tau R^3$, $\tau R^4$, $\tau X$, $\tau Y$, $\tau Z$, or $\tau W$}
\end{gathered}
\end{equation*}

\[
\begin{gathered}
\begin{smallmatrix}
\mathclap{\mbox{\small $R^0$}} & \quad & \mathclap{\mbox{\small $R^1$}} & \quad & \mathclap{\mbox{\small $R^2$}} & \quad & \mathclap{\mbox{\small $R^3$}} & \quad & \mathclap{\mbox{\small $R^4$}} & \quad & & \mathclap{\mbox{\small $X$}} & & \quad & & & \mathclap{\mbox{\small $Y$}} & & & \quad & & & \mathclap{\mbox{\small $Z$}} & & & \quad & & \mathclap{\mbox{\small $W$}} & \\[6pt]
\bigbullet & \quad & \bigbullet & \quad & \bigbullet & \quad & \bigbullet & \quad & \bigbullet & \quad & \bigbullet & & \times & \quad & \bigbullet & & \times & & \times & \quad & \bigbullet & & \times & & \times & \quad & \bigbullet & \sequal & \bigbullet \\
\times & \quad & \times & \quad & \times & \quad & \times & \quad & \bigbullet & \quad & \times & & \times & \quad & \times & & \bigbullet & \sequal & \bigbullet & \quad & \times & & \bigbullet & \sequal & \bigbullet & \quad & \times & & \times \\
\times & \quad & \times & \quad & \times & \quad & \bigbullet & \quad & \bigbullet & \quad & \bigbullet & \sequal & \bigbullet & \quad & \bigbullet & \sequal & \bigbullet & & \times & \quad & \bigbullet & \sequal & \bigbullet & & \times & \quad & \times & & \bigbullet \\
\bigbullet & \quad & \bigbullet & \quad & \bigbullet & \quad & \bigbullet & \quad & \bigbullet & \quad & \bigbullet & & \times & \quad & \bigbullet & & \times & & \bigbullet & \quad & \bigbullet & & \times & & \bigbullet & \quad & \bigbullet & & \bigbullet \\
\times & \quad & \times & \quad & \bigbullet & \quad & \bigbullet & \quad & \bigbullet & \quad & \times & & \bigbullet & \quad & \times & & \bigbullet & & \bigbullet & \quad & \times & & \bigbullet & & \bigbullet & \quad & \bigbullet & & \times \\
\bigbullet & \quad & \bigbullet & \quad & \bigbullet & \quad & \bigbullet & \quad & \bigbullet & \quad & \bigbullet & & \bigbullet & \quad & \bigbullet & & \bigbullet & & \times & \quad & \bigbullet & & \bigbullet & & \bigbullet & \quad & \bigbullet & & \bigbullet \\
\bigbullet & \quad & \bigbullet & \quad & \bigbullet & \quad & \bigbullet & \quad & \bigbullet & \quad & \bigbullet & & \bigbullet & \quad & \bigbullet & & \bigbullet & & \bigbullet & \quad & \bigbullet & & \bigbullet & & \bigbullet & \quad & \bigbullet & & \bigbullet \\
\times & \quad & \bigbullet & \quad & \bigbullet & \quad & \bigbullet & \quad & \bigbullet & \quad & \bigbullet & & \bigbullet & \quad & \bigbullet & & \bigbullet & & \bigbullet & \quad & \bigbullet & & \bigbullet & & \bigbullet & \quad & \bigbullet & & \bigbullet \\
\bigbullet & \quad & \bigbullet & \quad & \bigbullet & \quad & \bigbullet & \quad & \bigbullet & \quad & \bigbullet & & \bigbullet & \quad & \bigbullet & & \bigbullet & & \bigbullet & \quad & \bigbullet & & \bigbullet & & \bigbullet & \quad & \bigbullet & & \bigbullet \\[-5pt]
\vdots & \quad & \vdots & \quad & \vdots & \quad & \vdots & \quad & \vdots & \quad & \vdots & & \vdots & \quad & \vdots & & \vdots & & \vdots & \quad & \vdots & & \vdots & & \vdots & \quad & \vdots & & \vdots
\end{smallmatrix} \\[10pt]
\begin{smallmatrix}
\mathclap{\mbox{\small $\tau R^1$}} & \quad \; & \mathclap{\mbox{\small $\tau R^2$}} & \quad \; & & \mathclap{\mbox{\small $\tau R^{3, -2}$}} & & \quad \; & & \mathclap{\mbox{\small $\tau R^{4, -2}$}} & & \quad & & \mathclap{\mbox{\small $\tau X$}} & & \quad & & \mathclap{\mbox{\small $\tau Y$}} & & \quad & & & \mathclap{\mbox{\small $\tau Z$}} & & & \quad & & \mathclap{\mbox{\small $\tau W$}} & \\[6pt]
\times & \quad \; & \times & \quad \; & \bigbullet & & \times & \quad \; & \bigbullet & & \times & \quad & \times & & \times & \quad & \times & & \times & \quad & \times & & \times & & \times & \quad & \times & & \times \\
\times & \quad \; & \times & \quad \; & \times & & \times & \quad \; & \times & & \bigbullet & \quad & \times & & \times & \quad & \times & & \times & \quad & \times & & \bigbullet & \sequal & \bigbullet & \quad & \times & & \bigbullet \\
\times & \quad \; & \bigbullet & \quad \; & \bigbullet & \sequal & \bigbullet & \quad \; & \bigbullet & \sequal & \bigbullet & \quad & \bigbullet & \sequal & \bigbullet & \quad & \bigbullet & \sequal & \bigbullet & \quad & \bigbullet & \sequal & \bigbullet & & \times & \quad & \bigbullet & \sequal & \bigbullet \\
\bigbullet & \quad \; & \bigbullet & \quad \; & \bigbullet & & \times & \quad \; & \bigbullet & & \times & \quad & \bigbullet & & \times & \quad & \bigbullet & & \times & \quad & \bigbullet & & \times & & \bigbullet & \quad & \bigbullet & & \times \\
\times & \quad \; & \times & \quad \; & \times & & \bigbullet & \quad \; & \times & & \bigbullet & \quad & \times & & \bigbullet & \quad & \times & & \bigbullet & \quad & \times & & \bigbullet & & \bigbullet & \quad & \times & & \bigbullet \\
\bigbullet & \quad \; & \bigbullet & \quad \; & \bigbullet & & \bigbullet & \quad \; & \bigbullet & & \bigbullet & \quad & \bigbullet & & \bigbullet & \quad & \bigbullet & & \bigbullet & \quad & \bigbullet & & \bigbullet & & \times & \quad & \bigbullet & & \bigbullet \\
\bigbullet & \quad \; & \bigbullet & \quad \; & \bigbullet & & \times & \quad \; & \bigbullet & & \bigbullet & \quad & \bigbullet & & \times & \quad & \bigbullet & & \bigbullet & \quad & \bigbullet & & \bigbullet & & \bigbullet & \quad & \bigbullet & & \bigbullet \\
\times & \quad \; & \bigbullet & \quad \; & \bigbullet & & \bigbullet & \quad \; & \bigbullet & & \bigbullet & \quad & \bigbullet & & \bigbullet & \quad & \bigbullet & & \bigbullet & \quad & \bigbullet & & \bigbullet & & \bigbullet & \quad & \bigbullet & & \bigbullet \\
\bigbullet & \quad \; & \bigbullet & \quad \; & \bigbullet & & \bigbullet & \quad \; & \bigbullet & & \bigbullet & \quad & \bigbullet & & \bigbullet & \quad & \bigbullet & & \bigbullet & \quad & \bigbullet & & \bigbullet & & \bigbullet & \quad & \bigbullet & & \bigbullet \\[-5pt]
\vdots & \quad \; & \vdots & \quad \; & \vdots & & \vdots & \quad \; & \vdots & & \vdots & \quad & \vdots & & \vdots & \quad & \vdots & & \vdots & \quad & \vdots & & \vdots & & \vdots & \quad & \vdots & & \vdots
\end{smallmatrix} \\[10pt]
\mbox{\footnotesize $k$-basis of $\Z$-graded modules for type $E_8$}
\end{gathered}
\]

\subsection{Countable Cohen--Macaulay representation type}

For type $A_\infty$ and $D_\infty$, the Auslander--Reiten quiver of the category $\CM\!_0 ^{\Z} R$ has precisely $m$ connected components $\mathrm{C}^j$, $j\in \Z/m\Z$. The component $\mathrm{C}^j$ is the image of $\mathrm{C}^0$ via the $j$th shift functor $(j)$.

\begin{equation*}
\begin{gathered}
\begin{minipage}{0.67\textwidth}
  \centering
  \scalebox{0.6}{
  \begin{xy} 0;<20pt,0pt>:<0pt,20pt>::
  (6,3) *+{\cdots},
  (14,8) *+{\vdots},
  (22,3) *+{\cdots},
  (8,6) *+{R^{3, -2m}} ="40",
  (8,2) *+{R^{1, -m}} ="21",
  (10,4) *+{R^{2, -m}} ="31",
  (12,6) *+{R^{3, -m}} ="41",
  (10,0) *+{R^0} ="12",
  (12,2) *+{R^1} ="22",
  (14,4) *+{R^2} ="32",
  (16,6) *+{R^3} ="42",
  (14,0) *+{R^{0, m}} ="13",
  (16,2) *+{R^{1, m}} ="23",
  (18,4) *+{R^{2, m}} ="33",
  (20,6) *+{R^{3, m}} ="43",
  (18,0) *+{R^{0, 2m}} ="14",
  (20,2) *+{R^{1, 2m}} ="24",
  "40", {\ar"31"},
  "40", {\ar@{.} "41"},
  "21", {\ar"31"},
  "31", {\ar"41"},
  "21", {\ar"12"},
  "31", {\ar"22"},
  "41", {\ar"32"},
  "21", {\ar@{.} "22"},
  "31", {\ar@{.} "32"},
  "41", {\ar@{.} "42"},
  "12", {\ar"22"},
  "22", {\ar"32"},
  "32", {\ar"42"},
  "22", {\ar"13"},
  "32", {\ar"23"},
  "42", {\ar"33"},
  "22", {\ar@{.} "23"},
  "32", {\ar@{.} "33"},
  "42", {\ar@{.} "43"},
  "13", {\ar"23"},
  "23", {\ar"33"},
  "33", {\ar"43"},
  "23", {\ar"14"},
  "33", {\ar"24"},
  "23", {\ar@{.} "24"},
  "14", {\ar"24"},
  \end{xy}
  }
\\[10pt]
\mbox{\footnotesize $\mathrm{C}^0$ for type $A_\infty$, where $*^j=*(j)$ for $*=R^i$}
\end{minipage}
\hfill
\begin{minipage}{0.33\textwidth}
  \centering
$\begin{smallmatrix}
\mathclap{\mbox{\quad \small $R^i$ ($i\geq 0$)}} & \\[6pt]
\mbox{\tiny $y$} & \mbox{\tiny $y$} \\[2pt]
& \circledcirc \\
& \bigbullet \\[-5pt]
& \vdots \\
& \bigbullet \\
\bigcirc & \bigbullet \\
\bigbullet & \bigbullet \\[-5pt]
\vdots & \vdots
\end{smallmatrix}$\\[10pt]
\mbox{\footnotesize $k$-basis of $\Z$-graded modules}
\end{minipage}
\end{gathered}
\end{equation*}

\begin{equation*}
\begin{gathered}
\begin{minipage}{0.7\textwidth}
  \centering

\end{minipage}
\hfill
\begin{minipage}{0.2\textwidth}
  \centering

\end{minipage}
\end{gathered}
\end{equation*}

The symbols $\bigcirc$ and $\circledcirc$ mean $1$ respectively $x^{-i}y$ in the bases. Each black circle $\bullet$ means the symbol in the previous row multiplied by $x$. The standard silting object is
\[
V=(\bigoplus_{i=1}^{\lceil \frac{l}{m}\rceil-1}\bigoplus_{j=1}^m R^{i, j-m})\oplus(\bigoplus_{j=1}^{l+m-\lceil \frac{l}{m}\rceil m}R^{\lceil \frac{l}{m}\rceil, j-m})\: .
\]

\begin{equation*}
\begin{gathered}
\scalebox{0.6}{
\begin{xy} 0;<20pt,0pt>:<0pt,20pt>::
(6,4) *+{\cdots},
(14,10) *+{\vdots},
(22,4) *+{\cdots},
(8,6) *+{R^{2, -m}} ="40",
(10,8) *+{\tau X^{3, -m}} ="50",
(8,2) *+{\tau R^{1, }} ="21",
(10,4) *+{X^{2, -m}} ="31",
(12,6) *+{\tau R^2} ="41",
(14,8) *+{X^{3, -m}} ="51",
(10,2) *+{R^0} ="61",
(10,0) *+{\tau X^1} ="12",
(12,2) *+{R^1} ="22",
(14,4) *+{\tau X^2} ="32",
(16,6) *+{R^2} ="42",
(18,8) *+{\tau X^3} ="52",
(14,0) *+{X^1} ="13",
(16,2) *+{\tau R^{1, m}} ="23",
(18,4) *+{X^2} ="33",
(20,6) *+{\tau R^{2, m}} ="43",
(18,2) *+{R^{0, m}} ="63",
(18,0) *+{\tau X^{1, m}} ="14",
(20,2) *+{R^{1, m}} ="24",
"40", {\ar"50"},
"40", {\ar"31"},
"50", {\ar"41"},
"40", {\ar@{.} "41"},
"50", {\ar@{.} "51"},
"21", {\ar"31"},
"31", {\ar"41"},
"41", {\ar"51"},
"21", {\ar"61"},
"21", {\ar"12"},
"31", {\ar"22"},
"41", {\ar"32"},
"51", {\ar"42"},
"61", {\ar"22"},
"21", {\ar@/^1pc/@{.} "22"},
"31", {\ar@{.} "32"},
"41", {\ar@{.} "42"},
"51", {\ar@{.} "52"},
"12", {\ar"22"},
"22", {\ar"32"},
"32", {\ar"42"},
"42", {\ar"52"},
"22", {\ar"13"},
"32", {\ar"23"},
"42", {\ar"33"},
"52", {\ar"43"},
"12", {\ar@{.} "13"},
"22", {\ar@{.} "23"},
"32", {\ar@{.} "33"},
"42", {\ar@{.} "43"},
"13", {\ar"23"},
"23", {\ar"33"},
"33", {\ar"43"},
"23", {\ar"63"},
"23", {\ar"14"},
"33", {\ar"24"},
"63", {\ar"24"},
"13", {\ar@{.} "14"},
"23", {\ar@/^1pc/@{.} "24"},
"14", {\ar"24"},
\end{xy}
}\\[10pt]
\mbox{\footnotesize $\mathrm{C}^0$ for type $D_\infty$, where $*^j=*(j)$ for $*=R^i$, $X^i$, $\tau R^i$, or $\tau X^i$}
\end{gathered}
\end{equation*}
\vskip1em
\[
\begin{gathered}
\begin{smallmatrix}
 & & \mathclap{\mbox{\small $R^0$}} & & \\[6pt]
\mbox{\tiny $y$} & & \mbox{\tiny $y$} & & \mbox{\tiny $x$} \\[2pt]
\bigcirc & & \mathllap{=\!}\mathrlap{=} & & \bigbullet \\
\bigbullet & & \smallodot & \sequal & \bigbullet \\
\bigbullet & & \bigbullet & & \bigbullet \\
\bigbullet & & \bigbullet & & \bigbullet \\
\bigbullet & & \bigbullet & & \bigbullet \\[-5pt]
\vdots & & \vdots & & \vdots
\end{smallmatrix}
\qquad
\begin{smallmatrix}
 & \mathclap{\mbox{\quad \small $R^i$ ($i\geq 1$)}} & & \\[6pt]
\mbox{\tiny $x$} & & \mbox{\tiny $y$} & \mbox{\tiny $y$} \\[2pt]
 & & & \circledcirc \\
 & & & \bigbullet \\[-5pt]
 & & & \vdots \\
 & & & \bigbullet \\
\bigbullet & \sequal & \bigcirc & \bigbullet \\
\bigbullet & & \bigbullet & \bigbullet \\[-5pt]
\vdots & & \vdots & \vdots
\end{smallmatrix}
\qquad \quad
\begin{smallmatrix}
\mathclap{\mbox{\quad \small $X^i$ ($i\geq 1$)}} & \\[6pt]
\mbox{\tiny $y$} & \mbox{\tiny $y$} \\[2pt]
 & \circledcirc \\
 & \bigbullet \\[-5pt]
 & \vdots \\
 & \bigbullet \\
\bigcirc & \bigbullet \\
\bigbullet & \bigbullet \\[-5pt]
\vdots & \vdots
\end{smallmatrix}
\qquad \quad
\begin{smallmatrix}
 & \mathclap{\mbox{\quad \small $\tau R^i$ ($i\geq 1$)}} & & \\[6pt]
\mbox{\tiny $y$} & \mbox{\tiny $y$} & & \mbox{\tiny $x$} \\[2pt]
 & \circledcirc & \sequal & \bigbullet \\
 & \bigbullet & & \bigbullet \\[-5pt]
 & \vdots & & \vdots \\
 & \bigbullet & & \bigbullet \\
\smallcircledast & \bigbullet & & \bigbullet \\
\bigbullet & \bigbullet & & \bigbullet \\[-5pt]
\vdots & \vdots & & \vdots
\end{smallmatrix}
\qquad \quad
\begin{smallmatrix}
\mathclap{\mbox{\small $\tau X^1$}} & \\[6pt]
\mbox{\tiny $x$} & \\[2pt]
\smallodot \\
\bigbullet \\
\bigbullet \\
\bigbullet \\
\bigbullet \\[-5pt]
\vdots
\end{smallmatrix}
\qquad
\begin{smallmatrix}
 & & \mathclap{\mbox{\quad \small $\tau X^i$ ($i\geq 2$)}} & & & \\[6pt]
\mbox{\tiny $x$} & & \mbox{\tiny $y$} & \mbox{\tiny $y$} & & \mbox{\tiny $x$} \\[2pt]
 & & & \circledcirc & \sequal & \bigbullet \\
 & & & \bigbullet & & \bigbullet \\[-5pt]
 & & & \vdots & & \vdots \\
 & & & \bigbullet & & \bigbullet \\
\bigbullet & \sequal & \bigcirc & \bigbullet & & \bigbullet \\
\bigbullet & & \bigbullet & \bigbullet & & \bigbullet \\[-5pt]
\vdots & & \vdots & \vdots & & \vdots
\end{smallmatrix} \\[10pt]
\mbox{\footnotesize $k$-basis of $\Z$-graded modules for type $D_\infty$}
\end{gathered}
\]

The symbols $\circ$, $\circledast$, $\odot$, and $\circledcirc$ mean $1$, $x$, $y$, and $x^{1-i}y$ in the bases, respectively. In a column labelled by $x$ (respectively, $y$), each black circle $\bullet$ means the symbol in the previous row multiplied by $y$ (respectively, $x$). The standard silting object is

\begin{align*}
V=& (\bigoplus_{j=1}^m \tau R^{1, j})\oplus(\bigoplus_{i=2}^{\lceil \frac{l}{m}\rceil-1}\bigoplus_{j=1}^m \tau R^{i, j})\oplus(\bigoplus_{j=1}^{l+m-\lceil \frac{l}{m}\rceil m}\tau R^{\lceil \frac{l}{m}\rceil, j}) \\
 & \oplus(\bigoplus_{j=1}^l \tau X^{1, j})\oplus(\bigoplus_{j=1}^m X^{\lceil \frac{l+j}{m}\rceil +1, l-m+j-\lceil \frac{l+j}{m}\rceil m}) \: .
\end{align*}

\appendix

\section{Dg resolutions of self-injective Nakayama algebras} \label{appendix:dg resolutions of self-injective Nakayama algebras}

\subsection{$A_\infty$-algebras} \label{ss:Ainfty-algebras}

Following Section~3.1 of \cite{Keller01}, an {\it $A_\infty$-algebra} is a $\Z$-graded $k$-vector space $A$ endowed with $k$-linear maps $m_n\colon A^{\ten_k n}\to A$ of degree $2-n$, $n\geq 1$, satisfying
\[
\sum_{r+s+t=n}(-1)^{r+st}m_{r+1+t}\circ (\id^{\ten r}\ten m_s \ten \id^{\ten t})=0
\]
for all positive integers $n$, where $r$ and $t$ run through the non-negative integers and $s$ through the positive integers. If $m_n$ vanishes for all $n>2$, then $d=m_1$ and $m_2$ make
$A$ into a dg algebra.

\subsection{Cofibrant dg replacements of self-injective Nakayama algebras} \label{ss:cofibrant dg replacements of self-injective Nakayama algebras}

\begin{proposition} \label{prop:cofibrant dg replacement of Nakayama algebras}
Let $n\geq 1$ and $n_x>1$ be integers. Let $C_n$ be an $n$-cycle whose arrows are labelled by $x$ and $N_{n, n_x}=kQ/(x^{n_x})$ the associated self-injective Nakayama algebra. Then $N_{n, n_x}$ is quasi-isomorphic to the cofibrant dg algebra $(kQ, d)$ defined in Definition~\ref{def:dg path algebras}.
\end{proposition}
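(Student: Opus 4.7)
The plan is to define an explicit morphism
\[
\pi \colon (kQ, d) \longrightarrow N_{n, n_x}
\]
of dg algebras (with $N_{n,n_x}$ carrying the zero differential) and to prove it is a quasi-isomorphism. Since $\gcd(m,n) = 1$, multiplication by $m$ is a bijection of $\Z/n\Z$, so the degree-zero subquiver of $Q$ with arrows $\beta_{1,i} \colon i \to i+m$ is just a relabelling of $C_n$; define $\pi$ by sending each $\beta_{1,i}$ to the corresponding arrow of $C_n$ and $\beta_{p,i} \mapsto 0$ for every $p \geq 2$. First I would observe that $(kQ, d)$ is cofibrant: ordering the generators $\beta_{p,i}$ by $p$, the formula for $d\beta_{p,i}$ involves only generators $\beta_{p',j}$ with $p' < p$, so $(kQ, d)$ is semi-free over the semisimple base $S = kQ_0$ and hence cofibrant.

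Next I would verify that $\pi$ commutes with differentials, i.e.\ that $\pi(d\beta_{p,i}) = 0$ for each generator. For $p = 1$ the differential vanishes. For $p = 2$ the first sum is empty (since $p_1 = p_2 = 1$ violates $2 \mid p_1 p_2$) and the second sum's unique monomial, $\pm\beta_{1,(n_x-1)m+i}\cdots\beta_{1,i}$, maps under $\pi$ to $\pm x^{n_x}$, which vanishes in $N_{n,n_x}$. For $p \geq 3$ every monomial of $d\beta_{p,i}$ carries at least one factor $\beta_{p',j}$ with $p' \geq 2$: the even one in the first sum, and in the second sum (non-empty only for $p$ even), some $p_j \geq 3$ since odd $p_j$'s summing to $p + n_x - 2 \geq n_x + 2$ force at least one to exceed $1$.

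For the quasi-isomorphism itself I would view $(kQ,d)$ as a Tate-style semi-free resolution of $N_{n,n_x}$ built inductively on $p$. The subalgebra $kQ^{(0)}$ generated by the $\beta_{1,\bullet}$ is $kC_n$, with $H^0 = kC_n$; adjoining $\beta_{2,i}$ in degree $-1$ with $d\beta_{2,i} = \pm x^{n_x}|_i$ kills the relations to yield $H^0 = N_{n,n_x}$ while creating $(-1)$-cycles such as $\beta_{2,\nu_1(i)}\beta_{1,i} \pm \beta_{1,\nu_2(i)}\beta_{2,i}$; adjoining $\beta_{3,i}$ in degree $-2$ with the prescribed differential kills these cycles; and so on. To formalise this I would filter $(kQ,d)$ by the number of factors $\beta_{p,\bullet}$ with $p \geq 2$ in each monomial and run the associated spectral sequence. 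Its $E^1$-page can be identified with a bar-type complex computing $\Tor^{N_{n,n_x}}_\bullet(S,S)$; since $N_{n,n_x}$ is a self-injective Nakayama algebra, these $\Tor$-groups are one-dimensional at each vertex of $\Z/n\Z$ in each homological degree---exactly matching the count of generators of $(kQ,d)$. The spectral sequence therefore collapses, yielding $H^{<0}(kQ,d) = 0$ and $H^0(kQ,d) = N_{n,n_x}$, with $\pi$ realising the degree-zero isomorphism.

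The main obstacle is the combinatorial sign bookkeeping: verifying $d^2 = 0$ with the precise signs specified in Definition~\ref{def:dg path algebras} (a prerequisite for $(kQ,d)$ to be a dg algebra at all), and then confirming the $E^1$-degeneration of the filtration spectral sequence. Conceptually, the formula for $d$ is dictated by the $A_\infty$-relations for the minimal $A_\infty$-structure on $S$ with $m_2$ the cyclic composition and $m_{n_x}$ encoding $x^{n_x} = 0$, all higher $m_k$ vanishing; both identities are then instances of the standard cobar / Koszul-duality yoga that produces a cofibrant dg model of $N_{n,n_x}$ from this minimal $A_\infty$-algebra on its Ext-algebra.
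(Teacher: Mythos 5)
Your proposal takes a genuinely different, more hands-on route than the paper, and parts of it are correct, but the central quasi-isomorphism step is not actually established.

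What you get right: the map $\pi$ sending $\beta_{1,i}$ to the corresponding arrow of $C_n$ and $\beta_{p,i}\mapsto 0$ for $p\ge 2$ is a well-defined morphism of dg algebras, and your case check ($p=1,2,\ge 3$) is sound. Likewise, the observation that $d\beta_{p,i}$ involves only generators $\beta_{p',j}$ with $p'<p$ does give semi-freeness over $kQ_0$, hence cofibrancy.

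The genuine gap is the spectral-sequence argument. Filtering by the number of factors $\beta_{p,\bullet}$ with $p\ge 2$, you assert that the $E^1$-page ``can be identified with a bar-type complex computing $\Tor^{N_{n,n_x}}_\bullet(S,S)$'' and then that a dimension count forces collapse. Neither claim is justified. Identifying $E^1$ with a $\Tor$-computing complex requires a specific comparison map and a proof that it is a quasi-isomorphism, which is itself essentially the statement you are trying to prove. And ``the generators match the dimensions of the $\Tor$-groups'' is not a proof of degeneration: a spectral sequence can have higher differentials even when the $E^1$-page has the same total dimension as the expected abutment. Nor is $d^2=0$ for the differential of Definition~\ref{def:dg path algebras} verified; you flag this as an obstacle but do not resolve it. What you correctly identify in your last paragraph --- that the differential is the cobar differential on the Koszul dual of the minimal $A_\infty$-structure on $\Ext^*_{N_{n,n_x}}(S,S)$, so that $d^2=0$ and the quasi-isomorphism both follow --- is precisely what closes these gaps, and it is exactly the route the paper takes: it invokes Theorem~4.8 of Tamaroff's \emph{Minimal models for monomial algebras} for the quasi-isomorphism $N_{n,n_x}\simeq \mathrm{T}\bigl(\bigoplus_{p\ge 1}D\Ext^p_{N_{n,n_x}}(S,S)(-1)\bigr)$, and Theorem~2.2 of Dotsenko--Vallette (or Theorem~6.5 of He--Lu) for the explicit $A_\infty$-operations $m_2$ and $m_{n_x}$, then reads off that the induced cobar differential is the one in Definition~\ref{def:dg path algebras}. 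Your plan would amount to re-proving Tamaroff's theorem in this special case; to make it a proof you would have to carry out the $E^1$-identification and the degeneration argument in full, which is substantially more work than the paper's citation-based derivation.
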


\begin{proof}
Assume that the set of the vertices of $C_n$ is $\Z/n\Z$ and arrows are from $i$ to $i+m$ for an integer $m\leq n$ such that $m$ and $n$ are coprime. Denote the direct sum $\bigoplus_{i\in \Z/n\Z} S_i$ of simple $N_{n, n_x}\op$-modules by $S$. By Theorem~4.8 of \cite{Tamaroff21}, the self-injective Nakayama algebra $N_{n, n_x}$ is quasi-isomorphic to the dg tensor algebra
\[
A=\mathrm{T}(\bigoplus_{p=1}^{\infty}D\Ext_{N_{n, n_x}}^p(S, S)(-1)) \: .
\]
Define the permutations $\nu_p$ on $\Z/n\Z$ as in Definition~\ref{def:dg path algebras} for all non-negative integers $p$. By Theorem~2.2 of \cite{DotsenkoVallette13}, \cf~also Theorem~6.5 of \cite{HeLu05}, the $A_{\infty}$-algebra $\Ext_{N_{n, n_x}}^*(S, S)$ has a $k$-basis
\[
(\alpha_{p, i}\in \Ext_{N_{n, n_x}}^p (S_{\nu_p (i)}, S_i))
\]
such that the $A_{\infty}$-structure is determined by
\begin{align*}
m_2(\alpha_{p_1, i}, \alpha_{p_2, \nu_{p_1}(i)}) & =\alpha_{p_1 +p_2 , i}\ko \mbox{if either }p_1\mbox{ or }p_2\mbox{ is even}, \\
m_{n_x}(\alpha_{p_1, i}, \ldots, \alpha_{p_{n_x}, (\nu_{p_{n_x-1}} \circ \cdots \circ \nu_{p_1})(i)}) & =\alpha_{p_1+\cdots +p_{n_x}+2-n_x, i}\ko \mbox{if all }p_j\mbox{ are odd}, \\
\mbox{other }A_{\infty} & \mbox{-multiplications vanish}.
\end{align*}
The differential of the dg tensor algebra $A$ is determined by its restriction to
\[
\bigoplus_{p=1}^{\infty}D\Ext_{N_{n, n_x}}^p(S, S)(-1) \: .
\]
It is given by
\[
(\si^{-1}\ten \si^{-1})\circ Dm_2 \circ \si +(\si^{-1}\ten \cdots \ten \si^{-1})\circ Dm_{n_x} \circ \si \: ,
\]
where $\si \colon V \to V(1)$ denotes the canonical map of degree $-1$. If we choose
\[
(\beta_{p, i}\in D\Ext_{N_{n, n_x}}^p (S_{\nu_p (i)}, S_i)(-1))
\]
as the $k$-basis $(-1)$-shifted dual to $(\alpha_{p, i}\in \Ext_{N_{n, n_x}}^p (S_{\nu_p (i)}, S_i))$, then the differentials of $\beta_{p, i}$ coincide with those in Definition~\ref{def:dg path algebras}. This concludes the proof.
\end{proof}

%\bibliographystyle{amsplain}
%\bibliography{stanKeller}

\begin{thebibliography}{10}

\bibitem{Araya99}
Tokuji Araya, \emph{Exceptional sequences over graded {C}ohen-{M}acaulay
  rings}, Math. J. Okayama Univ. \textbf{41} (1999), 81--102.

\bibitem{AssemSimsonSkowronski06}
Ibrahim Assem, Daniel Simson, and Andrzej Skowro\'nski, \emph{Elements of the
  representation theory of associative algebras. {V}ol. 1}, London Mathematical
  Society Student Texts, vol.~65, Cambridge University Press, Cambridge, 2006,
  Techniques of representation theory.

\bibitem{Auslander78}
Maurice Auslander, \emph{Functors and morphisms determined by objects},
  Representation theory of algebras ({P}roc. {C}onf., {T}emple {U}niv.,
  {P}hiladelphia, {P}a., 1976), Lect. Notes Pure Appl. Math., vol. Vol. 37,
  Dekker, New York-Basel, 1978, pp.~1--244.

\bibitem{Buchweitz21}
Ragnar-Olaf Buchweitz, \emph{Maximal {C}ohen-{M}acaulay modules and {T}ate
  cohomology}, Mathematical Surveys and Monographs, vol. 262, American
  Mathematical Society, Providence, RI, [2021] \copyright 2021, With appendices
  and an introduction by Luchezar L. Avramov, Benjamin Briggs, Srikanth B.
  Iyengar and Janina C. Letz.

\bibitem{BuchweitzIyamaYamaura20}
Ragnar-Olaf Buchweitz, Osamu Iyama, and Kota Yamaura, \emph{Tilting theory for
  {G}orenstein rings in dimension one}, Forum Math. Sigma \textbf{8} (2020),
  Paper No. e36, 37.

\bibitem{CurtisReiner81}
Charles~W. Curtis and Irving Reiner, \emph{Methods of representation theory.
  {V}ol. {I}}, Pure and Applied Mathematics, John Wiley \& Sons, Inc., New
  York, 1981, With applications to finite groups and orders, A
  Wiley-Interscience Publication.

\bibitem{DieterichWiedemann86}
Ernst Dieterich and Alfred Wiedemann, \emph{The {A}uslander-{R}eiten quiver of
  a simple curve singularity}, Trans. Amer. Math. Soc. \textbf{294} (1986),
  no.~2, 455--475.

\bibitem{DotsenkoVallette13}
Vladimir Dotsenko and Bruno Vallette, \emph{Higher {K}oszul duality for
  associative algebras}, Glasg. Math. J. \textbf{55} (2013), no.~A, 55--74.

\bibitem{Eisenbud80}
David Eisenbud, \emph{Homological algebra on a complete intersection, with an
  application to group representations}, Trans. Amer. Math. Soc. \textbf{260}
  (1980), no.~1, 35--64.

\bibitem{FutakiUeda11}
Masahiro Futaki and Kazushi Ueda, \emph{Homological mirror symmetry for
  {B}rieskorn-{P}ham singularities}, Selecta Math. (N.S.) \textbf{17} (2011),
  no.~2, 435--452.

\bibitem{GeigleLenzing91}
Werner Geigle and Helmut Lenzing, \emph{Perpendicular categories with
  applications to representations and sheaves}, J. Algebra \textbf{144} (1991),
  no.~2, 273--343.

\bibitem{HaniharaIyama22}
Norihiro Hanihara and Osamu Iyama, \emph{Enhanced {A}uslander-{R}eiten duality
  and {M}orita theorem for singularity categories}, arXiv:2209.14090 [math.RT].

\bibitem{Happel88}
Dieter Happel, \emph{Triangulated categories in the representation theory of
  finite-dimensional algebras}, London Mathematical Society Lecture Note
  Series, vol. 119, Cambridge University Press, Cambridge, 1988.

\bibitem{HeLu05}
Ji-Wei He and Di-Ming Lu, \emph{Higher {K}oszul algebras and {$A$}-infinity
  algebras}, J. Algebra \textbf{293} (2005), no.~2, 335--362.

\bibitem{HerschendIyamaMinamotoOppermann23}
Martin Herschend, Osamu Iyama, Hiroyuki Minamoto, and Steffen Oppermann,
  \emph{Representation theory of {G}eigle-{L}enzing complete intersections},
  Mem. Amer. Math. Soc. \textbf{285} (2023), no.~1412, vii+141.

\bibitem{HijikataNishida97}
Hiroaki Hijikata and Kenji Nishida, \emph{Primary orders of finite
  representation type}, J. Algebra \textbf{192} (1997), no.~2, 592--640.

\bibitem{HiranoOuchi23}
Yuki Hirano and Genki Ouchi, \emph{Derived factorization categories of
  non-{T}hom-{S}ebastiani-type sums of potentials}, Proc. Lond. Math. Soc. (3)
  \textbf{126} (2023), no.~1, 1--75.

\bibitem{Iyama18}
Osamu Iyama, \emph{Tilting {C}ohen-{M}acaulay representations}, Proceedings of
  the {I}nternational {C}ongress of {M}athematicians---{R}io de {J}aneiro 2018.
  {V}ol. {II}. {I}nvited lectures, World Sci. Publ., Hackensack, NJ, 2018,
  pp.~125--162.

\bibitem{IyamaKimuraUeyama24}
Osamu Iyama, Yuta Kimura, and Kenta Ueyama, \emph{Cohen-{M}acaulay
  representations of {A}rtin-{S}chelter {G}orenstein algebras of dimension
  one}, arXiv:2404.05925 [math.RT].

\bibitem{Jin20}
Haibo Jin, \emph{Cohen-{M}acaulay differential graded modules and negative
  {C}alabi-{Y}au configurations}, Adv. Math. \textbf{374} (2020), 107338, 59.

\bibitem{KajiuraSaitoTakahashi07}
Hiroshige Kajiura, Kyoji Saito, and Atsushi Takahashi, \emph{Matrix
  factorization and representations of quivers. {II}. {T}ype {$ADE$} case},
  Adv. Math. \textbf{211} (2007), no.~1, 327--362.

\bibitem{KajiuraSaitoTakahashi09}
\bysame, \emph{Triangulated categories of matrix factorizations for regular
  systems of weights with {$\epsilon=-1$}}, Adv. Math. \textbf{220} (2009),
  no.~5, 1602--1654.

\bibitem{Keller94}
Bernhard Keller, \emph{Deriving {DG} categories}, Ann. Sci. \'Ecole Norm. Sup.
  (4) \textbf{27} (1994), no.~1, 63--102.

\bibitem{Keller01}
\bysame, \emph{Introduction to {$A$}-infinity algebras and modules}, Homology
  Homotopy Appl. \textbf{3} (2001), no.~1, 1--35.

\bibitem{Keller08d}
\bysame, \emph{Calabi-{Y}au triangulated categories}, Trends in representation
  theory of algebras and related topics, EMS Ser. Congr. Rep., Eur. Math. Soc.,
  Z\"urich, 2008, pp.~467--489.

\bibitem{KimuraMinamotoYamaura25}
Yuta Kimura, Hiroyuki Minamoto, and Kota Yamaura, \emph{Tilting theory for
  finite dimensional 1-{I}wanaga-{G}orenstein algebras}, J. Algebra
  \textbf{663} (2025), 259--288.

\bibitem{KussinLenzingMeltzer13}
Dirk Kussin, Helmut Lenzing, and Hagen Meltzer, \emph{Triangle singularities,
  {ADE}-chains, and weighted projective lines}, Adv. Math. \textbf{237} (2013),
  194--251.

\bibitem{LeuschkeWiegand12}
Graham~J. Leuschke and Roger Wiegand, \emph{Cohen-{M}acaulay representations},
  Mathematical Surveys and Monographs, vol. 181, American Mathematical Society,
  Providence, RI, 2012.

\bibitem{LuZhu21}
Ming Lu and Bin Zhu, \emph{Singularity categories of {G}orenstein monomial
  algebras}, J. Pure Appl. Algebra \textbf{225} (2021), no.~8, Paper No.
  106651, 39.

\bibitem{MoriUeyama16}
Izuru Mori and Kenta Ueyama, \emph{Stable categories of graded maximal
  {C}ohen-{M}acaulay modules over noncommutative quotient singularities}, Adv.
  Math. \textbf{297} (2016), 54--92.

\bibitem{Orlov04}
D.~O. Orlov, \emph{Triangulated categories of singularities and {D}-branes in
  {L}andau-{G}inzburg models}, Tr. Mat. Inst. Steklova \textbf{246} (2004),
  240--262.

\bibitem{Riedtmann80}
C.~Riedtmann, \emph{Algebren, {D}arstellungsk\"ocher, \"uberlagerungen und
  zur\"uck}, Comment. Math. Helv. \textbf{55} (1980), no.~2, 199--224.

\bibitem{Simson92}
Daniel Simson, \emph{Linear representations of partially ordered sets and
  vector space categories}, Algebra, Logic and Applications, vol.~4, Gordon and
  Breach Science Publishers, Montreux, 1992.

\bibitem{Tamaroff21}
Pedro Tamaroff, \emph{Minimal models for monomial algebras}, Homology Homotopy
  Appl. \textbf{23} (2021), no.~1, 341--366.

\bibitem{Yoshino90}
Yuji Yoshino, \emph{Cohen-{M}acaulay modules over {C}ohen-{M}acaulay rings},
  London Mathematical Society Lecture Note Series, vol. 146, Cambridge
  University Press, Cambridge, 1990.

\end{thebibliography}

%\end{document}

\def\cprime{$'$} \def\cprime{$'$}
\providecommand{\bysame}{\leavevmode\hbox to3em{\hrulefill}\thinspace}
\providecommand{\MR}{\relax\ifhmode\unskip\space\fi MR }
% \MRhref is called by the amsart/book/proc definition of \MR.
\providecommand{\MRhref}[2]{%
  \href{http://www.ams.org/mathscinet-getitem?mr=#1}{#2}
}
\providecommand{\href}[2]{#2}

\end{document}